\def\r{\mathbb{R}}
\def\n{\mathbb{N}}
\def\qnn{\mathbf{q}}
\def\xnn{\mathbf{x}}
\def\ynn{\mathbf{y}}
\def\wnn{\mathbf{w}}
\def\enn{\mathbf{e}}
\def\trian{\mathfrak{T}}
\def\dt{\Delta t}
\def\bphi{\overline{\phi}}
\def\dim{\texttt{d}}
\def\lstab{\mathcal{L}_{\text{coup}}}
\def\llin{\mathcal{L}_{\text{lin}}}
\def\aef{\varmathbb{A}}
\def\kef{\varmathbb{K}}
\def\ttime{\mathrm{T}}
\def\men{\!\scriptscriptstyle -}
\def\mas{\!\scriptscriptstyle +}
\def\scomp{\texttt{s}}
\def\rcomp{\texttt{r}}
\newtheorem{proposition}{Proposition}
\newtheorem{theorem}{Theorem}
\newtheorem{lemma}{Lemma}
\newtheorem*{remark}{Remark}
\newtheorem{assumption}{Assumption}
\newtheorem{definition}{Definition}
\newcommand{\leqnomode}{\tagsleft@true\let\veqno\@@leqno}
\newcommand{\reqnomode}{\tagsleft@false\let\veqno\@@eqno}
\newcommand*\patchAmsMathEnvironmentForLineno[1]{%
	\expandafter\let\csname old#1\expandafter\endcsname\csname #1\endcsname
	\expandafter\let\csname oldend#1\expandafter\endcsname\csname end#1\endcsname
	\renewenvironment{#1}{\linenomath\csname old#1\endcsname}{\csname oldend#1\endcsname\endlinenomath}}%
\newcommand*\patchBothAmsMathEnvironmentsForLineno[1]{%
	\patchAmsMathEnvironmentForLineno{#1}%
	\patchAmsMathEnvironmentForLineno{#1*}}%
\def\ps@pprintTitle{%
	\let\@oddhead\@empty
	\let\@evenhead\@empty
	\def\@oddfoot{}%
	\let\@evenfoot\@oddfoot}
\begin{document}

\begin{frontmatter}

\title{An adaptive multi-scale iterative scheme for a phase-field model for precipitation and dissolution in porous media}

\author[Hass]{Manuela Bastidas\corref{mycorrespondingauthor}}
\cortext[mycorrespondingauthor]{Corresponding author}
\ead{manuela.bastidas@uhasselt.be}

\author[Stut]{Carina Bringedal}
\author[Hass]{Iuliu Sorin Pop}

\address[Hass]{Faculty of Sciences, UHasselt - Hasselt University. Diepenbeek, Belgium.}
\address[Stut]{Institute for Modelling Hydraulic and Environmental Systems, University of Stuttgart. Stuttgart, Germany.}

\begin{abstract}
	Mineral precipitation and dissolution processes in a porous medium can alter the structure of the medium at the scale of pores. Such changes make numerical simulations a challenging task as the geometry of the pores changes in time in an apriori unknown manner. To deal with such aspects, we here adopt a two-scale phase-field model, and propose a robust scheme for the numerical approximation of the solution. The scheme takes into account both the scale separation in the model, as well as the non-linear character of the model. After proving the convergence of the scheme, an adaptive two-scale strategy is incorporated, which improves the efficiency of the simulations. Numerical tests are presented, showing the efficiency and accuracy of the scheme in the presence of anisotropies and heterogeneities.
\end{abstract}

\begin{keyword}
	Phase-field model \sep homogenization \sep multi-scale methods \sep iterative schemes \sep adaptive strategy
	\MSC[2020] 65M12 \sep 65M50 \sep 65M55  \sep 65M60  
\end{keyword}

\end{frontmatter}


\section{Introduction}
\label{intro}

Processes involving precipitation and dissolution in porous media are encountered in many real-life applications. Notable examples in this sense appear in environmental engineering (the management of freshwater in the subsurface), geothermal energy, and agriculture (soil salinization). Particularly challenging for the mathematical modeling and numerical simulations are the situations when the chemistry is affecting the micro structure of the medium, in the sense that the pore geometry and even morphology is altered by dissolution or precipitation. In other words, at the scale of pores (from now on the micro scale), the geometry changes due to chemistry and this also impacts the averaged model behavior at the Darcy-scale (from now on the macro scale).

Mathematical models for dissolution and precipitation in porous media have been extensively discussed in the past decades. In this sense, we mention the model proposed in  \cite{knabner1995analysis}, in which the possibility of having an under- or oversaturated regime is expressed in rigorous mathematical terms. Various mathematical aspects for such models, like the existence and uniqueness of a (weak) solution, the rigorous derivation of the macro-scale model from a micro-scale one, the numerical approximation, or qualitative properties like traveling waves are studied in \cite{knabner1995analysis, moszkowicz1996diffusion, bouillard2007diffusion, kumar2014convergence, agosti2015analysis, kumar2016homogenization,  hoffmann2017existence}. The models discussed there do not take explicitly into account any evolution of the micro-scale geometry. In those cases one can work with the mineral as a surface concentration and the micro-scale volumetric changes in the mineral phase are neglected (see \cite{van2009FREE, kumar2011effective}). At the macro scale, this implies that the porosity does not depend on the solute concentration. An exception is the macro-scale model proposed in \cite{agosti2015analysis}, including an equation relating the changes in the porosity to the (macro-scale) concentration of the mineral. 

Whenever the changes in the mineral layer thickness are large compared to the typical micro-scale length (the size of pores), the micro-scale changes in porosity and morphology cannot be neglected. This impacts the flow at the micro scale, and implicitly the averaged macro-scale quantities which are of primary interest for real-life applications. In this context, upscaling is a natural way to derive macro-scale models incorporating the micro-scale processes accurately. We recall that, due to the chemical processes mentioned above, the structure of the pores (the micro structure) is changing in time, depending on the concentration of the dissolved components, which is a model unknown. In other words, one deals with free boundaries appearing at the micro scale. The challenges related to such models are two-fold; on the one hand, related to the free boundaries, and on the other hand, to the fact that these appear at the micro scale. 

Several approaches are available to account for the evolution of the pore-scale geometry. In one spatial dimension, a free boundary model for dissolution and precipitation in porous media is proposed in \cite{van2008stefan}. There, the existence and uniqueness of a solution are proved. For closely related results, we mention \cite{muntean2009moving, Kota}, where the existence of solutions for similar, one-dimensional free-boundary problems is proved. For the multi-dimensional case, we mention \cite{kumar2013reactive, mabuza2014conservative, mabuza2014nonlinear} where mathematical models for reactive transport models in moving domains are proposed. Similarly, in \cite{mabuza2016modeling} the existence of a solution for a model describing reactive solute transport in deformable two-dimensional channels with adsorption-desorption at the walls is proved, relying on the techniques in \cite{muha2013}.

Whereas the one-dimensional case is quite direct; there are various ways to deal with the (freely) moving boundaries in multiple spatial dimensions. When dealing with simple geometries, like a radially symmetric channel, a layer thickness function can be defined to locate the free boundary. This approach is adopted in \cite{van2009FREE, kumar2011effective, bringedal2015model}. For more complex situations, a level set approach can be considered, as done in  \cite{van2009crystal2, schulz2017strong, schulz2019beyond, bringedal2016upscaling}. Upscaled models can be derived in both cases. For simple geometries, transversal averaging is sufficient, leading to an upscaled model in which the layer thickness is related to the changes in porosity and permeability. For more complex situations, one can apply homogenization techniques. In this case, the upscaled model components and parameters are determined by solving micro-scale (cell) problems involving moving interfaces.

A third option, which inspired the present work, is the phase-field approach. In this case, a thin, diffuse interface layer approximates the freely moving interfaces separating the fluid from the mineral (the precipitate). Building on the idea of minimizing the free energy (see e.g. \cite{caginalp1988dynamics}) the phase-field indicator $\phi$ is an approximation of the characteristic function that approaches $1$ in the fluid phase and $0$ in the mineral phase. In between, a smooth transition zone of width $\lambda>0$ is encountered (see e.g. \cite{ratz2016diffuse}). This approach was considered in \cite{van2011phase} for describing the dissolution and precipitation processes as encountered at the micro scale. There, two phases are encountered (the mineral and the solvent), both being immobile; the solute concentration changes due to chemistry (precipitation and dissolution) and diffusion. An extension to two fluid phases and the mineral is proposed in \cite{redeker2016upscaling}. There, the Darcy-scale counterpart is derived by homogenization techniques but still for the case without fluid motion. The model in \cite{van2011phase} is further extended in \cite{bringedal2019phase} to incorporate fluid flow at the micro scale, and where a Darcy-scale counterpart is derived. In this context, we also mention \cite{redeker2016pod} where model order reduction techniques are employed to build an efficient multi-scale algorithm applicable to the phase-field model proposed in \cite{redeker2016upscaling}. 

Here we focus on the two-scale model in \cite{bringedal2019phase}, in which the so-called cell problems defined at the micro scale are solved for determining the effective parameters appearing in the macro-scale equations modeling the flow and the chemical processes. In other words, we compute effective parameters such as the effective diffusion and the effective permeability tensors to resolve the homogenized problem. These macro-scale quantities are found through local micro-scale problems that depend on the evolution of the phase field at the micro scale. 
The main goal of this paper is to develop a robust multi-scale iterative scheme accounting for both the scale separation and the non-linearities in the model. Although motivated by \cite{bringedal2019phase}, this approach can be applied to other two-scale models resulting from homogenization. Unlike classical multi-scale schemes, e.g., \cite{efendiev2009multiscale}, where one has the same type of equations at both the macro and micro scales, the scheme proposed here allows for different equations at the micro and the macro scale. This approach is hence in line with the heterogeneous multi-scale methods in \cite{engquist2007heterogeneous}. In the present context, we mention the similarities with \cite{garttnernumerical, ray2019numerical}, where a multi-scale scheme is developed for reactive flow and transport in porous media where a level-set is employed to track the evolution of the solid-fluid interface at the micro scale. 

The scheme proposed here is a multi-scale iterative one and relies on the backward Euler (BE) method for the time discretization. The general ideas are presented in \cite{Manuela_proced}. Inspired by \cite{brun2019iterative}, an artificial term is included in the (micro-scale) phase-field equation. This parameter has a stabilizing effect in the coupling with the (macro-scale) flow and reactive transport equations. We mention that, compared to \cite{brun2019iterative}, this coupling is bridging here two different scales. In a simplified setting, we give the rigorous convergence proof of the scheme. This result is obtained without specifying any particular spatial discretization.

To guarantee mass conservation, the mixed finite element method (MFEM) is employed for the spatial discretization at both scales. Since effective quantities are needed for each macro-scale element, the finer the macro-scale mesh is, the more micro-scale problems have to be solved numerically. This increases the computational effort significantly. To deal with this aspect, a macro-scale adaptive strategy is included,  inspired by \cite{redeker2013fast}. The main idea is to select at each time step a representative fraction of the macro-scale points (so-called active nodes), for which the micro-scale cell problems are solved and the effective quantities updated. The results are then transferred to the remaining (inactive) nodes, which are assigned to an active node based on a similarity criterion. A similar approach was also applied in \cite{redeker2016upscaling,garttnernumerical}.

Adaptivity is further applied at the micro scale, where it is crucial to have an accurate description of the diffuse transition zone. In such regions, a fine mesh is necessary to capture the phase-field changes at every time. On the other hand, away from such transition zones, in both the mineral and the fluid phases, the phase field is barely varying. There a coarser mesh is sufficient to obtain an accurate numerical solution. Therefore we use an adaptive mesh that follows the movement of the phase-field transition zone. We start with a coarse micro-scale mesh and apply a prediction-correction strategy as described in \cite{heister2015primal} for a phase-field model for fracture propagation. Finally, since the micro-scale cell problems for the phase field are non-linear, we use a fixed-point iterative scheme called L-scheme, as described in \cite{pop2004mixed, list2016study}. Incorporating this linearization scheme in the multi-scale iterative one mentioned above can be made with no effort, as they both involve similar stabilization terms. Moreover, this scheme has the advantage of being convergent regardless of the starting point and the spatial discretization (the method itself, and the mesh size). Finally, as much the spatial discretization allows it, the iterative scheme guarantees the lower and upper bounds for the phase field.

This paper is organized as follows. In \Cref{sec:twoScal}, the two-scale geometry and the two-scale model are presented briefly. In \Cref{sec:Itscheme0}, we present the iterative scheme and in \Cref{sec:micro}, we introduce the non-linear solver used on the micro-scale problems. In \Cref{sec:Analysis}, we prove the convergence of the multi-scale iterative scheme. The micro- and macro-scale adaptive strategies are described in \Cref{sec:adaptivity}. Finally, in \Cref{sec:numeric}, two numerical test cases are applied in which we study in detail the effect of different choices of parameters.

\subsection{Notations}
\label{sec:pre}

In this paper we use common notations from the functional analysis. For a general domain $\mathfrak{D} \subset \r^\dim$ with $\dim=2,3$, we denote by $L^p(\mathfrak{D})$ the space of the $p-$integrable real-valued functions equipped with the usual norm and by $H^1(\mathfrak{D})$  the Sobolev space of $L^2(\mathfrak{D})$ functions having weak derivatives in the same space.

We let $\left\langle \cdot, \cdot \right\rangle_{\mathfrak{D}}$ represent the inner product on $L^2(\mathfrak{D})$ and norm $\|v\|^2_{L^2(\mathfrak{D})} = \|v\|^2_{\mathfrak{D}}:= \left\langle v, v \right\rangle_{\mathfrak{D}}$. For defining a solution in a weak sense we use the spaces $H^1_\#(\mathfrak{D}) = \left\lbrace \right. p \in H^1(\mathfrak{D})\, |  \, p \text{ is } \mathfrak{D}  \text{-periodic} \left. \right\rbrace$ and $H^1_0(\mathfrak{D}) = \left\lbrace \right. p \in H^1(\mathfrak{D})\, |  \, p=0 \text{ on } \partial\mathfrak{D} \left. \right\rbrace$, with $H^{-1}_\#(\mathfrak{D})$ and $H^{-1}_0(\mathfrak{D})$  being the corresponding dual spaces. When the functions are defined over two domains $\mathfrak{D}_1\subset \r^\dim$ and $\mathfrak{D}_2\subset \r^\dim$ we use the Bochner spaces $L^p(\mathfrak{D}_1; L^q(\mathfrak{D}_2))$ for $p,q \in [1, \infty)$, with the usual norm. 
In the case $p=q=2$ we denote the corresponding norm $ \| v \|_{\mathfrak{D}_1\times\mathfrak{D}_2} := \| v \|_{L^2(\mathfrak{D}_1; L^2(\mathfrak{D}_2))}^2$. 

We use the positive and negative cut of a real number $v$, defined as $[v]_{\mas} := \max( v, 0)$ and $[v]_{\men} := \min( v, 0)$.

\section{The two-scale model}
\label{sec:twoScal}

As mentioned before, we consider the upscaled phase-field model in \cite{bringedal2019phase}. This model describes single-phase flow and reactive transport through a porous medium where the fluid-solid interface evolves due to mineral precipitation and dissolution. 
The macro-scale domain is $\Omega$. It should be interpreted as a homogenized porous medium in which the micro-scale complexities (e.g., the alternating solid and void parts) are averaged out. Following the homogenization procedure, to each macro-scale point $\xnn \in \Omega$, a micro-scale domain $Y$ is assigned, representing an idealization of the complex structure at the micro scale. These micro-scale domains are used to define the cell problems, yielding the effective parameters and functions required at the macro scale.

Following \cite{bringedal2019phase}, the model considered here has been derived by homogenization techniques. At the micro scale the geometry consists of solid grains surrounded by void space (pores). The precipitation and dissolution processes are encountered on the boundary of already existing mineral (grains) and not in the interior of the void space. We assume that the mineral never dissolves entirely and that the void space is always connected; thus the porosity is never vanishing. We refer to \cite{schulzdegenerate, schulz2020degenerate} for the analysis of models, including vanishing porosity and to \cite{bringedal2017effective} for a comparison of different approaches used in the context near clogging.

We write the model in non-dimensional form by following the non-dimensionalization in \cite{bringedal2019phase}. In doing so, we use a local unit cell $Y=[-0.5, 0.5]^\dim$ and to identify the variations at the micro scale we define a fast variable $\ynn$. We associate one micro-scale cell $Y$ to every macro-scale location $\xnn \in \Omega$ (see \Cref{fig:multi-scalephase}).

\begin{figure}[htpb!]
	\centering
	\includegraphics[width=0.8\textwidth]{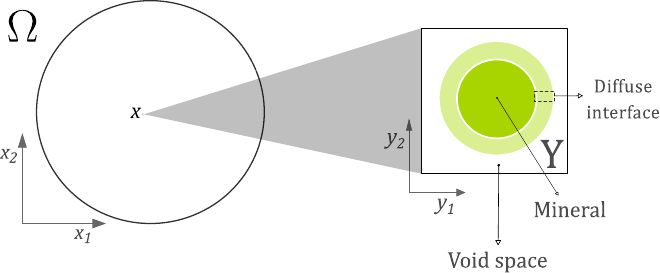}
	\caption{The two-scale domain: the macro scale, homogenized porous medium $\Omega$ (left) and the micro-scale domain $Y$ (right) corresponding to a point $\xnn\in \Omega$.}
	\label{fig:multi-scalephase}
\end{figure}

Following from the upscaling, the unknowns $\qnn(\xnn, t)$, $p(\xnn, t)$ denote the (macro-scale) velocity and pressure in the fluid and $u(\xnn, t)$ is the upscaled solute concentration. At the macro scale the flow is described by
\leqnomode
\begin{equation}\label{eq:macro1}
\tag{\textbf{P}$^\mathrm{M}_p$}
\left\lbrace \begin{aligned}
\nabla\cdot\qnn & =0, & & \text{in } \Omega_{\ttime}:=\Omega \times (0, \ttime], \\
\qnn & = - \kef\nabla p, & & \text{in } \Omega_{\ttime},\\
p &= p_D, & & \text{on } \partial\Omega_{\ttime}:=\partial\Omega \times (0, \ttime], \\
p &= p_I, & & \text{in } \Omega \text{ and }  t=0.
\end{aligned}\right.
\end{equation}
For the solute transport one has
\begin{equation}\label{eq:macro2}
\tag{\textbf{P}$^\mathrm{M}_u$}
\left\lbrace \begin{aligned}
\partial_t(\overline{\phi}(u-u^{\star}))+ \nabla\cdot(\qnn u) &= D\nabla\cdot (\aef\nabla u), & & \text{in } \Omega_{\ttime}, \\
u &= u_D, & & \text{on } \partial\Omega_{\ttime}, \\
u &= u_I, & & \text{in } \Omega \text{ and }  t=0,
\end{aligned}\right.
\end{equation}
where all the spatial derivatives are taken with respect to the macro-scale variable $\xnn$. Here $D$ denotes the pore-scale diffusivity of the solute. In the mineral domain, the mineral is immobile and has a constant concentration $u^{\star}$. Here, $\qnn$ comes from averaging the product of the pore-scale velocity and the phase field $\phi$ over the micro domain $Y$. The variable $\bphi(\xnn, t)$ defines the porosity and the matrices $\aef(\xnn, t)$ and $\kef(\xnn, t)$ are the effective diffusion and permeability, respectively and are determined through local cell problems on $Y$. In other words, the porosity $\bphi$ and the effective parameters $\aef$ and $\kef$ depend on the micro scale in a way that will be explained below.

To derive the macro-scale parameters $\bphi(\xnn, t)$, $\aef(\xnn, t)$ and $\kef(\xnn, t)$, the phase field $\phi(\xnn, \ynn, t)$ is determined for all $\xnn \in \Omega$ by solving the following micro-scale problem
\begin{equation}\label{eq:microphase}
\tag{\textbf{P}$^\mu_{\phi}$}
\left\lbrace \begin{aligned}
\lambda^2 \partial_t \phi + \gamma P'(\phi) & = \gamma \lambda^2 \Delta \phi - 4\lambda\phi(1-\phi)\frac{1}{u^{\star}} f(u), \quad \text{in } Y , \\
\phi \, \, & \text{is } Y\text{-periodic},\\
\phi &= \phi_I, \quad \text{in } Y \text{ and }  t=0,
\end{aligned}\right.
\end{equation}
where all the spatial derivatives are taken with respect to the micro-scale variable $\ynn$. The function $f(u)$ is the reaction rate and $\gamma$ denotes the diffusivity of the interface that separates the fluid and the mineral. Further, $P(\phi)$ denotes the double-well potential, which ensures that the phase field mainly attains values (close to) 0 and 1 for small values of the width of the transition zone $\lambda$. For improving the local conservation of the phase field $\phi$, one may follow \cite{Chen, Carina_proced} and add an additional, $Y$-averaged term in the phase-field equation.

While $\phi$ enters in the micro-scale problems trough the effective parameters defined below, the reverse coupling with the micro scale is given through the reaction rate $f(u)$, with $u$ being constant w.r.t the variable $\ynn \in Y$. The macro-scale porosity in \eqref{eq:macro2} is defined by averaging the phase field
\reqnomode
\begin{equation*}
\bphi(\xnn, t) = \int_Y\phi(\xnn, \ynn, t) d\ynn.
\end{equation*}

In the cell problems, we use a regularized phase field $\phi_\delta := \phi+\delta$ with $\delta>0$ being a small regularization parameter. With this we avoid degeneracies and ensure that the cell problems are well defined. Notice that this regularization parameter only plays a role in the calculation of the effective parameters and does not appear explicitly in \eqref{eq:microphase},\eqref{eq:macro1} and \eqref{eq:macro2}.

The elements of the effective matrices $\aef(\xnn, t)$ and $\kef(\xnn, t)$ are given by
\begin{equation}\label{eq:eff}
\aef_{\rcomp\scomp}(\cdot, t)= \int_{Y} \phi_\delta \left( \delta_{\rcomp\scomp}+\partial_{\rcomp}\omega^\scomp\right) d\ynn \quad \text{and} \quad \kef_{\rcomp\scomp}(\cdot, t)= \int_{Y} \phi_\delta\, \wnn^\scomp_\rcomp d\ynn, 
\end{equation}
for $\rcomp, \scomp\in\{1, \dots, \dim\}$. The functions $\omega^\scomp$ and $\wnn^\scomp=[\wnn^\scomp_1, \dots, \wnn^\scomp_\dim]^t$ solve the following cell problems, defined for each $\xnn\in \Omega$
\leqnomode
\begin{equation}\label{eq:microA}
\tag{\textbf{P}$^{\mu}_{\aef}$}
\left\lbrace
\begin{aligned}
\nabla \cdot (\phi_\delta(\nabla \omega^\scomp + \enn_\scomp)) & = 0, & \text{ in } Y, \\
\omega^\scomp \, \, \text{is } Y\text{-periodic} \quad \text{and }\quad & \int_Y \omega^\scomp d\ynn = 0, 
\end{aligned}\right.
\end{equation}
\begin{equation}\label{eq:microK}
\tag{\textbf{P}$^{\mu}_{\kef}$}
\left\lbrace
\begin{aligned}
\nabla \Pi^\scomp+\enn_\scomp +\mu_f\Delta(\phi_\delta\wnn^\scomp) & =\frac{g(\phi, \lambda)}{\phi_\delta}\wnn^\scomp, & \text{ in } Y, \\
\nabla \cdot (\phi_\delta\wnn^\scomp) & = 0, & \text{ in } Y, \\
\Pi^\scomp \, \, \text{is } Y\text{-periodic} \quad \text{and }\quad & \int_Y \Pi^\scomp d\ynn = 0. \\
\end{aligned}\right.
\end{equation}
Here $\enn_\scomp$ is the $\scomp$-th canonical vector and the function $g(\phi, \lambda)$ ensures that there is zero flow in the mineral phase. This function is such that $g(1, \lambda) = 0$ and $g(0, \lambda)>0$. Here we take $g(\phi, \lambda) := \frac{10K(1-\phi)}{\lambda(\phi+10)}$ with $K=25$ as motivated in \cite{garcke2015numerical}.

Finally, the boundary and initial conditions in \eqref{eq:macro1}, \eqref{eq:macro2} and \eqref{eq:microphase} satisfy the following assumptions
\begin{enumerate}[label={(A\arabic*)}]
	\item The functions $p_D$ and $u_D$ are traces of functions in $H^1(\Omega)$ and $u_D$ is essentially bounded.
	\item The function $u_I\in L^\infty(\Omega)$ is such that $0\leq u_I(\xnn)\leq u^{\star}$ a.e. and the function $\phi_I\in L^\infty(\Omega\times Y)$ is such that $0\leq \phi_I(\xnn,\ynn)\leq 1$ a.e. 
\end{enumerate}
For simplicity, in the following sections we consider homogeneous Dirichlet boundary conditions but the extension to other cases can be done straightforwardly.

\section{The iterative scheme}
\label{sec:Itscheme0}

We propose an iterative scheme to simulate the multi-scale behavior of the phase-field model presented in \Cref{sec:twoScal}. Here we use an artificial coupling parameter between the two scales. In \cite{brun2019iterative, mikelic2013convergence} similar approaches about handling the coupling between scales and non-linear systems of equations can be found.

\subsection{Preliminaries}

For a fixed micro-scale domain $Y$ corresponding to one macro-scale point $\xnn\in \Omega$ the non-linear part of \eqref{eq:microphase}, namely $F:\r\times\r \to \r$, is defined by
\reqnomode
\begin{equation}\label{eq:nonlinearterm}
F(\phi, u) := - \gamma P'(\phi) - 4\lambda\phi(1-\phi)\frac{1}{u^{\star}} f(u).
\end{equation}

Further, we choose the reaction rate $f(u)$ and the double-well potential $P(\phi)$ to be 
\[f(u):= k\left(  \frac{[u]_+^2}{u_{\text{eq}}^2}-1\right) \quad \text{ and } \quad \textbf{} P(\phi):=8\phi^2(1-\phi)^2,\]  
where $u_{\text{eq}}$ is the equilibrium concentration and $k$ is a reaction constant chosen to be $1$. Under this choice and denoting by $\partial_k F$ the partial derivative of $F$ respect to the $k$-th argument, the non-linear term \eqref{eq:nonlinearterm} satisfies the following properties
\begin{enumerate}[label={(F\arabic*)}]
	\item\label{as:a2} For each $u\in \r$, the function $F(\cdot, u)$ is locally Lipschitz continuous with respect to the first argument. There exists a constant $\mathfrak{M}_{F_1} \geq 0$ such that $ |\partial_1 F(s, u) | \leq \mathfrak{M}_{F_1} $ for a.e. $s \in [\alpha, \beta] \subset \r$.
	
	\item\label{as:a3} For each $\phi\in \r$, the function $F(\phi, \cdot)$ is locally Lipschitz continuous with respect to the second argument. There exists a constant $\mathfrak{M}_{F_2} > 0$ such that $|\partial_2 F(\phi, s)| \leq \mathfrak{M}_{F_2}$ for a.e. $s \in [\alpha, \beta] \subset \r$. Moreover, the function $F(\phi, \cdot)$ is such that $F(0,\cdot)= F(1, \cdot)=0$.
	
\end{enumerate}

\begin{proposition}\label{prop:nonlinear}
	For each $u\in \r$, the function $F(\cdot, u)$ is continuous and can be decomposed as $F(\cdot, u) := F_{+}(\cdot, u) + F_{\men}(\cdot, u)$ with $F_{\mas}(\cdot, u)$ denoting the increasing part of $F(\cdot, u)$ and $F_{\men}(\cdot, u)$ the decreasing part of $F(\cdot, u)$
	\begin{align*}
	F_{\mas}(\alpha, u) = \int_0^{\alpha} [ \partial_1 F(z\!, u)]_{\mas} \, dz, \text{ and }
	F_{\men}(\alpha, u) = \int_0^{\alpha} [ \partial_1 F(z\!, u)]_{\men} \, dz.
	\end{align*}
\end{proposition}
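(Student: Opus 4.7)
The plan is to unpack the definitions and apply the fundamental theorem of calculus, using the structural properties (F1)--(F3) assumed on $F$. The statement is essentially a monotone splitting of a Lipschitz function into an increasing and a decreasing component via the positive/negative cuts of its derivative, so no deep argument is needed; the work is just verifying that the ingredients are available.

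First, I would note that continuity of $F(\cdot,u)$ for each fixed $u$ is an immediate consequence of (F1), since a locally Lipschitz function is in particular continuous. Next, (F1) also ensures that $F(\cdot,u)$ is absolutely continuous on any compact interval, so $\partial_1 F(\cdot,u)$ exists almost everywhere and the fundamental theorem of calculus yields
\begin{equation*}
F(\alpha,u) - F(0,u) \;=\; \int_0^{\alpha} \partial_1 F(z,u)\,dz
\end{equation*}
for every $\alpha\in\r$. The hypothesis $F(0,\cdot)=0$ from (F3) eliminates the boundary term, giving $F(\alpha,u) = \int_0^{\alpha}\partial_1 F(z,u)\,dz$. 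Now I would invoke the pointwise identity $v = [v]_{\mas} + [v]_{\men}$ inside the integrand and split the integral to obtain
\begin{equation*}
F(\alpha,u) \;=\; \int_0^{\alpha}[\partial_1 F(z,u)]_{\mas}\,dz \;+\; \int_0^{\alpha}[\partial_1 F(z,u)]_{\men}\,dz \;=\; F_{\mas}(\alpha,u) + F_{\men}(\alpha,u),
\end{equation*}
which is the claimed decomposition. To confirm the monotonicity labels, I would differentiate: $\partial_1 F_{\mas}(\alpha,u) = [\partial_1 F(\alpha,u)]_{\mas} \ge 0$ a.e., so $F_{\mas}(\cdot,u)$ is non-decreasing, and analogously $\partial_1 F_{\men}(\alpha,u) = [\partial_1 F(\alpha,u)]_{\men} \le 0$ a.e., so $F_{\men}(\cdot,u)$ is non-increasing.

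There is no real obstacle here; if anything, the only subtle point is that (F1) delivers $\partial_1 F(\cdot,u)$ only in an almost-everywhere sense with an essential bound, so one should explicitly justify the use of the fundamental theorem of calculus by appealing to absolute continuity of locally Lipschitz functions (or, equivalently, to the fact that $F(\cdot,u)$ coincides with an indefinite Lebesgue integral of $\partial_1 F(\cdot,u)$ up to the constant $F(0,u)=0$). The integrals defining $F_{\mas}$ and $F_{\men}$ are well defined because the integrands are bounded by $\mathfrak{M}_{F_1}$ on any compact interval, again by (F1).
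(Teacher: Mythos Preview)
Your argument is correct and complete; the paper states this proposition without proof, so there is nothing to compare against. One small correction: you cite a property ``(F3)'' for the vanishing $F(0,\cdot)=0$, but the paper only lists \ref{as:a2} and \ref{as:a3}, and the condition $F(0,\cdot)=F(1,\cdot)=0$ is part of \ref{as:a3}, not a separate assumption.
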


In Section \eqref{sec:micro} we propose a micro-scale non-linear solver and there, the splitting of the non-linear term in \Cref{prop:nonlinear} guarantees the convergence. In the following sections we treat $F_{\men}$ implicitly and $F_{\mas}$ explicitly. A similar strategy splitting the non-linearities into their convex and concave components can be found in \cite{frank2018finite}.

\subsection{The multi-scale iterative scheme}
\label{sec:Itscheme}
The multi-scale iterative scheme is inspired by \cite{brun2019iterative}, where a stabilizing term involving the parameter $\lstab > 0$ is added to the micro-scale phase-field equation. In \Cref{sec:Analysis} we show that choosing $\lstab>0$ guarantees the convergence of the scheme.

We let $N\in \n$ be the number of time steps and $\dt=\ttime/N$ be the time step size. For $n\in\{1, \dots, N\}$, we define $t^n=n\dt$ and denote the time-discrete solutions by $\phi^n:= \phi(\cdot, \cdot, t^n)$ and $\nu^n := \nu(\cdot, t^n)$ for $\nu \in \left\lbrace \aef, \kef, p, \qnn, u \right\rbrace$.

Applying the Euler implicit discretization, at each time a fully coupled non-linear system of equations has to be solved. For each $n>0$, the iterative algorithm defines a multi-scale sequence $\left\lbrace \phi^n_{i}, \aef^n_i, \kef^n_i, p^n_i, \qnn^n_i, u^n_i\right\rbrace$ where $i>0$ is the iteration index. Naturally, the initial guesses for $\phi^n_{0}$ and $u^n_0$ are $\phi^{n\!-\!1}$ and $u^{n\!-\!1}$. We call $\phi^{0}$ and $u^0$ the initial conditions of $\phi$ and $u$.

The iterative scheme is defined as follows. First, for a given $u^{n\!-\!1}$, $u^n_{i-1}$, $\bphi^{n\!-\!1}$ and $\phi^n_{i-1}$, one solves the micro-scale phase-field problem
\leqnomode
\begin{equation}\label{eq:microphase_i}
\tag{\textbf{P}$^{\mu, i}_{\phi}$}
\left\lbrace\begin{aligned}
 \phi^n_i - \dt \gamma \Delta \phi^n_i - \frac{\dt}{\lambda^2} F_{\men}(\phi^n_i, u^n_{i\!-\!1}) &+ \lstab \left( \phi^n_i-\phi^n_{i\!-\!1}\right) \\ &= \phi^{n\!-\!1} + \frac{\dt}{\lambda^2} F_{\mas}(\phi^{n\!-\!1}, u^n_{i\!-\!1}), \quad \text{in } Y, \\
 \qquad \phi^n_i \quad &\text{is } Y\text{-periodic},
\end{aligned}\right.
\end{equation}
By using the solution $\phi^n_i$ in \eqref{eq:eff}, \eqref{eq:microA} and \eqref{eq:microK} we calculate the iterative effective parameters $\aef^n_i$ and $\kef^n_i$. Then, one continues with the macro-scale problems
\leqnomode
\begin{equation}\label{eq:macro1_i}
\tag{\textbf{P}$^{\mathrm{M}, i}_{p}$}
\left\lbrace\begin{aligned}
\nabla\cdot\qnn^n_i & =0, & & \text{in } \Omega, \\
\qnn^n_i & = - \kef^n_i\nabla p^n_i, & & \text{in } \Omega, \\
p^n_i &= 0, & & \text{on } \partial\Omega, 
\end{aligned}\right.
\end{equation}
\begin{equation}\label{eq:macro2_i}
\tag{\textbf{P}$^{\mathrm{M}, i}_{u}$}
\left\lbrace\begin{aligned}
\overline{\phi}^n_i(u^n_i-u^{\star}) &+ \dt \nabla\cdot(\qnn^n_iu^n_i) \\ & = \dt D\nabla\cdot (\aef^n_i\nabla u^n_i) + \overline{\phi}^{n\!-\!1}(u^{n\!-\!1}\!-\!u^{\star}), & & \text{ in } \Omega, \\
u^n_i &= 0, & & \text{ on } \partial\Omega.
\end{aligned}\right.
\end{equation}

\paragraph{\textbf{The iterative scheme}}
For $n>0$ and $i > 0$ with given $u^{n\!-\!1}$, $u^n_{i-1}$, $\bphi^{n\!-\!1}$ and $\phi^n_{i-1}$, one performs the following steps
\begin{enumerate}[label={(S\arabic*)}]
	\item\label{step1} For each $\xnn \in \Omega$, find $\phi^n_i$ by solving the phase-field problem \eqref{eq:microphase_i}.

	\item\label{step2} Given $\phi^n_{i}$, find the effective matrices $\aef^n_i$ and $ \kef^n_i$ in \eqref{eq:eff} by solving the cell problems \eqref{eq:microA} and \eqref{eq:microK}.
	
	\item\label{step3} Given $\kef^n_i$ and $\aef^n_i$, find $p^n_i$, $\overline{\qnn^\phi}^n_i$ and $u^n_i$ by solving the macro-scale problems \eqref{eq:macro1_i} and \eqref{eq:macro2_i}.
\end{enumerate}

The multi-scale iterations in steps \ref{step1} - \ref{step3} take place until one reaches a prescribed threshold $\textit{tol}_M>0$ for the following $L^2$-norm
\[\epsilon_M^{n, i}:= \| \bphi^{n}_i - \bphi^{n}_{i\!-1\!} \|_\Omega  \leq \textit{tol}_M. \]
We highlight that this stopping criterion is chosen according to the results in \Cref{te:teorema} in \Cref{sec:Analysis}. Observe that the convergence of the porosity $\bphi^{n}_i$ guarantees the convergence of the macro-scale concentration $u^{n}_i$, so the stopping criterion above is sufficient. However, different stopping criteria can also be used, including e.g., the residuals of the macro-scale concentration and velocity.

Proving the existence and uniqueness of a solution to the coupled system \eqref{eq:macro1}, \eqref{eq:macro2}, \eqref{eq:microphase}, \eqref{eq:microA} and \eqref{eq:microK} is beyond the scope of this paper. Such results are known if each model component is considered apart. For example, when taken individually the problems \eqref{eq:macro1}, \eqref{eq:macro2}, \eqref{eq:microA} and \eqref{eq:microK} are linear and elliptic, while the non-linearity in \eqref{eq:microphase} is monotone and Lipschitz continuous. For such problems the existence and uniqueness of a weak solution is guaranteed by standard arguments. The same holds for \eqref{eq:macro1_i} and \eqref{eq:macro2_i}. For the parabolic counterparts, before applying the time discretization, we refer to \cite{friedman1988combustion, friedman1992transport, muntean2010multiscale, redeker2016upscaling}. There the existence and uniqueness of solutions to similar problems related to phase field modeling or the interaction between scales are addressed.

\section{The micro-scale non-linear solver}
\label{sec:micro}

The multi-scale iterative scheme in steps \ref{step1} - \ref{step3} includes a non-linear problem at the micro scale. At each time and for each $\xnn \in \Omega$, the iterations of the multi-scale scheme require solving the micro-scale non-linear problem \eqref{eq:microphase_i} in the micro-scale domain $Y$.
For this we construct an iterative non-linear solver based on the L-scheme \cite{pop2004mixed, list2016study},  which is a contraction-based approach. The main advantages of the L-scheme are that, unlike the Newton method, this method does not involve the calculation of derivatives and its convergence is guaranteed regardless of the initial approximation, the spatial discretization, or the mesh size.

To be specific, for a fixed $\xnn \in \Omega$ and $n>0$, let $\phi^{n\!-\!1}\in L^2(Y)$ and the concentration $u^n(\xnn)$ be given. The weak solution of the problem \eqref{eq:microphase} is defined as follows
\begin{definition}
	A weak solution to the problem \eqref{eq:microphase} is a function $\phi^{n}\in H_\#^1(Y)$ satisfying
	\reqnomode
	\begin{equation}\label{eq:phasef_w}
	\begin{aligned}
	&\langle \phi^n, \psi \rangle_Y + \dt \gamma \langle \nabla \phi^n, \nabla \psi \rangle_Y - \frac{\dt}{\lambda^2} \langle F_{\men}(\phi^n, u^n), \psi \rangle_Y \\ & \quad = \langle\phi^{n\!-\!1}\! +\! \frac{\dt}{\lambda^2}F_{\mas}(\phi^{n\!-\!1}, u^n), \psi\rangle_Y, 
	\end{aligned}
	\end{equation}
	for all $\psi\in H_\#^1(Y)$.
\end{definition}
Further, let $i>0$ be a multi-scale iteration index and assume $\phi^{n}_{i\!-\!1}\in L^2(Y)$ and $u^{n}_{i-1}(\xnn) \in \r$ known. The weak solution of the problem \eqref{eq:microphase_i} is defined in
\begin{definition}
	A weak solution to the problem \eqref{eq:microphase_i} is a function $\phi^{n}_i\in H_\#^1(Y)$ satisfying
	\reqnomode
	\begin{equation}\label{eq:phasef_wit}
	\begin{aligned}
	& \langle \phi^n_i, \psi \rangle_Y + \dt \gamma \langle \nabla \phi^n_i, \nabla \psi \rangle_Y - \frac{\dt}{\lambda^2} \langle F_{\men}(\phi^n_i, u^n_{i\!-\!1}), \psi \rangle_Y + \left\langle \lstab \left( \phi^n_i\! - \!\phi^n_{i\!-\!1} \right), \psi \right\rangle_Y \\
	& \qquad = \langle\phi^{n\!-\!1}\! +\! \frac{\dt}{\lambda^2}F_{\mas}(\phi^{n\!-\!1}, u^n_{i\!-\!1}), \psi\rangle_Y, 
	\end{aligned}
	\end{equation}
	for all $\psi\in H_\#^1(Y)$.
\end{definition}
Observe that \eqref{eq:microphase_i} is a non-linear problem, which is solved numerically by employing a linear iterative scheme. To this aim we take $\llin\in \r^+$ such that $\llin \geq \mathfrak{M}_{F_1}$ and let $j\in \n$, $j\geq1$ be a micro-scale iteration index. The weak solution of the linear problem associated to \eqref{eq:microphase_i} is defined as
\begin{definition} 
	A weak solution to the linearized version of problem \eqref{eq:microphase_i} is a function $\phi^n_{i, j} \in H^1_\#(Y)$ satisfying
	\reqnomode
	\begin{equation}\label{eq:phasef_L}
	\begin{aligned}
	& \left\langle (1+\lstab)\phi^n_{i, j}, \psi \right\rangle_Y + \dt \gamma \langle \nabla \phi^n_{i, j}, \nabla \psi \rangle_Y - \frac{\dt}{\lambda^2} \langle F_{\men}(\phi^n_{i, j\!-\!1}, u^n_{i\!-\!1}), \psi \rangle_Y \\ & \qquad + \frac{\dt}{\lambda^2}\langle\llin(\phi^n_{i, j}-\phi^n_{i, j\!-\!1}), \psi \rangle_Y = \langle\phi^{n\!-\!1} + \frac{\dt}{\lambda^2} F_{\mas}(\phi^{n\!-\!1}, u^n_{i\!-\!1})+ \lstab\phi^n_{i\!-\!1} , \psi\rangle_Y, 
	\end{aligned}
	\end{equation}
	for all $\psi \in H^1_\#(Y)$.
\end{definition}

The natural choice for the initial micro-scale iteration $\phi^{n}_{i, 0}$ is $\phi^{n}_{i\!-\!1}$. Nevertheless, this choice is not compulsory for the convergence of the micro-scale linear solver as the convergence is independent of the initial guess. The iterations \eqref{eq:phasef_L} are performed until one reaches a prescribed threshold $\textit{tol}_\mu\ll \textit{tol}_M$ for the following $L^2$-norm 
\reqnomode
\begin{equation}\label{eq:tol_mu}
\epsilon_\mu^{n, i, j} := \| \phi^{n}_{i, j}(\xnn,\cdot) - \phi^{n}_{{i, j\!-1\!}}(\xnn,\cdot) \|_Y \leq \textit{tol}_\mu
\end{equation}
where $i>0$ is the iteration index of the multi-scale scheme and $j>0$ indicates the micro-scale iterations of the non-linear solver.

We highlight that in this specific case and due to the strong coupling between the flow, chemistry and the phase field over two scales, an accurate solution of the micro-scale problems is crucial to achieve convergence of the multi-scale iterations. For this reason we solve the micro-scale non-linear problem at every multi-scale iteration and take $\textit{tol}_\mu\ll \textit{tol}_M$. 

Now we show that the solution of the phase-field problem \eqref{eq:microphase_i} at every $\xnn \in \Omega$ remains bounded. For a fixed $\xnn \in \Omega$ and $n>0$, let $\phi^{n\!-\!1}\in L^2(Y)$ and a certain concentration $u^n(\xnn)$ be given. Further, let $i>0$ be a multi-scale iteration index and $\phi^{n}_{i\!-\!1}\in L^2(Y)$ be given. 

\begin{lemma}[Maximum principle for the phase-field]
	\label{lem:maxphase}
	Assume $\phi^{n\!-\!1}$, $\phi^n_{i\!-\!1}$ and $ \phi^n_{i, j\!-\!1}\in L^\infty(Y)$ are all essentially bounded by $0$ and $1$. Then $\phi^n_{i, j}\in H^1_\#(Y)$ solving \eqref{eq:phasef_L} satisfies the same essential bounds.
\end{lemma}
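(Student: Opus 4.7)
The plan is to apply a Stampacchia-type argument, testing the weak formulation \eqref{eq:phasef_L} against the cut-offs $[\phi^n_{i,j}-1]_{\mas}$ (for the upper bound) and $[\phi^n_{i,j}]_{\men}$ (for the lower bound); both lie in $H^1_\#(Y)$, since truncation preserves the space. To organise the algebra it is convenient to rewrite \eqref{eq:phasef_L} in the equivalent weak form $c\,\phi^n_{i,j}-\dt\gamma\Delta\phi^n_{i,j} = G$ in $Y$, with $c := 1+\lstab+\tfrac{\dt\llin}{\lambda^2}$ and
\[ G := \phi^{n\!-\!1} + \tfrac{\dt}{\lambda^2}F_{\mas}(\phi^{n\!-\!1},u^n_{i\!-\!1}) + \lstab\phi^n_{i\!-\!1} + \tfrac{\dt}{\lambda^2}\bigl[F_{\men}(\phi^n_{i,j\!-\!1},u^n_{i\!-\!1}) + \llin\phi^n_{i,j\!-\!1}\bigr]. \]

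The heart of the proof is then to establish the pointwise sandwich $0\le G\le c$ a.e.\ in $Y$, using the hypotheses $0\le \phi^{n\!-\!1},\phi^n_{i\!-\!1},\phi^n_{i,j\!-\!1}\le 1$ together with \Cref{prop:nonlinear} and assumptions \ref{as:a2}--\ref{as:a3}. For $G\ge 0$, the terms $\phi^{n\!-\!1}$, $\lstab\phi^n_{i\!-\!1}$ and $F_{\mas}(\phi^{n\!-\!1},u^n_{i\!-\!1})$ are each nonnegative (the last because the integrand in the definition of $F_{\mas}$ is nonnegative and $\phi^{n\!-\!1}\ge 0$), while the auxiliary map $\alpha\mapsto F_{\men}(\alpha,u)+\llin\alpha$ is nondecreasing and vanishes at $\alpha=0$ — this is precisely where the L-scheme constraint $\llin\ge\mathfrak{M}_{F_1}$ from \ref{as:a2} enters, to ensure $[\partial_1 F]_{\men}+\llin\ge 0$. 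For $G\le c$, after accounting for $\phi^{n\!-\!1}\le 1$ and $\lstab\phi^n_{i\!-\!1}\le\lstab$, the remaining inequality to verify is
\[ F_{\mas}(\phi^{n\!-\!1},u) + F_{\men}(\phi^n_{i,j\!-\!1},u) + \llin\phi^n_{i,j\!-\!1} \le \llin. \]
I would derive this by invoking $F(1,\cdot)=0$ from \ref{as:a3} to write $F_{\mas}(1,u)=-F_{\men}(1,u)$, and then observing that $\alpha\mapsto -F_{\men}(\alpha,u)+\llin(1-\alpha)$ is nonincreasing (again by $\llin\ge\mathfrak{M}_{F_1}$), so that
\[ -F_{\men}(\phi^n_{i,j\!-\!1},u) + \llin(1-\phi^n_{i,j\!-\!1}) \ge F_{\mas}(1,u) \ge F_{\mas}(\phi^{n\!-\!1},u), \]
the last step by monotonicity of $F_{\mas}(\cdot,u)$ together with $\phi^{n\!-\!1}\in[0,1]$.

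Once $0\le G\le c$ is secured, the conclusion is routine. Testing with $\psi=[\phi^n_{i,j}-1]_{\mas}\in H^1_\#(Y)$ and splitting $\phi^n_{i,j}=(\phi^n_{i,j}-1)+1$ yields
\[ c\,\|[\phi^n_{i,j}-1]_{\mas}\|^2_Y + \dt\gamma\,\|\nabla[\phi^n_{i,j}-1]_{\mas}\|^2_Y = \langle G-c,\,[\phi^n_{i,j}-1]_{\mas}\rangle_Y \le 0, \]
which forces $[\phi^n_{i,j}-1]_{\mas}\equiv 0$, i.e.\ $\phi^n_{i,j}\le 1$ a.e. Symmetrically, testing with $\psi=[\phi^n_{i,j}]_{\men}\in H^1_\#(Y)$ gives
\[ c\,\|[\phi^n_{i,j}]_{\men}\|^2_Y + \dt\gamma\,\|\nabla[\phi^n_{i,j}]_{\men}\|^2_Y = \langle G,\,[\phi^n_{i,j}]_{\men}\rangle_Y \le 0, \]
since $G\ge 0$ and $[\phi^n_{i,j}]_{\men}\le 0$; hence $\phi^n_{i,j}\ge 0$ a.e.

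The main obstacle is the upper bound $G\le c$: it is the only place where the normalization $F(1,\cdot)=0$, the L-scheme constant $\llin\ge\mathfrak{M}_{F_1}$, and the monotonicity of both pieces of the splitting $F=F_{\mas}+F_{\men}$ must cooperate simultaneously. The lower bound and the Stampacchia truncation step that finishes the argument are comparatively mechanical.
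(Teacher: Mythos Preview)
Your proof is correct and follows essentially the same Stampacchia truncation argument as the paper: both test \eqref{eq:phasef_L} with $[\phi^n_{i,j}]_{\men}$ and $[\phi^n_{i,j}-1]_{\mas}$ and exploit the same monotonicity properties of $F_{\mas}$, $F_{\men}$ together with $F(0,\cdot)=F(1,\cdot)=0$ and $\llin\ge\mathfrak{M}_{F_1}$. The only difference is presentational---the paper applies the mean value theorem to each nonlinear term separately, whereas you package everything into the pointwise sandwich $0\le G\le c$---but the underlying estimates are identical.
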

\begin{proof}
	First, we test in \eqref{eq:phasef_L} with $\psi:= [\phi_{i, j}^n]_{\men}$, then
	\reqnomode
	\begin{equation}\label{max2}
	\begin{aligned}
	& \left( 1+\lstab+\frac{\dt}{\lambda^2} \llin \right) \| [\phi_{i, j}^n]_{\men}\|^2_Y + \dt \gamma \| \nabla [\phi_{i, j}^n]_{\men}\|^2_Y \\ & \qquad = \langle\phi^{n\!-\!1} + \frac{\dt}{\lambda^2} F_{\mas}(\phi^{n\!-\!1}, u^n_{i\!-\!1}) + \lstab\phi_{j\!-\!1}^n, [\phi_{i, j}^n]_{\men}\rangle_Y \\ & \qquad \qquad + \frac{\dt}{\lambda^2} \langle F_{\men}(\phi^n_{i, j\!-\!1}, u^n_{i\!-\!1})+\llin\phi^n_{i, j\!-\!1}, [\phi_{i, j}^n]_{\men} \rangle_Y.
	\end{aligned}
	\end{equation}
	Using \ref{as:a3} and the mean value theorem on the right hand side of \eqref{max2} one obtains
	\begin{equation}\label{reml1}
	\begin{aligned}
	& \langle\phi^{n\!-\!1} + \frac{\dt}{\lambda^2}F_{\mas}(\phi^{n\!-\!1}\!, u^n_{i\!-\!1}) + \lstab\phi_{j\!-\!1}^n, [\phi_{i, j}^n]_{\men}\rangle_Y \\
	& \qquad =\langle(1+\frac{\dt}{\lambda^2}\partial_1F_{\mas}(\xi, u^n_{i\!-\!1}))\phi^{n\!-\!1} + \lstab\phi_{j\!-\!1}^n , [\phi_{i, j}^n]_{\men}\rangle_Y,
	\end{aligned}
	\end{equation}
	and
	\begin{equation}\label{reml2}
	\begin{aligned}
	& \frac{\dt}{\lambda^2} \langle F_{\men}(\phi^n_{i, j\!-\!1}\!, u^n_{i\!-\!1})+\llin\phi^n_{i, j\!-\!1}, [\phi_{i, j}^n]_{\men} \rangle_Y \\
	& \qquad = \frac{\dt}{\lambda^2} \langle \left( \partial_1F_{\men}(\eta, u^n_{i\!-\!1})+\llin\right) \phi^n_{i, j\!-\!1}, [\phi_{i, j}^n]_{\men} \rangle_Y,
	\end{aligned}
	\end{equation}
	where $\xi : Y \to \r$ and $\eta:Y \to \r$ are two functions such that $\xi(\ynn) \in (0, \phi^{n\!-\!1}(\ynn))$ and $\eta(\ynn) \in (0, \phi^{n}_{i, j-1}(\ynn))$ for all $\ynn \in Y$. Knowing that $\lstab$, $\partial_1 F_{\mas} \geq 0$ and $ \llin \geq \mathfrak{M}_{F_1}$, we get that the right-hand sides of \eqref{reml1} and \eqref{reml2} are negative. Consequently, 
	\begin{equation*}
	\left( 1+ \lstab + \frac{\dt}{\lambda^2} \llin\right) \| [\phi_{i, j}^n]_{\men}\|^2_Y + \dt \gamma \| \nabla [\phi_{i, j}^n]_{\men}\|^2_Y \leq 0, 
	\end{equation*}
	which implies $\left( 1+\lstab + \frac{\dt}{\lambda^2} \llin\right) \| [\phi_{i, j}^n]_{\men}\|^2_Y = 0$. In conclusion $[\phi_{i, j}^n]_{\men} = 0$ a.e., and with this we obtain the lower bound of $\phi_{i, j}^n$. 
	
	The upper bound follows by testing \eqref{eq:phasef_L} with $[\phi_{i, j}^n\!-\!1]_{\mas}$ and following the same steps. We obtain $\phi^n_{i, j}(\ynn)\leq 1$ a.e.
\end{proof}

As mentioned before, solving the non-linear problem accurately is crucial to guarantee the convergence of the multi-scale iterative scheme. The following theorem ensures the convergence of the micro-scale non-linear iterations under mild restrictions on $\dt$, $\llin$ and $\lstab$.

\begin{theorem}[Convergence of the non-linear solver]
	\label{te:teoremamicro}
	Let $\mathfrak{M}_{F_1}$ be as above, with $\lstab\geq0$ and $\llin\geq \mathfrak{M}_{F_1}$. If $\dt \leq \frac{\lambda^2(1+\lstab)}{\mathfrak{M}_{F_1}}$ then the L-scheme \eqref{eq:phasef_L} is convergent.
\end{theorem}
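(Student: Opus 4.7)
The plan is to prove convergence by a standard contraction argument on the sequence of linear iterates. Fix $n>0$ and $i>0$, set $e^{j+1}:=\phi^n_{i,j+1}-\phi^n_{i,j}$, and aim to show $\|e^{j+1}\|_Y \leq \rho\,\|e^j\|_Y$ with $\rho<1$ under the stated hypothesis. The linearized problem \eqref{eq:phasef_L} is a symmetric, coercive elliptic equation on $H^1_\#(Y)$, so the Lax--Milgram lemma furnishes a unique $\phi^n_{i,j}\in H^1_\#(Y)$ at each micro-iteration, and the whole argument reduces to establishing contractivity of the iteration map.

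First I would write \eqref{eq:phasef_L} at consecutive iterations $j$ and $j+1$ and subtract. The right-hand side depends only on $\phi^{n-1}$, $u^n_{i-1}$ and $\phi^n_{i-1}$, so it cancels, and the remaining error identity involves only the nonlinear increment $F_\men(\phi^n_{i,j},u^n_{i-1})-F_\men(\phi^n_{i,j-1},u^n_{i-1})$ together with the stabilization contribution $\llin(e^{j+1}-e^j)$. Testing with $\psi=e^{j+1}$ yields
\begin{equation*}
\Bigl(1+\lstab+\tfrac{\dt\llin}{\lambda^2}\Bigr)\|e^{j+1}\|_Y^2 + \dt\gamma\|\nabla e^{j+1}\|_Y^2 = \tfrac{\dt}{\lambda^2}\bigl\langle F_\men(\phi^n_{i,j})-F_\men(\phi^n_{i,j-1}),\,e^{j+1}\bigr\rangle_Y + \tfrac{\dt\llin}{\lambda^2}\langle e^j,\,e^{j+1}\rangle_Y.
\end{equation*}

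Next I would estimate the right-hand side using assumption \ref{as:a2}, which provides the pointwise Lipschitz bound $|F_\men(\phi^n_{i,j})-F_\men(\phi^n_{i,j-1})|\leq \mathfrak{M}_{F_1}|e^j|$, combined with Cauchy--Schwarz on each inner product separately. Dropping the non-negative gradient term and dividing by $\|e^{j+1}\|_Y$ (the case $e^{j+1}\equiv 0$ being immediate) produces
\begin{equation*}
\Bigl(1+\lstab+\tfrac{\dt\llin}{\lambda^2}\Bigr)\|e^{j+1}\|_Y \leq \tfrac{\dt(\mathfrak{M}_{F_1}+\llin)}{\lambda^2}\,\|e^j\|_Y,
\end{equation*}
so the contraction factor $\rho$ satisfies $\rho<1$ exactly when $\dt\,\mathfrak{M}_{F_1}<\lambda^2(1+\lstab)$, which is the hypothesis. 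Iterating gives $\|e^{j+1}\|_Y\leq \rho^j\|e^1\|_Y\to 0$, hence $\{\phi^n_{i,j}\}$ is Cauchy in $L^2(Y)$; the discarded gradient term together with the linearity of \eqref{eq:phasef_L} upgrades this to strong convergence in $H^1_\#(Y)$ toward the solution of \eqref{eq:phasef_wit}.

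The only subtle point I anticipate is the borderline case $\dt\,\mathfrak{M}_{F_1}=\lambda^2(1+\lstab)$, where the crude splitting above returns $\rho=1$ rather than strict contraction. This is resolved either by retaining the gradient term via a Young inequality with parameter $\epsilon>0$ to absorb a small fraction of $\|e^{j+1}\|_Y^2$, or by exploiting the sign structure from Proposition~\ref{prop:nonlinear}: since $\partial_1 F_\men\leq 0$ and $\llin\geq\mathfrak{M}_{F_1}$, the two right-hand-side contributions combine into $\tfrac{\dt}{\lambda^2}\bigl\langle(\llin+\partial_1 F_\men(\zeta))e^j,\,e^{j+1}\bigr\rangle_Y$ with pointwise coefficient lying in $[0,\llin]$, which tightens the bound enough to recover contraction also at the endpoint.
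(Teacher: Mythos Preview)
Your proposal is correct and follows exactly the standard L-scheme contraction argument that the paper invokes: the paper does not spell out the proof but simply states that it ``follows the same steps as the proof in \cite[Lemma~4.1]{kumar2014convergence}'', which is the very argument you reproduce. Your observation at the end---that exploiting the monotonicity of $F_{\men}$ together with $\llin\geq\mathfrak{M}_{F_1}$ yields the combined coefficient $\llin+\partial_1F_{\men}\in[0,\llin]$ and hence a contraction factor $\tfrac{\dt\llin/\lambda^2}{1+\lstab+\dt\llin/\lambda^2}<1$ without any restriction on $\dt$---is in fact sharper than the stated hypothesis, so the borderline case poses no difficulty.
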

The proof of \Cref{te:teoremamicro} follows the same steps as the proof in \cite[Lemma~4.1]{kumar2014convergence}. We omit the details here.

\section{Analysis of the multi-scale iterative scheme}
\label{sec:Analysis}

In this section we show the convergence of the multi-scale iterative scheme in steps \ref{step1} - \ref{step3}. We verify a relation between the effective diffusivity and the porosity and prove the convergence of the multi-scale iterative scheme in steps \ref{step1} - \ref{step3}. The main difficulty in the convergence proof is due to the multi-scale characteristics of the scheme and the presence of the non-linear terms. We consider a simplified setting in which the flow component is disregarded.

\begin{proposition}\label{prop}
	For $n>0$ and the multi-scale iteration index $i>0$, the effective diffusion tensors $\aef^n$ and $\aef_i^n$ are symmetric, continuous and positive definite. In other words, the constants $a_m, a_M>0$ exist such that for all $\psi \in \r^\dim$ and $\xnn \in \Omega$
	\begin{equation*}
	a_m \|\psi\|^2 \leq \psi^\mathsf{T} \, \aef^n(\xnn)\, \psi \leq a_M \|\psi\|^2, \quad \text{and} \quad a_m \|\psi\|^2 \leq \psi^\mathsf{T} \, \aef^n_i(\xnn)\, \psi \leq a_M \|\psi\|^2.
	\end{equation*}
\end{proposition}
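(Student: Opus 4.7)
The plan is to verify all three properties---symmetry, continuity, and positive definiteness---by combining the energy identity associated with the cell problem \eqref{eq:microA} with the uniform $L^\infty$ bounds $0 \leq \phi \leq 1$ (hence $\delta \leq \phi_\delta \leq 1+\delta$) provided by \Cref{lem:maxphase} (and its propagation from $\phi^n_{i,j}$ to its L-scheme limit $\phi^n_i$ and to $\phi^n$). Since $\aef^n$ and $\aef^n_i$ share the same algebraic structure, it suffices to carry out the argument for a generic effective tensor $\aef$ associated with a phase field $\phi \in L^\infty(Y;[0,1])$, at a fixed $\xnn \in \Omega$ and time.

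Symmetry is the quickest step: testing the weak form of \eqref{eq:microA} for $\omega^\scomp$ with $\omega^\rcomp$, and then exchanging the roles of $\rcomp$ and $\scomp$, gives
\[
\int_Y \phi_\delta\,\partial_\scomp \omega^\rcomp\,d\ynn \;=\; -\int_Y \phi_\delta\,\nabla \omega^\rcomp\cdot\nabla \omega^\scomp\,d\ynn \;=\; \int_Y \phi_\delta\,\partial_\rcomp \omega^\scomp\,d\ynn,
\]
from which $\aef_{\rcomp\scomp} = \aef_{\scomp\rcomp}$ is immediate.

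For positive definiteness, fix $\psi \in \r^\dim$ and introduce $W^\psi := \sum_\scomp \psi_\scomp\,\omega^\scomp$, which inherits periodicity, zero mean, and the equation $\nabla\cdot(\phi_\delta(\nabla W^\psi + \psi)) = 0$ in $Y$. A direct expansion of the quadratic form, combined with the weak form of the cell problem tested against $W^\psi$ itself, yields the two equivalent identities
\[
\psi^\mathsf{T}\aef\,\psi \;=\; \int_Y \phi_\delta\,|\psi + \nabla W^\psi|^2\,d\ynn \;=\; \int_Y \phi_\delta\,|\psi|^2\,d\ynn \;-\; \int_Y \phi_\delta\,|\nabla W^\psi|^2\,d\ynn.
\]
From the first identity, $\phi_\delta \geq \delta$ together with the periodicity relation $\int_Y \nabla W^\psi\,d\ynn = 0$ (which forces $\int_Y |\psi+\nabla W^\psi|^2\,d\ynn \geq |Y||\psi|^2 = |\psi|^2$ since $|Y|=1$) gives the lower bound with $a_m = \delta$. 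From the second identity, $\phi_\delta \leq 1+\delta$ and the non-negativity of the subtracted term yield the upper bound with $a_M = 1+\delta$. Continuity of $\xnn \mapsto \aef(\xnn,t)$ then follows from a standard stability estimate for \eqref{eq:microA}: the difference $\omega^\scomp(\xnn_1,\cdot)-\omega^\scomp(\xnn_2,\cdot)$ is controlled in $H^1_\#(Y)$ by $\|\phi_\delta(\xnn_1,\cdot)-\phi_\delta(\xnn_2,\cdot)\|_{L^\infty(Y)}$, which transfers continuity of $\phi$ in $\xnn$ to continuity of the integrals defining $\aef_{\rcomp\scomp}$ in \eqref{eq:eff}.

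The main obstacle is essentially bookkeeping: guaranteeing that $a_m$ and $a_M$ are truly uniform in $\xnn$, in the time index $n$, and in the multi-scale iteration index $i$. This uniformity is exactly what \Cref{lem:maxphase} delivers---by induction on $j$, $i$, and $n$, every iterate $\phi^n_i$ remains in $[0,1]$, so the enclosure $\delta \leq \phi_\delta \leq 1+\delta$ is independent of $\xnn$, $n$, and $i$, and the constants may be fixed once and for all. The one mildly subtle point is deriving the representation $\psi^\mathsf{T}\aef\,\psi = \int_Y \phi_\delta|\psi+\nabla W^\psi|^2\,d\ynn$, but this is only direct expansion combined with the weak form of the cell problem applied to $W^\psi$.
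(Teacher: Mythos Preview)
Your argument is correct and follows the classical homogenization route (energy identity for the cell problem, symmetry by cross-testing, coercivity via the variational representation of $\psi^\mathsf{T}\aef\,\psi$). The paper itself gives no proof here: it simply cites \cite[Proposition~6.12]{cioranescu1999introduction} for symmetry and positive definiteness, so your write-up is strictly more informative than what the paper provides.

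Two brief remarks comparing the two. First, your proof goes beyond the bare citation by making the constants explicit, $a_m=\delta$ and $a_M=1+\delta$, and by tying the uniformity in $\xnn$, $n$, and $i$ directly to the maximum principle of \Cref{lem:maxphase}; this is precisely the point that a generic reference cannot supply and is what the paper actually needs downstream in \Cref{te:teorema}. Second, your continuity argument (stability of $\omega^\scomp$ in $H^1_\#(Y)$ under perturbations of $\phi_\delta$) is morally the same estimate the paper establishes later as \Cref{lema:diff}, so you are anticipating that lemma rather than introducing anything foreign. In short: same approach as the cited reference, carried out in full and adapted to the phase-field bounds of this paper.
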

We refer to \cite[Proposition~6.12]{cioranescu1999introduction} for the proof of the symmetry and positive definiteness of the effective diffusion tensor.

\reqnomode
For $n>0$, let $u^{n\!-\!1}\in L^2(\Omega)$ and $\overline{\phi}^{n} ,\overline{\phi}^{n\!-\!1} \in L^\infty(\Omega)$ be given. In the absence of flow, the weak solution of the problem \eqref{eq:macro2} is defined as follows
\begin{definition}\label{def_macro}
	A weak solution to the problem \eqref{eq:macro2} is a function $u^{n}\in H_0^1(\Omega)$ satisfying
	\begin{equation}\label{eq:macro_w}
	\left\langle \overline{\phi}^{n} (u^{n} - u^{\star}) , v \right\rangle_{\Omega} + \dt D \left\langle \aef^n\nabla u^n, \nabla v \right\rangle_{\Omega} = \left\langle \overline{\phi}^{n\!-\!1} (u^{n\!-\!1}- u^{\star}), v \right\rangle_{\Omega}, 
	\end{equation}
	for all $v\in H_0^1(\Omega)$.
\end{definition}
We let $i\in\n$ denote the multi-scale iteration index. The iterated porosity\\  $\overline{\phi}^{n}_i(\xnn) := \int_Y \phi^n_i(\xnn, \ynn) \, d\ynn$ is given for all $\xnn \in \Omega$ and the diffusivity tensor $\aef_i^n$ depends on $\phi^n_i$ as explained in \eqref{eq:eff}. In the absence of flow, the weak solution of the problem \eqref{eq:macro2_i} is defined as follows
\begin{definition}\label{def_macroit}
	A weak solution to the problem \eqref{eq:macro2_i} is a function $u^{n}_i\in H_0^1(\Omega)$ satisfying
	\begin{equation}\label{eq:macro_wit}
	\left\langle \overline{\phi}^{n}_i (u^{n}_i -u^{\star}), v \right\rangle_{\Omega} + \dt D \left\langle \aef_i^n\nabla u^n_i, \nabla v \right\rangle_{\Omega} = \left\langle \overline{\phi}^{n\!-\!1} (u^{n\!-\!1} - u^{\star}), v \right\rangle_{\Omega}, 
	\end{equation}
	for all $v\in H_0^1(\Omega)$.
\end{definition}

For a fixed time $n>0$ and in order to prove the convergence of the multi-scale iterative scheme, we make three extra assumptions and show an important property of the effective diffusion tensor.
\begin{assumption}\label{as:pro}
	The porosity $\bphi^n$ is bounded away from $0$ and $1$. That is, there exists two constants $\bphi_m$ and $\bphi_M$ such that $0< \bphi_m \leq \bphi^n(\xnn) \leq \bphi_M<1$ a.e.
\end{assumption}
\begin{assumption}\label{as:cons}
	The concentration is such that $\|\nabla u^n\|_{L^\infty(\Omega)}\leq C_u$ for some constant $C_u>0$.
\end{assumption}
\begin{assumption}\label{as:consw}
	For every time step, multi-scale iteration and macro-scale location, the solution of the micro-scale cell problems \eqref{eq:microA} is such that \\
	$\|\nabla \omega^\scomp\|_{L^\infty(\Omega)}\leq C_w$ for some constant $C_w>0$ and for all $\scomp\in\{1, \dots, \dim\}$.
\end{assumption}
Assuming the essential boundedness of the gradients of $u^n$, respectively $\omega^\scomp$ is justified under certain conditions. For example, since $u^n_{i-1}$ is constant in $Y$, the solutions to the micro-scale elliptic problems are bounded uniformly w.r.t. $i$ in $H^1(Y)$, and have a better regularity than $H^1$. Assuming that $\nabla \phi^{n-1}$ is essentially bounded, one obtains bounds for $\nabla \phi^n_{i}$ by using \eqref{eq:microphase_i}. Furthermore, with a fixed $\delta > 0$ and recalling the essential bounds proved in Lemma \ref{lem:maxphase}, the problem \eqref{eq:microA} solved by $\omega^\scomp$ is linear, elliptic, and the coercivity constant is uniformly bounded. In view of the regularity and boundedness of $\phi^n_i$, one obtains that $\nabla \omega^\scomp$ is essentially bounded as well. Finally, for the macro-scale problem \eqref{eq:macro2_i}, assuming the the domain $\Omega$ and the initial data are sufficiently smooth, the essential boundedness of the gradient of $u^n$ can be obtained e.g. as in \cite[Chapter~3.15]{Ladyzhenskaya}.

For proving the convergence of the iterative scheme we start by showing that the changes in the phase field are bounding the variations in the diffusion tensor. We refer to \cite{schulz2019beyond, bringedal2017effective, ray2018old} for numerical studies revealing the relation between diffusivity (and permeability) and porosity.

\begin{lemma}\label{lema:diff}
	Let $n>0$ and $i>0$ be fixed. There exists a constant $C_A>0$ such that 
	\begin{equation}\label{eq:rel}
	\|\aef^n_i - \aef^n\|_{\Omega} \leq C_A\|\phi^n_i -\phi^n\|_{\Omega\times Y}.
	\end{equation}
\end{lemma}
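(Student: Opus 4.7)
The plan is to split the entry-wise difference $\aef^n_{i,\rcomp\scomp} - \aef^n_{\rcomp\scomp}$ into a \emph{direct} contribution from $\phi^n_i - \phi^n$ and an \emph{indirect} contribution from the difference of the cell solutions $\omega^{\scomp,n}_i - \omega^{\scomp,n}$, then control the indirect piece by returning to the cell problem \eqref{eq:microA} and exploiting its uniform ellipticity (thanks to the $\delta$-regularization). The final bound then follows by Cauchy--Schwarz together with Lemma \ref{lem:maxphase} and Assumption \ref{as:consw}.

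\textbf{Step 1: Algebraic decomposition.} Let $\omega^{\scomp,n}$ and $\omega^{\scomp,n}_i$ denote the cell solutions to \eqref{eq:microA} associated with $\phi^n$ and $\phi^n_i$ respectively. Using $\phi^n_{i,\delta} - \phi^n_\delta = \phi^n_i - \phi^n$ and adding/subtracting $\phi^n_\delta\partial_\rcomp \omega^{\scomp,n}_i$ in \eqref{eq:eff}, I would write, for a.e. $\xnn \in \Omega$,
\[
\aef^n_{i,\rcomp\scomp}(\xnn) - \aef^n_{\rcomp\scomp}(\xnn) = \int_Y (\phi^n_i-\phi^n)(\delta_{\rcomp\scomp} + \partial_\rcomp\omega^{\scomp,n}_i)\,d\ynn + \int_Y \phi^n_\delta\, \partial_\rcomp(\omega^{\scomp,n}_i - \omega^{\scomp,n})\,d\ynn.
\]
The first integral is bounded by $(1+C_w)|Y|^{1/2}\|\phi^n_i(\xnn,\cdot) - \phi^n(\xnn,\cdot)\|_Y$ through Cauchy--Schwarz and Assumption \ref{as:consw}.

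\textbf{Step 2: $H^1$-estimate for the cell-solution difference.} Subtracting the weak forms of \eqref{eq:microA} corresponding to $\phi^n$ and $\phi^n_i$, the difference $\Delta\omega^\scomp := \omega^{\scomp,n}_i - \omega^{\scomp,n} \in H^1_\#(Y)$ satisfies, for every $\psi \in H^1_\#(Y)$,
\[
\int_Y \phi^n_{i,\delta}\, \nabla \Delta\omega^\scomp \cdot \nabla \psi \,d\ynn = -\int_Y (\phi^n_i - \phi^n)(\nabla\omega^{\scomp,n} + \enn_\scomp)\cdot \nabla\psi\,d\ynn.
\]
Testing with $\psi = \Delta\omega^\scomp$, using the lower bound $\phi^n_{i,\delta}\geq \delta$ (a consequence of Lemma \ref{lem:maxphase}) and Assumption \ref{as:consw} to absorb $\|\nabla\omega^{\scomp,n}+\enn_\scomp\|_{L^\infty(Y)}$, I obtain
\[
\delta\, \|\nabla \Delta\omega^\scomp\|_Y^2 \leq (1+C_w)\, \|\phi^n_i(\xnn,\cdot) - \phi^n(\xnn,\cdot)\|_Y\, \|\nabla \Delta\omega^\scomp\|_Y,
\]
so $\|\nabla \Delta\omega^\scomp\|_Y \leq \tfrac{1+C_w}{\delta}\|\phi^n_i(\xnn,\cdot) - \phi^n(\xnn,\cdot)\|_Y$. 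Bounding the second integral in Step 1 by $(1+\delta)|Y|^{1/2}\|\nabla \Delta\omega^\scomp\|_Y$ yields the pointwise-in-$\xnn$ estimate
\[
|\aef^n_{i,\rcomp\scomp}(\xnn) - \aef^n_{\rcomp\scomp}(\xnn)| \leq \widetilde C\, \|\phi^n_i(\xnn,\cdot) - \phi^n(\xnn,\cdot)\|_Y,
\]
with $\widetilde C$ depending only on $|Y|$, $\delta$, and $C_w$.

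\textbf{Step 3: Integration over $\Omega$.} Squaring, integrating over $\Omega$, and summing over $\rcomp,\scomp\in\{1,\dots,\dim\}$ (using any equivalent matrix norm) gives the claim with $C_A$ depending on $\dim$, $\delta$, $C_w$, and $|Y|$.

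\textbf{Main obstacle.} The core difficulty is the $H^1(Y)$ bound of Step 2, where the $L^2$ difference of the phase field must control the gradient of the cell-solution difference. This rests crucially on two ingredients: the regularization $\phi_\delta = \phi + \delta$, which delivers uniform coercivity of \eqref{eq:microA} and is responsible for the $1/\delta$ factor in $C_A$ (any attempt to avoid $\delta$ leads to degeneracy as $\phi\to 0$); and Assumption \ref{as:consw}, which is needed to turn the source term $(\phi^n_i-\phi^n)(\nabla\omega^{\scomp,n}+\enn_\scomp)$ in the equation for $\Delta\omega^\scomp$ into a quantity controlled by $\|\phi^n_i-\phi^n\|_Y$ alone.
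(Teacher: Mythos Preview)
Your proof is correct and follows essentially the same route as the paper: subtract the two cell problems, test with the difference $\Delta\omega^\scomp$ to obtain the $H^1(Y)$ bound $\|\nabla\Delta\omega^\scomp\|_Y \leq \tfrac{1+C_w}{\delta}\|\phi^n_i-\phi^n\|_Y$, then split the entrywise difference of $\aef$ into a direct $(\phi^n_i-\phi^n)$-term and a $\nabla\Delta\omega^\scomp$-term and integrate over $\Omega$. The only cosmetic difference is that the paper adds and subtracts $(\phi^n_i+\delta)\partial_\rcomp\omega^{\scomp,n}$ (so the non-iterated $\omega^{\scomp,n}$ appears in the direct piece) whereas you add and subtract $\phi^n_\delta\,\partial_\rcomp\omega^{\scomp,n}_i$; both choices are covered by Assumption~\ref{as:consw} and lead to the same constant up to harmless factors.
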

\begin{proof}
	For each $\xnn \in \Omega$ we denote $\omega_{i,n}^\scomp$ and $\omega_{n}^\scomp$ the $\scomp$-component of the solution of the micro-scale cell problems \eqref{eq:microA} that corresponds to $\phi_i^n$ and $\phi^n$. By subtracting those two cell problems we get formally that 
	\begin{equation*}
	\nabla \cdot ((\phi_i^n + \delta)(\nabla (\omega_{i,n}^\scomp - \omega_{n}^\scomp))) = - \nabla \cdot ((\phi_i^n-\phi^n)( \enn_\scomp + \nabla \omega_{n}^\scomp )).
	\end{equation*}
	From this one immediately obtains that 
	\begin{equation}\label{eq:var1}
	| \left\langle (\phi_i^n + \delta)\nabla (\omega_{i,n}^\scomp - \omega_{n}^\scomp), \nabla \psi \right\rangle_Y | = |\left\langle (\phi^n-\phi_i^n)( \enn_\scomp + \nabla \omega_{n}^\scomp ), \nabla\psi \right\rangle_Y|
	\end{equation}
	for all $\psi \in H_\#^1(Y)$. Since $|Y| =1$ and $0\leq \phi_i^n$, by taking $\psi = \omega_{i,n}^\scomp - \omega_{n}^\scomp$ in \eqref{eq:var1}, applying Cauchy-Schwartz and due to Assumption \ref{as:consw} we obtain
	\begin{equation}\label{eq:var2}
	\| \nabla (\omega_{i,n}^\scomp - \omega_{n}^\scomp) \|_{L^1(Y)} \leq \| \nabla (\omega_{i,n}^\scomp - \omega_{n}^\scomp) \|_{L^2(Y)} \leq \frac{1+C_w}{\delta} \| \phi_i^n-\phi^n\|_Y. 
	\end{equation}
	
	On the other hand, for each component $\rcomp\scomp$ of $\aef^n_i(\xnn) - \aef^n(\xnn)$ it is easy to show that 
	\begin{align*}
	|[\aef^n_i(\xnn)]_{\rcomp\scomp} - [\aef^n(\xnn)]_{\rcomp\scomp}| &\leq \int_Y |\phi_i^n-\phi^n| d\ynn + \int_Y |(\phi_i^n+\delta)\partial_\rcomp\omega^\scomp_{i,n}-(\phi^n+\delta)\partial_\rcomp\omega^\scomp_{n}| d\ynn \\
	& \leq \int_Y |\phi_i^n-\phi^n| d\ynn  \\ &\qquad + \int_Y |(\phi_i^n+\delta)\left( \partial_\rcomp\omega^\scomp_{i,n} -\partial_\rcomp\omega^\scomp_{n}\right)|+ |(\phi_i^n-\phi^n)\partial_\rcomp\omega^\scomp_{n}|d\ynn \\
	&\leq (1+C_w) \|\phi_i^n-\phi^n\|_Y + \int_Y |(\phi_i^n+\delta)(\partial_\rcomp\omega^\scomp_{i,n}-\partial_\rcomp\omega^\scomp_{n})| d\ynn.
	\end{align*}
	By using \eqref{eq:var2} and the equivalence of norms in $\r^{\dim\times\dim}$ one gets 
	\begin{equation*}
	C_{f}\|[\aef^n_i(\xnn)] - [\aef^n(\xnn)]\|_{2,\r^{\dim\times\dim}} \leq	\|[\aef^n_i(\xnn)] - [\aef^n(\xnn)]\|_{1,\r^{\dim\times\dim}} \leq \frac{\dim(1+C_w)(1+\delta)}{\delta} \|\phi_i^n-\phi^n\|_Y,
	\end{equation*}
	where $\| \cdot \|_{p,\r^{\dim\times\dim}}$ denotes the $L^p$ matrix norm induced by the $L^p$ vector norm for $p=1,2$. The constant $C_{f}>0$ is coming from the equivalence between the induced $L^1$ and $L^2$ matrix norms. By integrating over $\Omega$, we conclude that
	\begin{equation*}
	\|\aef^n_i - \aef^n\|_{\Omega} \leq  \frac{\dim(1+C_w)(1+\delta)}{C_{f}\delta} \|\phi^n_i -\phi^n\|_{\Omega\times Y},
	\end{equation*}
\end{proof}

\paragraph{The multi-scale error equations}
For a fixed $n>0$ and the iteration index $i>0$, we define $ e_i^{\phi}:= \phi_i^n -\phi^n$, $e_i^{u}:= u_i^n -u^n$ and $e_i^{\bphi}:= \bphi_i^n - \bphi^n $. Subtracting \eqref{eq:phasef_wit} from \eqref{eq:phasef_w} and \eqref{eq:macro_wit} from \eqref{eq:macro_w} shows that
\begin{equation}\label{eq:error_phasef}
\begin{aligned}
& \langle e_i^{\phi}, \psi \rangle_Y + \dt \gamma \langle \nabla e_i^{\phi}, \nabla \psi \rangle_Y + \frac{\dt}{\lambda^2} \lstab \langle( e_i^{\phi}-e^{\phi}_{i\!-\!1}), \psi \rangle_Y \\
& \qquad = \frac{\dt}{\lambda^2} \langle F_{\men}(\phi^n_i, u^n_{i\!-\!1}) - F_{\men}(\phi^n, u^n), \psi \rangle_Y  \\
& \qquad \qquad + \frac{\dt}{\lambda^2} \langle F_{\mas}(\phi^{n\!-\!1}, u^n_{i\!-\!1}) - F_{\mas}(\phi^{n\!-\!1}, u^n), \psi \rangle_Y , 
\end{aligned}
\end{equation}
\begin{equation}\label{eq:error_macro}
\left\langle \overline{\phi}_i^{n} e_i^{u} , v \right\rangle_{\Omega} + \dt D \left( \left\langle \aef_i^n \nabla u^n_i, \nabla v \right\rangle_{\Omega} - \left\langle \aef^n \nabla u^n, \nabla v \right\rangle_{\Omega}\right) = \left\langle (u^n - u^{\star}) e_i^{\bphi} , v \right\rangle_{\Omega}, 
\end{equation}
for all $\psi\in H_{\#}^1(Y)$ and $v\in H_{0}^1(\Omega)$. We use this to prove the convergence of the multi-scale scheme. 
\begin{theorem}[Convergence of the multi-scale scheme]
	\label{te:teorema}
	Let $n>0$ be fixed and $\bphi^{n\!-\!1}$ be given. Under the Assumptions \ref{as:pro} - \ref{as:consw} and \ref{as:a2} - \ref{as:a3}, with $\mathfrak{M}:= \max \left( \mathfrak{M}_{F_1}, \mathfrak{M}_{F_2}\right)$ and $\lstab > 6\mathfrak{M}$, if the time step is small enough (i.e. satisfying \eqref{eq:cond_dt} below), the multi-scale iterative scheme in steps \ref{step1} - \ref{step3} is convergent.
\end{theorem}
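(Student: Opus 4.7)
The plan is to derive two coupled estimates --- one for the phase-field iteration error $e_i^{\phi}$ and one for the concentration error $e_i^{u}$ --- and close them into a contraction argument by exploiting the stabilization parameter $\lstab$ together with smallness of $\dt$. Throughout, I use that $\|e_i^{\bphi}\|_\Omega \leq \|e_i^{\phi}\|_{\Omega\times Y}$ (from Jensen/Cauchy--Schwarz applied to the defining integral of $\bphi$), so that convergence of $\phi^n_i$ controls $\bphi^n_i$, which matches the stopping criterion.

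\textbf{Step 1 (micro-scale error estimate).} Test the phase-field error equation \eqref{eq:error_phasef} with $\psi = e_i^{\phi}$ and integrate over $\Omega$. The stabilization term yields, by the identity $2(a-b)a = a^2 - b^2 + (a-b)^2$, the telescoping contribution
\begin{equation*}
\tfrac{\lstab}{2}\bigl(\|e_i^{\phi}\|_{\Omega\times Y}^2 - \|e_{i-1}^{\phi}\|_{\Omega\times Y}^2 + \|e_i^{\phi} - e_{i-1}^{\phi}\|_{\Omega\times Y}^2\bigr).
\end{equation*}
On the right-hand side, split the differences of $F_{\men}$ and $F_{\mas}$ by inserting intermediate terms: for instance $F_{\men}(\phi^n_i,u^n_{i-1}) - F_{\men}(\phi^n,u^n) = [F_{\men}(\phi^n_i,u^n_{i-1}) - F_{\men}(\phi^n,u^n_{i-1})] + [F_{\men}(\phi^n,u^n_{i-1}) - F_{\men}(\phi^n,u^n)]$. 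By assumptions \ref{as:a2}--\ref{as:a3}, the first bracket is bounded pointwise by $\mathfrak{M}|e_i^{\phi}|$ and the second by $\mathfrak{M}|e_{i-1}^u|$; similarly for the $F_{\mas}$ difference. Applying Cauchy--Schwarz and Young's inequality with carefully chosen weights absorbs the $\|e_i^{\phi}\|^2$ contributions into the stabilization, giving
\begin{equation*}
\bigl(1 + C_1\lstab - C_2\mathfrak{M}\bigr)\|e_i^{\phi}\|_{\Omega\times Y}^2 \leq C_3\lstab\|e_{i-1}^{\phi}\|_{\Omega\times Y}^2 + C_4\tfrac{\dt}{\lambda^2}\mathfrak{M}\|e_{i-1}^{u}\|_\Omega^2,
\end{equation*}
where the condition $\lstab>6\mathfrak{M}$ guarantees the coefficient on the left is strictly positive and the factor multiplying $\|e_{i-1}^{\phi}\|^2$ is bounded by a quantity strictly less than $1$ (up to the $u$-contribution).

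\textbf{Step 2 (macro-scale error estimate).} Test \eqref{eq:error_macro} with $v = e_i^{u}$. The mass term is bounded below using \Cref{as:pro} by $\bphi_m\|e_i^{u}\|_\Omega^2$. For the diffusion, decompose
\begin{equation*}
\aef_i^n\nabla u^n_i - \aef^n\nabla u^n = \aef_i^n\nabla e_i^{u} + (\aef_i^n - \aef^n)\nabla u^n,
\end{equation*}
use coercivity of $\aef_i^n$ from \Cref{prop}, and bound the perturbation via \Cref{lema:diff} and \Cref{as:cons}: $\|(\aef_i^n - \aef^n)\nabla u^n\|_\Omega \leq C_A C_u\|e_i^{\phi}\|_{\Omega\times Y}$. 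The right-hand side is handled with the $L^\infty$-bound on $u^\star$ and $u^n$ together with $\|e_i^{\bphi}\|_\Omega \leq \|e_i^{\phi}\|_{\Omega\times Y}$. Young's inequality then produces
\begin{equation*}
\|e_i^{u}\|_\Omega^2 \leq \kappa\,\|e_i^{\phi}\|_{\Omega\times Y}^2,
\end{equation*}
for a constant $\kappa$ depending on $\bphi_m$, $D$, $C_A$, $C_u$, $\|u^n\|_\infty$ and $u^\star$.

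\textbf{Step 3 (closing the contraction).} Shifting Step 2 by one iteration gives $\|e_{i-1}^{u}\|_\Omega^2 \leq \kappa\|e_{i-1}^{\phi}\|_{\Omega\times Y}^2$. Substituting into the estimate of Step 1 yields a recursion purely in the phase-field errors,
\begin{equation*}
\|e_i^{\phi}\|_{\Omega\times Y}^2 \leq \Bigl(\alpha(\lstab,\mathfrak{M}) + \beta(\dt,\kappa,\lambda,\mathfrak{M})\Bigr)\|e_{i-1}^{\phi}\|_{\Omega\times Y}^2,
\end{equation*}
where $\alpha<1$ thanks to $\lstab>6\mathfrak{M}$, and $\beta$ can be made arbitrarily small by imposing $\dt$ sufficiently small, leading to the quantitative condition
\begin{equation}\label{eq:cond_dt}
\dt \leq \frac{\lambda^2(1-\alpha)}{C\,\kappa\,\mathfrak{M}}
\end{equation}
for an explicit constant $C$ coming from the Young weights. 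The contraction for $e_i^{\phi}$ implies convergence of $\bphi_i^n$ and hence, via Step 2, of $u_i^n$.

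\textbf{Main obstacle.} The delicate part is the bookkeeping in Step 3: the phase-field estimate couples to $\|e_{i-1}^{u}\|$ (through $u^n_{i-1}$ in $F_\pm$) while the macro estimate couples to $\|e_i^{\phi}\|$ (through $\bphi_i^n$ and $\aef_i^n$). One must ensure the Young weights are chosen so that substitution does not reintroduce a $\|e_i^{\phi}\|^2$ term with coefficient exceeding what $\lstab$ can absorb. Balancing these weights, together with the explicit dependence of $\kappa$ on $\bphi_m$ and $C_A$ (which in turn hides a $1/\delta$ via \Cref{lema:diff}), is what forces the quantitative threshold $\lstab>6\mathfrak{M}$ and fixes the admissible range of $\dt$.
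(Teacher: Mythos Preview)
Your strategy matches the paper's proof: test \eqref{eq:error_phasef} with $e_i^{\phi}$, test \eqref{eq:error_macro} with $e_i^{u}$, substitute the macro estimate (shifted to index $i-1$) into the micro estimate, and obtain a contraction for $\|e_i^{\phi}\|_{\Omega\times Y}$. Two points of your bookkeeping do not quite line up with what actually happens, though.

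First, the claim that ``$\alpha<1$ thanks to $\lstab>6\mathfrak{M}$'' is a misattribution. With either your telescoping identity or the paper's Young inequality (with $\delta_1=1$), the $\lstab$-contributions on the left and right carry the \emph{same} coefficient, so they cancel when you form the ratio. Concretely, the paper obtains
\[
\Bigl(1+\tfrac{\dt}{\lambda^2}\bigl(\tfrac{\lstab}{2}-3\mathfrak{M}\bigr)\Bigr)\|e_i^{\phi}\|_{\Omega\times Y}^2 \;\le\; \tfrac{\lstab\dt}{2\lambda^2}\|e_{i-1}^{\phi}\|_{\Omega\times Y}^2 + \tfrac{\mathfrak{M}\dt}{2\lambda^2}\|e_{i-1}^{u}\|_{\Omega}^2,
\]
and the role of $\lstab>6\mathfrak{M}$ is only to keep the left-hand coefficient strictly above $1$; the gap that makes the ratio a contraction is the bare ``$1$'' on the left, which must dominate the $3\mathfrak{M}\dt/\lambda^2$ leakage --- a condition on $\dt$, not on $\lstab$.

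Second, your constant $\kappa$ in Step~2 is not independent of $\dt$: the diffusion perturbation term $\dt D\langle(\aef_i^n-\aef^n)\nabla u^n,\nabla e_i^{u}\rangle$ contributes $\tfrac{\dt D}{a_m\bphi_m}C_u^2C_A^2$ to $\kappa$ after Young. Consequently the admissible range for $\dt$ is governed by a \emph{quadratic} inequality (the paper's \eqref{eq:cond_dt}), not the linear bound you wrote. This is harmless for the existence of an admissible $\dt$, but your displayed condition should be adjusted.
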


\begin{proof}
	For a fixed macro-scale point $\xnn\in \Omega$ and the iteration index $i>0$, we consider the error equation \eqref{eq:error_phasef} and take the test function $\psi=e^\phi_i$.
	By the mean value theorem and \ref{as:a2} - \ref{as:a3}, one gets
	\begin{equation*}\label{eq:1theo}
	\begin{aligned}
	& \| e_i^\phi\|^2_Y + \dt \gamma \| \nabla e_i^\phi\|_Y^2 + \lstab\frac{\dt}{\lambda^2} \| e_i^\phi\|^2_Y \leq \lstab \frac{\dt}{\lambda^2} \langle e_{i\!-\!1}^\phi, e_i^\phi\rangle_Y \\
	& \qquad +\frac{\dt}{\lambda^2} \langle 2\mathfrak{M} e_{i\!-\!1}^u, e_i^\phi\rangle_Y + \frac{\dt}{\lambda^2} \langle \mathfrak{M} e_{i}^\phi, e_i^\phi\rangle_Y.
	\end{aligned}
	\end{equation*}
	Using Young's inequality on the first two terms on the right hand side, with $\delta_1, \delta_2 >0$ one obtains
	\begin{equation*}
	\begin{aligned}
	& \left( 1 + \frac{\dt}{\lambda^2} \left( \lstab-\mathfrak{M}\right) \right) \| e_i^\phi\|^2_Y + \dt \gamma \| \nabla e_i^\phi\|_Y^2  \\
	& \qquad \leq \lstab\frac{\dt}{\lambda^2 } \frac{\delta_1}{2} \| e_{i\!-\!1}^\phi\|^2_Y + \lstab\frac{\dt}{\lambda^2 } \frac{1}{2\delta_1} \| e_{i}^\phi\|^2_Y + \mathfrak{M}\frac{\dt \delta_2}{\lambda^2 } | e_{i\!-\!1}^u|_Y^2 + \mathfrak{M}\frac{\dt}{\lambda^2} \frac{1}{\delta_2} \| e_{i}^\phi\|^2_Y.
	\end{aligned}
	\end{equation*}
	By taking $\delta_1=1$ and $\delta_2=\frac{1}{2}$, we get
	\begin{equation*}
	\begin{aligned}
	& \left( 1 + \frac{\dt}{\lambda^2}\left( \frac{\lstab}{2}-3\mathfrak{M}\right) \right) \| e_i^\phi\|_Y^2 \leq \lstab\frac{\dt}{2\lambda^2 } \| e_{i\!-\!1}^\phi\|_Y^2 + \mathfrak{M}\frac{\dt}{2\lambda^2} | e_{i\!-\!1}^u|_Y^2.
	\end{aligned}
	\end{equation*}
	Integrating over the macro-scale domain $\Omega$, we conclude that
	\begin{equation}\label{eq:proof_micro}
	\left( 1 + \frac{\dt}{\lambda^2}\left( \frac{\lstab}{2}-3\mathfrak{M}\right) \right) \| e_i^\phi\|_{\Omega\times Y}^2 \leq \lstab\frac{\dt}{2\lambda^2 } \| e_{i\!-\!1}^\phi\|_{\Omega\times Y}^2 + \mathfrak{M}\frac{\dt}{2\lambda^2 } \| e_{i\!-\!1}^u\|_{\Omega}^2.
	\end{equation}
	On the other hand, taking the test function $v= e_i^u$ on the macro-scale error equation \eqref{eq:error_macro} and using the Assumption \ref{as:pro} and the Proposition \ref{prop}, we have
	\begin{equation*}
	\bphi_m \| e_i^u\|^2_\Omega + \dt D a_m \| \nabla e_i^u \|^2_\Omega \leq \dt D \langle (\aef_i^n-\aef^n) \nabla u^n, \nabla e_i^u \rangle_\Omega + \langle (u^\star - u^n) e_i^{\bphi}, e_i^u\rangle_\Omega.
	\end{equation*}
	When using Young's inequality twice with $\delta_3, \delta_4 >0$, we obtain
	\begin{equation*}
	\begin{aligned}
	& \bphi_m \| e_i^u\|^2_\Omega + \dt D a_m \| \nabla e_i^u \|^2_\Omega \leq \dt D \left( \frac{\delta_3}{2} \|(\aef_i^n-\aef^n)\nabla u^n\|^2_{\Omega} + \frac{1}{2\delta_3}\|\nabla e_i^u\|^2_{\Omega}\right) \\
	& \qquad + \frac{\delta_4}{2}\|(u^\star - u^n) e_i^{\bphi}\|^2_{\Omega} + \frac{1}{2\delta_4} \| e_i^u \|^2_{\Omega}.
	\end{aligned}
	\end{equation*}
	We take $\delta_3= \frac{1}{a_m}$ and $\delta_4 = \frac{1}{\bphi_m}$ and due to the Assumption \ref{as:cons} and Lemma \ref{lema:diff} we obtain
	\begin{equation*}
	\frac{\bphi_m}{2} \| e_i^u\|^2_\Omega + \frac{\dt D a_m}{2} \| \nabla e_i^u \|^2_\Omega \leq \frac{\dt D}{2a_m} C_u^2 C_A^2 \| e_i^{\phi}\|^2_{\Omega\times Y} + \frac{1}{2\bphi_m} \bar{u}^{\star 2} \| e_i^{\bphi}\|^2_{\Omega},
	\end{equation*}
	where $\bar{u}^{\star}:= u^{\star}+C_pC_u$ with $C_p$ being a constant coming from the Poincar\'e inequality. Since $|Y| = 1$, one has $\| e_i^{\bphi}\|_\Omega \leq \| e_i^{\phi}\|_{\Omega\times Y}$, implying
	\begin{equation}\label{eq:proof_macro}
	\| e_i^u\|^2_\Omega \leq \left( \frac{\dt D}{a_m \bphi_m} C_u^2 C_A^2 + \frac{\bar{u}^{\star 2}}{\bphi_m^2} \right) \| e_i^{\phi}\|^2_{\Omega \times Y}.
	\end{equation}
	Observe that the constants in \eqref{eq:proof_macro} do not depend on the iteration index, i.e. \eqref{eq:proof_macro} can be written for the index $i-1$ as well. Using this in \eqref{eq:proof_micro} we obtain
	\begin{align}\label{eq:proof_end1}
	&\left( 1 + \frac{\dt}{\lambda^2}\left( \frac{\lstab}{2}-3\mathfrak{M}\right) \right) \| e_i^\phi\|_{\Omega\times Y}^2 \nonumber\\
	& \qquad \leq \left( \lstab\frac{\dt}{2\lambda^2 } + \mathfrak{M}\frac{\dt}{2\lambda^2 }\left( \frac{\dt D}{a_m \bphi_m} C_u^2 C_A^2 + \frac{\bar{u}^{\star 2}}{\bphi_m^2} \right)\right)  \| e_{i\!-\!1}^\phi\|_{\Omega\times Y}^2.
	\end{align}
	
	Clearly, \eqref{eq:proof_end1} can be rewritten as  $\|e^\phi_i\|^2 \leq C  \|e^\phi_{i-1}\|^2$. By taking the time step $\Delta t$ sufficiently small, one obtains $C < 1$, so the error is contractive. Specifically, if $\dt>0$ satisfies the inequality 
	\begin{equation}\label{eq:cond_dt}
	\left( \frac{\mathfrak{M}DC_u^2 C_A^2}{2\lambda^2a_m \bphi_m}\right) \dt^2 + \frac{\mathfrak{M}}{\lambda^2}\left( \frac{\bar{u}^{\star 2}}{2\bphi_m^2}+3\right) \dt  < 1,
	\end{equation}
	then \eqref{eq:proof_end1} is a contraction. By the Banach theorem we conclude that $\| e_i^\phi\|_{\Omega\times Y} \to 0$ as $i\to \infty$. This, together with \eqref{eq:proof_macro} implies that $\| e_i^u\|_{\Omega} \to 0$ as $i\to \infty$, which proves the convergence of the multi-scale iterative scheme. 
	
\end{proof}

\begin{remark}
	The inequality \eqref{eq:cond_dt} imposes a restriction in the time step $\dt$, and can clearly be fulfilled for some real $\dt>0$. This restriction guarantees the convergence of the iterative scheme and does not depend on the spatial discretization. Also note that the convergence is achieved for any starting point. Nevertheless, finding specific bounds for $\dt$ from \eqref{eq:cond_dt} is not obvious because it depends on unknown constants. In \Cref{sec:numeric} we choose $\dt$ based on numerical experiments inspired by \cite{storvik2019optimization}, where a coarse spatial discretization is used to estimate a suitable time step size.
\end{remark}

\section{The adaptive strategy}
\label{sec:adaptivity}
We design a multi-scale adaptive strategy to localize and reduce the error and to optimize the computational cost of the multi-scale simulations.

Let $\trian_{H}$ be a triangular partition of the macro-scale domain $\Omega$ with elements $T$ of diameter $H_{T}$ and $H := \max\limits_{T\in\trian_{H}}{H_{T}}$. We assign one micro-scale domain $Y$ to the barycentre (or integration point) of each macro-scale element $T$. At each micro-scale domain $Y$ we define another triangular partition $\trian_{h}$ with elements $T_\mu$ of diameter $h_{T_\mu}$ and $h := \max\limits_{T_\mu\in\trian_{h}}{h_{T_\mu}}$. In \Cref{fig:meshes0}, the structure and the notation of the meshes are shown.
\begin{figure}[htpb!]
	\centering
	\includegraphics[width=0.6\textwidth]{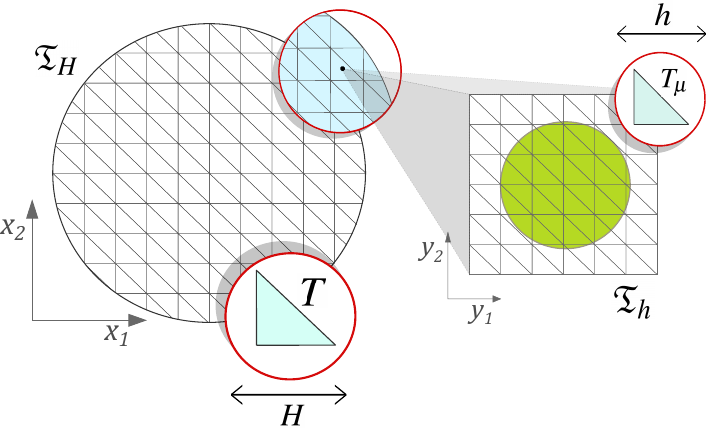}
	\caption{Sketch of the macro-scale and micro-scale meshes. For each $T\in \trian_H$ there is one corresponding micro-scale domain $Y$ with a micro-scale mesh $\trian_h$.}
	\label{fig:meshes0}
\end{figure}
We first present the mesh refinement strategy used in the micro scale and thereafter we turn to the macro-scale adaptive strategy used to optimize the computations.

\subsection{The micro-scale mesh adaptivity}

The accuracy in the solution of the phase field is influenced by the mesh size of the micro-scale discretization. It is necessary to create a fine mesh such that $h\ll \lambda$ to capture the diffuse transition zone. Nevertheless, such a fine uniform mesh would make the computation of the phase field and the effective parameters very expensive. Here we propose an adaptive micro-scale mesh with fine elements only in the diffuse transition zone of the phase field.

The mesh refinement strategy relies on an estimation of the evolution of the phase field. Here we use the fact that $\phi$ is essentially bounded by $0$ and $1$ a.e. and that the large changes in the gradient of $\phi$ are encountered in the transition zone. Nevertheless, other methods or refinement criteria can be used without modifying the whole strategy.

Here the local mesh adaptivity is divided into three main steps: prediction - projection - correction. This strategy is an extension of the predictor-corrector algorithm proposed in \cite{heister2015primal} and by construction, our strategy avoids nonconforming meshes.

For a fixed time $n>0$, consider a micro-scale domain $Y$ and let $\phi^{n\!-\!1}$ be given over a mesh $\trian_h^{n\!-\!1}$. The mesh $\trian_h^{n\!-\!1}$ is "optimal" in the sense that it is fine only in the diffuse transition zone of $\phi^{n\!-\!1}$. Take also an auxiliary coarse mesh $\trian_{c}$, which is uniform with mesh size $h_{\text{max}}\gg \lambda$. 
\begin{description}
	\item[Prediction.] Given the mesh $\trian_h^{n\!-\!1}$ compute a first approximation to the solution of problem $(\mathrm{P}^{\mu, 1}_{\phi})$. We call this approximation the auxiliary solution $\phi^{n\ast}_1$. 
	Project the solution $\phi^{n\ast}_1$ on the coarse mesh $\trian_{c}$. The elements marked to be refined are $T_\mu \in \trian_{c}$ such that \[ \theta_r\lambda \leq \phi^{n\ast}_1|_{T_\mu} \leq 1-\theta_r\lambda   \]
	for some constant $0<\theta_{r}<\frac{1}{2\lambda}$. After marking the triangles, we refine the mesh in the selected zone. The refinement process is repeated until the smallest element is such that $h_{T_\mu} \leq h_{min} \ll \lambda$. The result is a refined mesh $\trian_h^{n\ast}$ that is fine enough at the predicted transition zone of the phase field $\phi^{n\ast}_1$.
	
	\item[Projection] Create a projection mesh $\trian_{r}$ that is the union of the previous mesh and the predicted mesh. The mesh $\trian_{r}= \trian_h^{n\!-\!1}\cup\trian_h^{n\ast}$ is fine enough at the transition zone of $\phi^{n\!-\!1}$ and $\phi^{n\ast}_1$. To properly describe the interface of both $\phi^{n\!-\!1}$ and $\phi^{n\ast}_1$ we project the previous solution $\phi^{n\!-\!1}$ over $\trian_{r}$.
	
	\item[Correction] Given the mesh $\trian_{r}$ and the projection of $\phi^{n\!-\!1}$ compute once more the solution of problem $(\mathrm{P}^{\mu, 1}_{\phi})$. The projection of this result over the mesh $\trian_h^{n}$ corresponds to the solution $\phi^{n}_1$. 
\end{description}
This process is necessary at every time step and every micro-scale domain but we perform the mesh refinement only in the first iteration of the coupled scheme. However, this procedure could be extended for further iterations.
Notice that higher values of the parameter $\theta_r$ lead to coarser meshes and less error control. We will illustrate the role of $\theta_r$ in \Cref{sec:numeric}. 

In \Cref{fig:sketchmicro} we sketch the prediction-projection-correction strategy by zooming in on the transition zone of a phase field. There the mineral is shrinking from the time $n\!-\!1$ to $n$. In \Cref{fig:sketchmicro} (a) and (d) we mark the center of the transition zone of the auxiliary solution $\phi^{n\ast}_1$ and the corrected solution $\phi^{n}_1$, and we see how the mesh follows the transition zone of the phase field.

\begin{figure}[htpb!]
	\centering
	\subfloat[ ]{\includegraphics[width=0.2\textwidth]{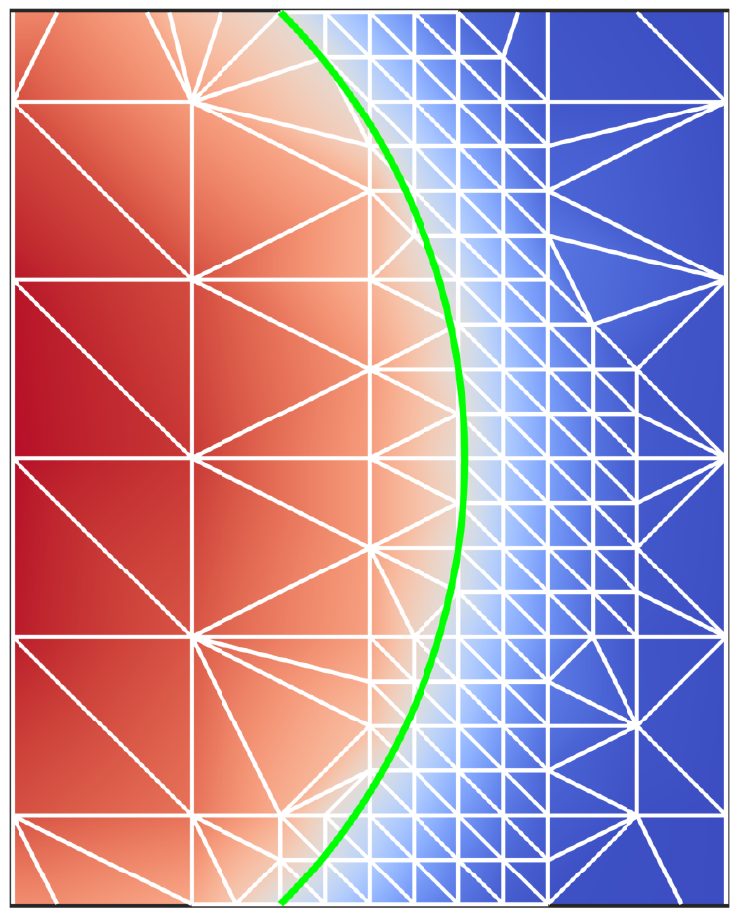}}\hspace{0.2cm}
	\subfloat[ ]{\includegraphics[width=0.2\textwidth]{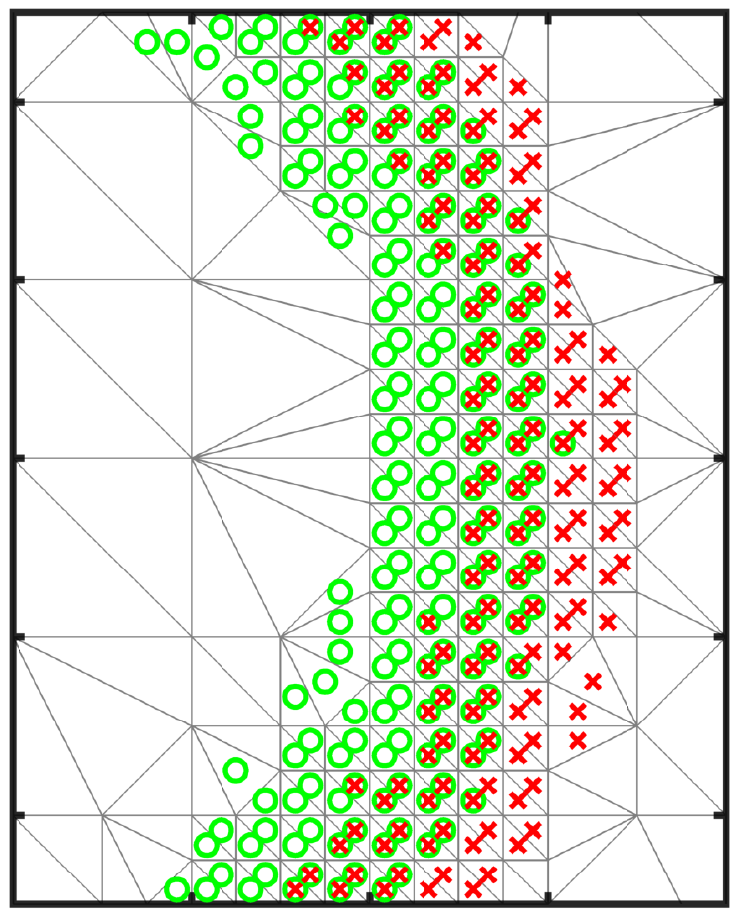}}\hspace{0.2cm}
	\subfloat[ ]{\includegraphics[width=0.2\textwidth]{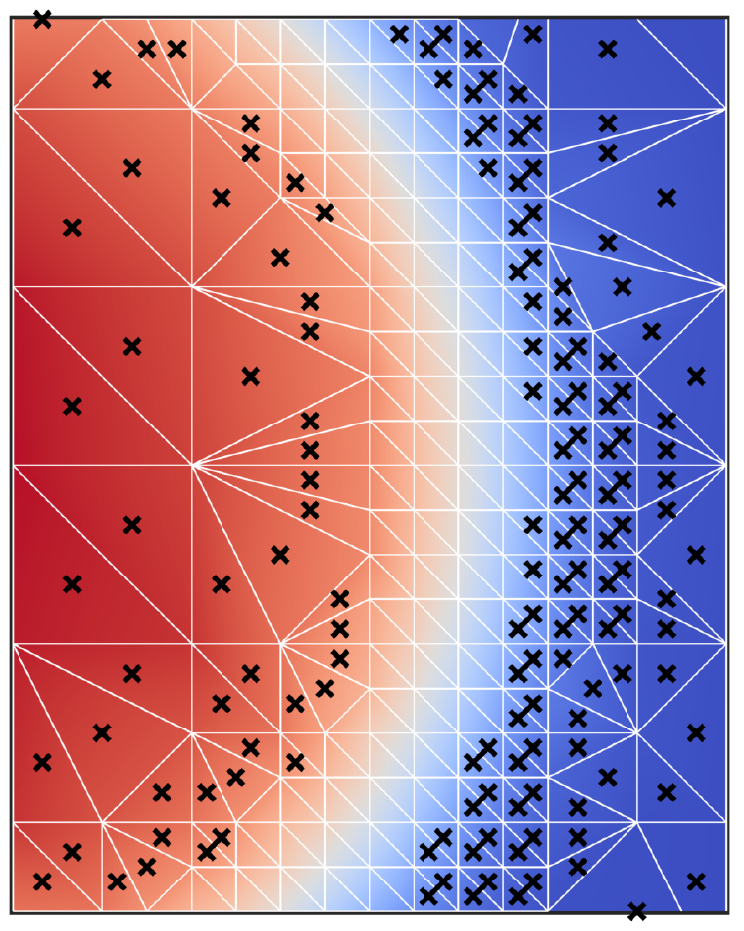}}\hspace{0.2cm}
	\subfloat[ ]{\includegraphics[width=0.2\textwidth]{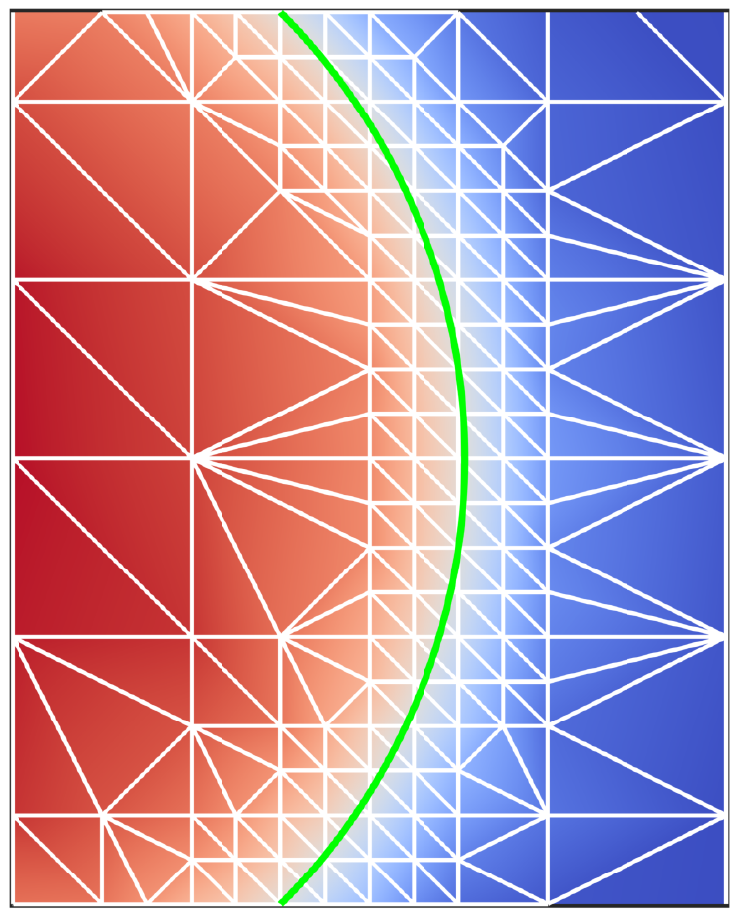}}\hspace{0.2cm}
	\vspace{-0.35cm}
	\captionof{figure}{Prediction- projection - correction strategy. (a) The auxiliary solution $\phi^{n\ast}_1$ over the mesh $\trian_h^{n\!-\!1}$ and the (green) line marks where $\phi^{n\ast}_1=0.5$ indicating the center location of the predicted transition zone. (b) The auxiliary mesh $\trian_r$ and the triangles that belong to the transition zone of $\phi^{n\!-\!1}$ (\textcolor{red}{$\boldsymbol{\times}$}) and $\phi^{n\ast}_1$ (\textcolor{green}{$\boldsymbol{\circ}$}). (c) The solution of problem $(\mathrm{P}^{\mu, 1}_{\phi})$ over $\trian_{r}$ and the elements outside of the transition zone ({$\boldsymbol{\times}$}). (d) The solution $\phi^{n}_1$ over the optimal mesh $\trian_h^{n}$ and the (green) line marks where $\phi^{n}_1=0.5$ indicating the center location of the transition zone.}
	\label{fig:sketchmicro}
\end{figure}

\subsection{The macro-scale adaptivity}

The computations on the micro scale can be optimized by the mesh adaptivity discussed before and the micro-scale cell problems can be computed in parallel. Nevertheless, it is demanding to compute the micro-scale quantities at every element (or integration point) of the macro-scale mesh. Here, the scale separation allows us to solve the model adaptively in the sense of the strategy introduced in \cite{redeker2013fast} and further studied in \cite{redeker2016upscaling}. There the macro-scale adaptivity uses only the solute concentration to locate where the micro-scale features need to be recalculated. Here
we implement a modified adaptive strategy on the micro scale that depends on the solute concentration and the phase-field evolution. With this, we extend the method in \cite{redeker2013fast} to more general settings, including heterogeneous macro-scale domains. 

To be more precise, we define the metric $d_E$ such that it measures the distance of two macro-scale points $\xnn_1, \xnn_2\in \Omega$ in terms of the solute concentration and the phase-field evolution, i.e.
\begin{equation*}
d_E(\xnn_1, \xnn_2;t;\varLambda) := \int_0^t e^{-\varLambda(t-s)}\left( d_u(\xnn_1, \xnn_2;s) +\int_Y d_{\phi}(\xnn_1, \xnn_2, \ynn;s) d\ynn \right) ds.
\end{equation*}
Here $d_u$ and $d_{\phi}$ are defined as follows \[d_u(\xnn_1, \xnn_2;s) := |u(\xnn_1, s)-u(\xnn_2, s)| \text{ and } d_{\phi}(\xnn_1, \xnn_2, \ynn;s) := |\phi(\xnn_1, \ynn, s)-\phi(\xnn_2, \ynn, s)|,\] and $\varLambda \geq 0$ is a history parameter. In the discrete setting we calculate the distance $d_E$ recursively, i.e. 
\begin{align*}
&d_E(\xnn_1, \xnn_2;n\dt;\varLambda) \approx e^{-\varLambda \dt}d_E(\xnn_1, \xnn_2;(n\!-\!1)\dt;\varLambda) \\
&\qquad + \dt \left( d_u(\xnn_1, \xnn_2;n\dt) +\int_Y d_{\phi}(\xnn_1, \xnn_2, \ynn;n\dt) d\ynn \right).
\end{align*}
The spatial integrals are also calculated numerically depending on the spatial discretization. 

At each time $n\geq0$ we divide the set of macro-scale points (elements) into a set of \textit{active} points ($N_A(n)$) and a set of \textit{inactive} points ($N_I(n)$). Specifically, $N_{\text{Total}} = N_A(n) \cup N_I(n)$ and $N_A(n) \cap N_I(n) = \emptyset$ for all $n\geq0$. 

The micro-scale cell problems will only be solved for points that are active. In this way, the effective parameters and the porosity are updated only in such points. For the inactive point, the effective parameters and the porosity are updated by using the \textit{Copy method} described in \cite{redeker2016upscaling} and explained below. 

Let $0\leq C_r$, $C_c<1$ be given and define the refinement and coarsening tolerances as follows
\begin{equation*}
\textit{tol}_r(t) := C_r \cdot \max\limits_{\xnn_1, \xnn_2\in \Omega} \left\lbrace d_E(\xnn_1, \xnn_2;t;\varLambda) \right\rbrace \quad \text{and} \quad \textit{tol}_c(t) := C_c\cdot\textit{tol}_r(t).
\end{equation*}

For $n>0$ and on the first multi-scale iteration, i.e. before the iterative process, the solutions $u^{n-1}(\xnn)$ and $\phi^{n-1}(\xnn,\ynn)$ for all $\xnn\in \Omega$ and $\ynn \in Y$ are given. The adaptive process consists of the following steps
\begin{itemize}	
	
	\item Initially, for $n=0$ all the points are inactive, i.e. $N_A(0) = \emptyset$ and $N_I(0) = N_{\text{Total}}$.
	\item Update the set of active points $N_A(n)$ and $N_I(n)$.
	\begin{itemize}
		\item Set $N_A(n) =N_A(n\!-\!1)$ and $N_I(n) =N_I(n\!-\!1)$. For each active point $\xnn_A \in N_A(n)$ repeat the following: if there exists another active node $\xnn_B\in N_A(n)$ such that $d_E(\xnn_A, \xnn_B;(n\!-\!1)\dt;\varLambda)<\textit{tol}_c$, then $\xnn_A$ is deactivated, i.e. $\xnn_A \in N_I(n)$. Otherwise, $\xnn_A \in N_A(n)$.
		
		\item For each inactive point $\xnn_I \in N_I(n)$ repeat the following: if $N_A(n) = \emptyset$  the point $\xnn_I$ is activated. Otherwise, calculate the distance to all the active nodes.
		If $\min\limits_{\xnn_A\in N_A(n)} \left\lbrace d_E(\xnn_I, \xnn_A;(n\!-\!1)\dt;\varLambda) \right\rbrace > \textit{tol}_r$ then the point $\xnn_I$ is activated, i.e., $\xnn_I \in N_A(n)$. 
	\end{itemize}
	
	\item Associate all the inactive points to the most similar active point. In other words, an inactive point $\xnn_I \in N_I(n)$ is associated with $\xnn_A \in N_A(n)$ if \\ $\xnn_A =  \underset{\xnn\in N_A(n)}{\mathrm{argmin}}\left\lbrace d_E(\xnn_I, \xnn;(n\!-\!1)\dt;\varLambda)\right\rbrace $.	
\end{itemize}

After updating the sets of active and inactive points we use the multi-scale iterations to solve the micro- and macro-scale problems. At each multi-scale iteration ($i>0$) we solve \eqref{eq:microphase_i}, \eqref{eq:microA} (and \eqref{eq:microK}) and transfer the solutions $\phi^{n}_i$, $\aef^{n}_i$ (and $\kef^{n}_i$) from the active points to their associated inactive ones. We then solve the macro-scale problem  \eqref{eq:macro2_i} (and \eqref{eq:macro1_i}) and continue the multi-scale iterations until convergence.

The two tolerances $\textit{tol}_r$ and $\textit{tol}_c$ are controlled through the values of $C_r$ and $C_c$. For a fixed value of $C_r$ the role of $C_c$ is to control the upper bound for the distance between active points. In other words, higher values of $C_c$ imply that more active points in $N_A(n\!-\!1)$ remain active in $N_A(n)$. On the other hand, for a fixed value of $C_c$ the role of $C_r$ is to control the upper bound for the distance between active and inactive points. Namely, higher values of $C_r$ imply that less inactive points in $N_I(n)$ become active. In accordance with \cite{redeker2013fast} and to avoid a complete update of the set of active nodes, it is wise to use smaller values for $\textit{tol}_c$ than for $\textit{tol}_r$. Therefore, in \Cref{sec:ex1} we analyse the role of $C_r$ in the macro-scale error control when $C_c$ is fixed and is chosen to be small.

\subsection{The multi-scale adaptive algorithm}
We combine the multi-scale iterative scheme and the adaptive strategies in a simple algorithm, see \textbf{Algorithm 1}. Even though we showed the convergence of the multi-scale iterative scheme in a simplified setting disregarding the flow, we mention the solution of the effective permeability $\kef^n_i$ and the flow problem \eqref{eq:macro1_i} in \textbf{Algorithm 1}. The reason for this is that in the numerical tests, specifically in \Cref{sec:ex2}, we evidence that the iterative scheme also converges in the complete scenario.

\begin{algorithm}[H]
	\SetAlgoLined
	\KwResult{Concentration $u$, porosity $\bphi$ (and pressure $p$).}
	Given the initial conditions $u_I$ and $\phi_I$
	
	\For{time $t^n$}{
		Adjust the set $N_A(n)$ of the active macro-scale points
		
		Take $i=1$ and $u^{n}_0=u^{n\!-\!1}$
		
		\While{$\epsilon_M^{n, i}\geq \textit{tol}_M$}{
			\For{$\xnn\in N_A(n)$}{
				\If{i==1}{
					Adaptivity on the micro-scale meshes
				}
				Solve \eqref{eq:microphase_i} using the L-scheme until $\epsilon_\mu^{n, i, j} \leq \textit{tol}_\mu$
				
				Compute the effective matrix $\aef^n_i$ (and $ \kef^n_i$)
			}
			For $\xnn\in N_I(n)$ copy the solution from the nearest $\xnn\in N_A(n)$
			
			Solve the problem \eqref{eq:macro2_i} (and \eqref{eq:macro1_i})
			
			Next iteration $i=i+1$
		}
		Next time $n=n+1$
	}
	\caption{The multi-scale iterative scheme using adaptive strategies on both scales}
\end{algorithm}

\section{The numerical results}
\label{sec:numeric}
In this section, we present two numerical tests for the multi-scale iterative scheme. We restrict our implementations to the 2D case and all parameters specified in the following examples are non-dimensional according to the non-dimensionalization in \cite{bringedal2019phase}.

For the first test, in \Cref{sec:ex1} we use a simple setting where the performance of the multi-scale adaptive techniques are investigated. In \Cref{sec:ex2} we analyse an anisotropic and heterogeneous case where different shapes of the initial phase field are used. The numerical solutions of macro- and micro-scale problems \eqref{eq:macro1}, \eqref{eq:macro2}, \eqref{eq:microphase} and \eqref{eq:microA} are computed using the lowest order Raviart-Thomas elements (see \cite{bahriawati_three_2005}). For the micro-scale problems \eqref{eq:microK} we use the Crouzeix–Raviart elements (see \cite[Section~8.6.2]{boffi2013mixed}). The following (non-dimensional) constants have been used in all the simulations
\begin{equation}\label{eq:param}
D=1; \quad u^\star=1; \quad u_{\text{eq}}=0.5;\quad \gamma=0.01;\quad \lambda = 0.08; \quad \delta=1\text{E-4}.
\end{equation}

\subsection{Test case 1. Circular shaped phase field}
\label{sec:ex1}
Consider the macro-scale domain $\Omega = \left( 0, 1\right) \times \left( 0, \frac{1}{2}\right) $ and take $\ttime=0.25$. The system is initially in equilibrium, i.e. the initial concentration is $u(\xnn, 0)=u_{\text{eq}}$ and $p(\xnn, 0)=0$ for all $\xnn\in \Omega$. A dissolution process is triggered by having a fixed concentration $u=0$ in the lower-left corner of the domain $\Omega$. We take homogeneous Neumann boundary conditions everywhere else for both the solute concentration and pressure problems. At every micro-scale domain $Y$ the initial phase field $\phi_I$ has a circular shape with initial porosity $\bphi_0=0.5$. This configuration is displayed in \Cref{geoex1}. We allow the mineral to dissolve until a maximum porosity $\bphi_M = 0.9686$ is reached.

For the time discretization, even though \Cref{te:teorema}  gives a theoretical restriction on $\dt$, the estimation of an accurate bound is not evident. Here we choose $\dt$ experimentally by choosing an initial value of $\dt$ which is sufficiently small to ensure convergence of the micro-scale non-linear solver (see \Cref{te:teoremamicro}). If the multi-scale iterations converge in the first time step, this value of $\dt$ is used in the whole simulation. Otherwise, smaller values of $\dt$ are tested. Here the time step is chosen to be $\dt=0.01$, which was found to always ensure convergence in these tests.

\begin{figure}[htpb!]
	\centering
	\includegraphics[width=0.42\textwidth]{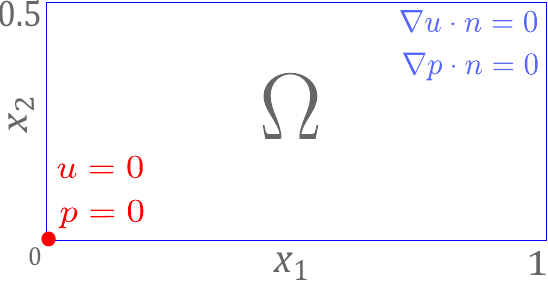}\hspace{1.5cm}
	\includegraphics[width=0.30\textwidth]{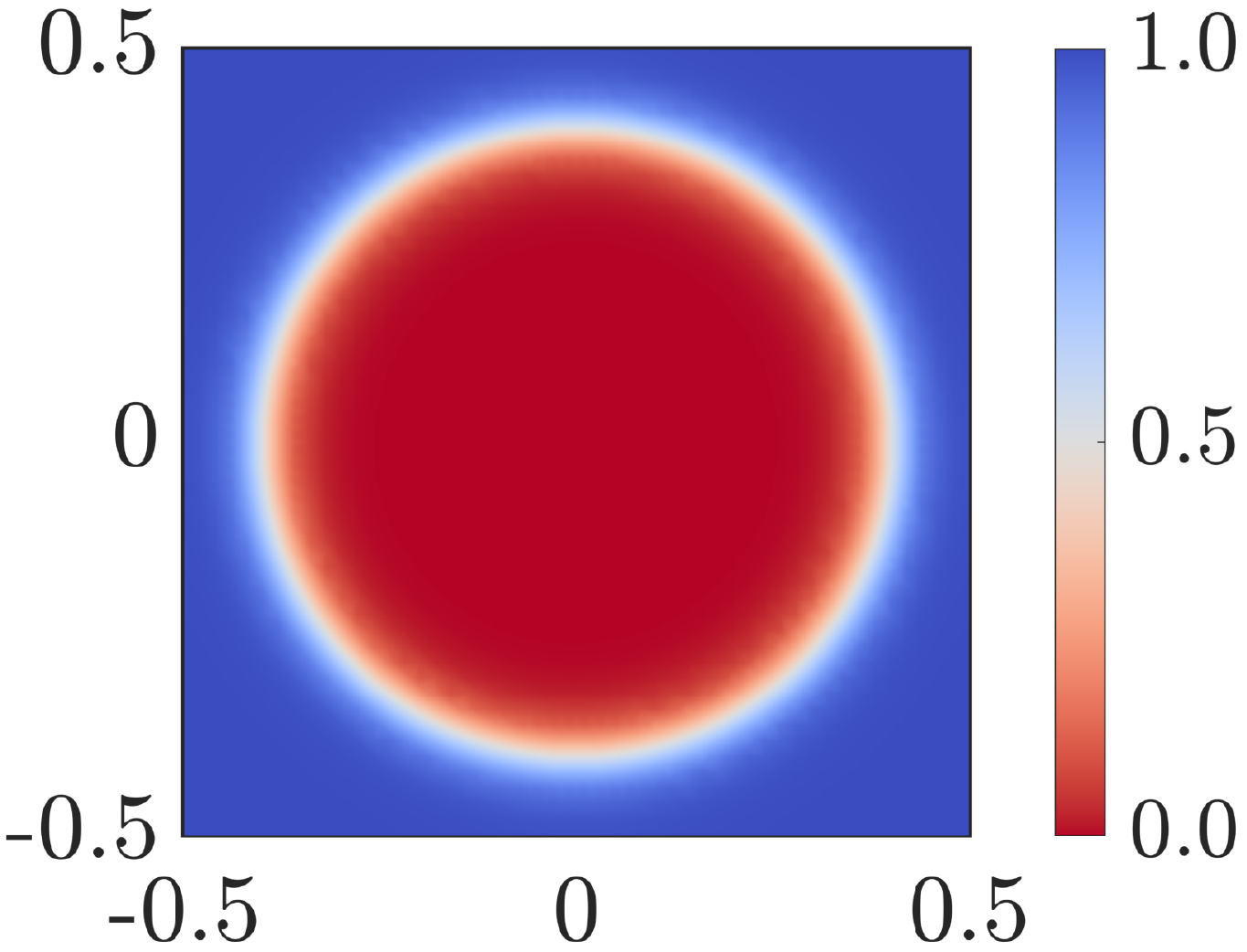}
	\captionof{figure}{The configuration of the macro scale (left) and phase-field initial condition (right) - Test case 1.}\label{geoex1}
\end{figure}

\subsubsection{The micro-scale non-linear solver and adaptivity}
To study the features of the micro-scale non-linear solver and the micro-scale refinement strategy, we look closer on the micro-scale domain $Y$ corresponding to the macro-scale location $\xnn=(0, 0)$ with an initial phase field as shown in \Cref{geoex1} and a constant concentration $u=0$.

Concerning the behavior of the micro-scale non-linear solver, we take dynamically the value of the linearization parameter $\llin= \max\left(|2\lambda f(u) + 8\gamma|,|2\lambda f(u) - 8\gamma|\right)$, which changes at every multi-scale iteration if the solute concentration $u$ changes. This choice of $\llin$ gives convergence of the micro-scale iterations, as shown in \cite{pop2004mixed}. We use this choice of $\llin$ in all the simulations below as well as the micro-scale stopping criterion $\textit{tol}_\mu =1$E$-8$. We choose $\textit{tol}_\mu$ so small to ensure sufficient accuracy of the micro-scale problems and to not influence the multi-scale convergence. 
For all the micro-scale meshes used in \Cref{table:micro-scalerefinement} the average number of micro-scale iterations is $13$. Here we do not iterate between scales and we choose $\lstab=0$ having no effect on the convergence of the non-linear solver. 

In \Cref{fig:microscale_Ex1} we show the phase field at time $t^n=0.10$. On each micro-scale domain $Y$ we use an initial uniform mesh with $200$ elements and apply three different values for the mesh refinement parameter, namely $\theta_r = 1, \, 2, $ and $5$.
\begin{figure}[htpb!]
	\centering
	\includegraphics[width=0.32\textwidth]{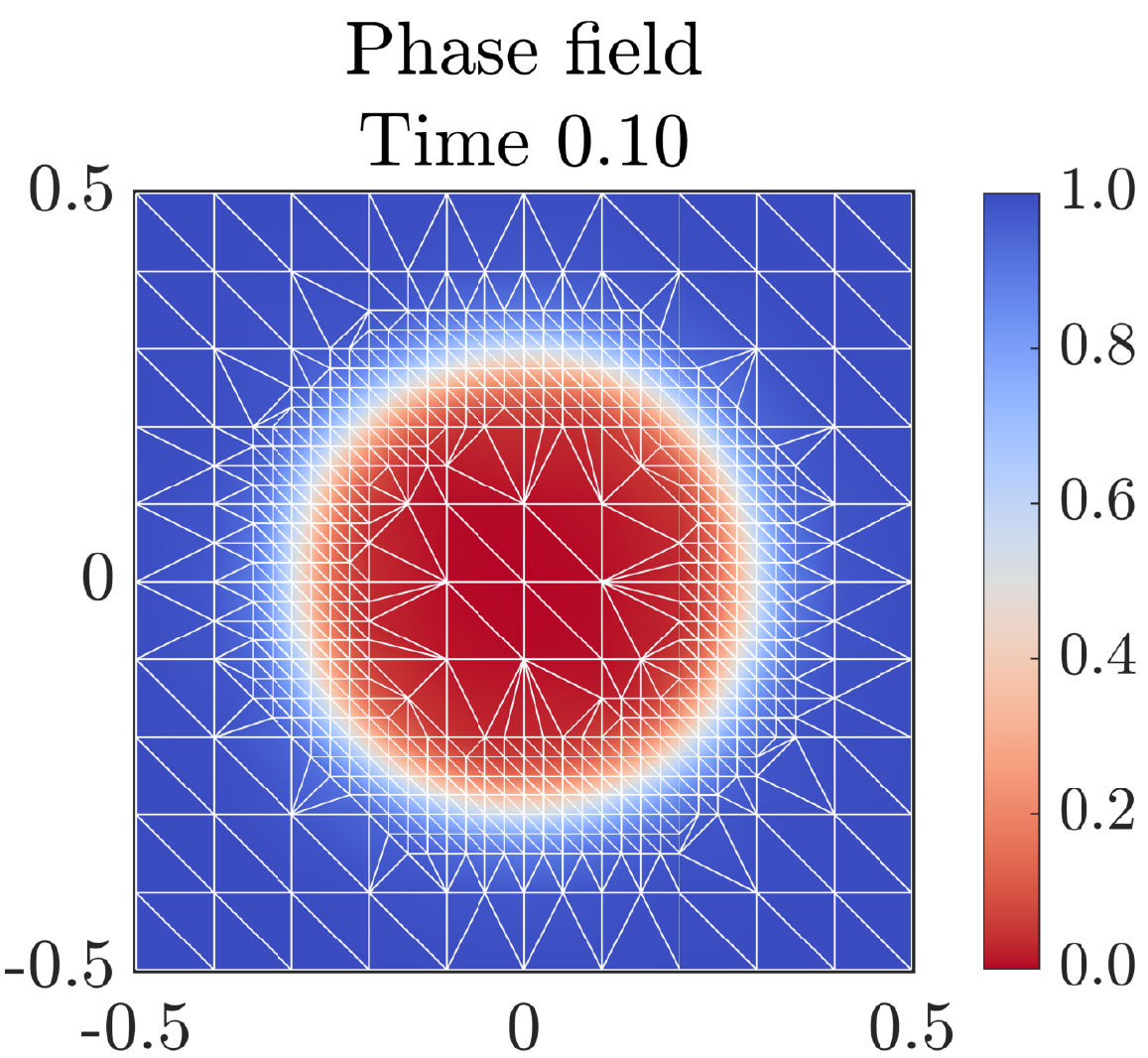}
	\includegraphics[width=0.32\textwidth]{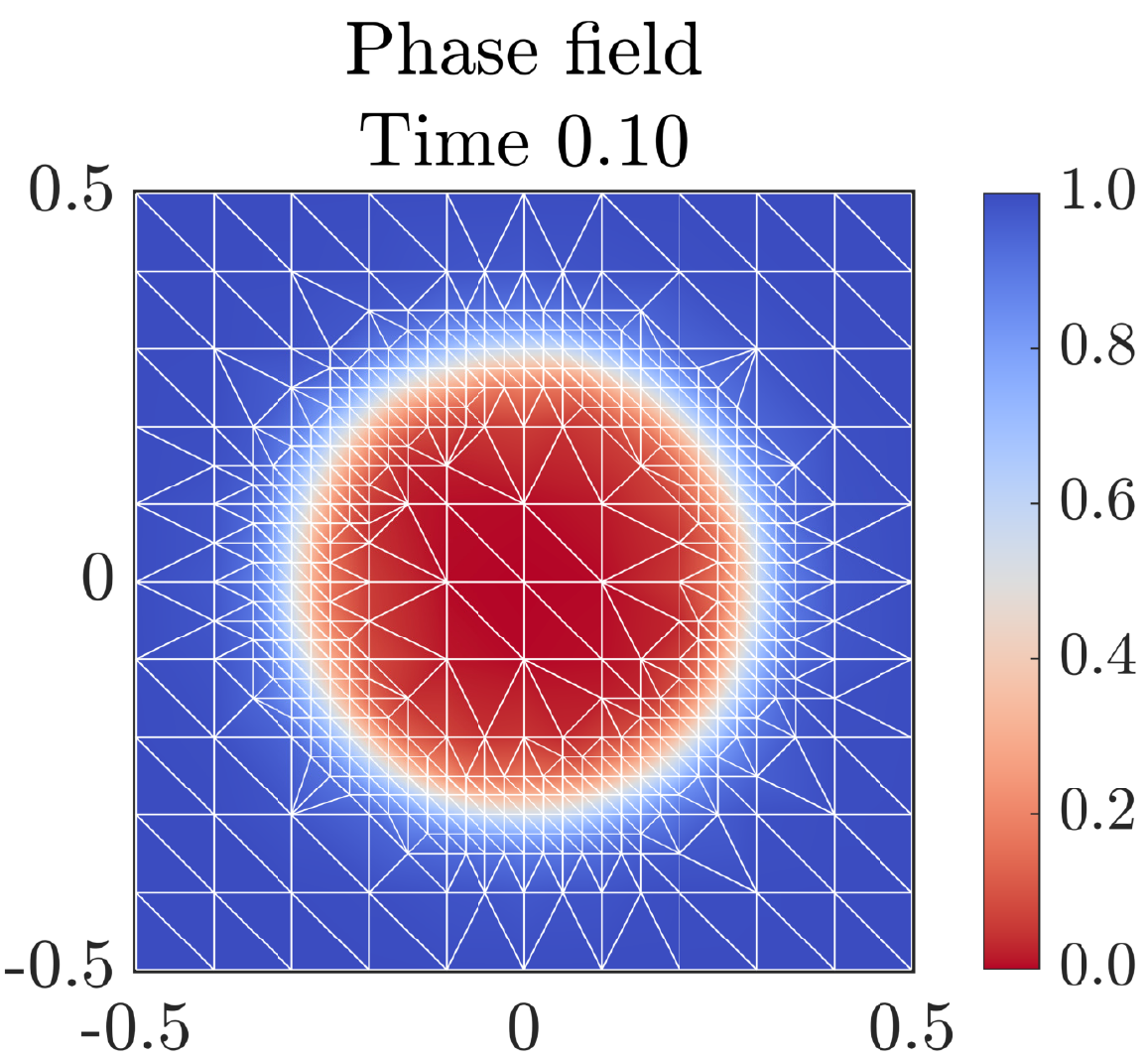}
	\includegraphics[width=0.32\textwidth]{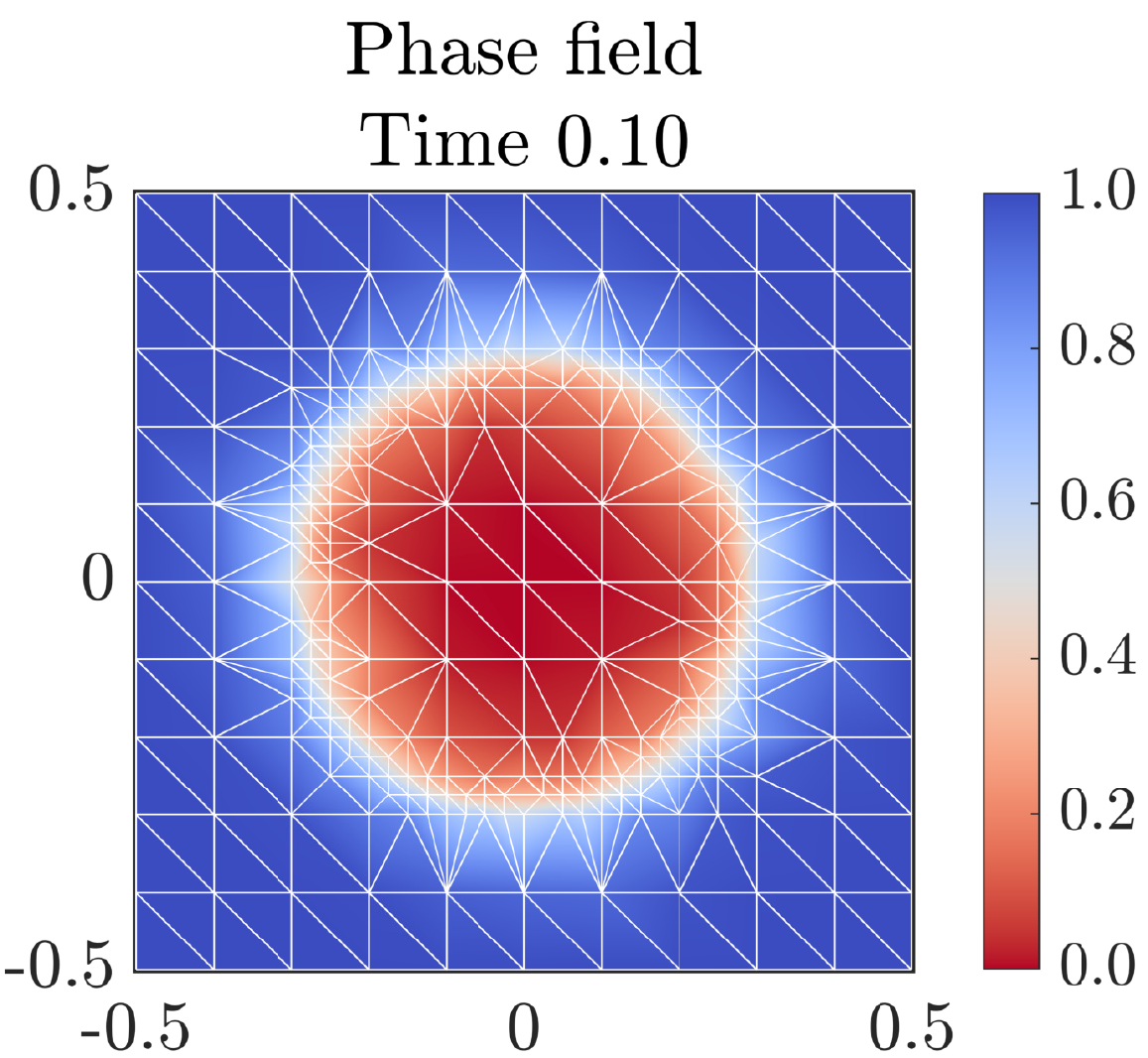}
	\captionof{figure}{The phase field $\phi^n(\xnn)$ corresponding to the macro-scale location $\xnn=(0, 0)$ at the time $t^n = 0.10$. Refinement parameters $\theta_r = 1, \, 2, $ and $5$ (left to right).}
	\label{fig:microscale_Ex1}
\end{figure}

It is clear that the micro-scale refinement parameter slightly changes the representation of the phase-field transition zone. This result is also evident in \Cref{table:micro-scalerefinement}. There we show a comparison between the micro-scale solutions when using different values of $\theta_r$ and the reference solution $\phi_{\text{ref}}$. We use a fixed uniform mesh with $7.20$E+$3$ elements and mesh size $h= 2.36$E-$2\ll \lambda$ to compute the reference solution $\phi_{\text{ref}}$. In \Cref{table:micro-scalerefinement} we report the average number of elements for each micro-scale mesh (\#Elements) and there the accuracy of the numerical solution is provided through the $L^2$-error, namely $E_{\phi} := \| \phi_{\text{ref}} - P_h(\phi)\|_{L^2([0, \ttime];L^2(Y))}$ with $P_h(\phi)$ being the projection of the solution $\phi$ over the reference mesh. 

All the meshes in \Cref{fig:microscale_Ex1} and \Cref{table:micro-scalerefinement} are constructed such that the minimum diameter in the mesh is $h_{T_\mu} \leq h_{min} = \frac{\lambda}{3}$. In \Cref{fig:microscale_Ex1}, the length of the smallest edge in the meshes is $\min\limits_{T_\mu\in\trian_{h}}h_{T_\mu} = 2.50$E-$2$ and the length of the largest edge (located far from the transition zone) is $h_{\text{max}} = 1.41$E-$1$.
\begin{table}[htpb!]
	\centering
	\renewcommand{\arraystretch}{1}
	\begin{tabular}{c|c|c|c|c}
		\hline
		$\theta_r$ & \#Elements & \%\#Elements & $E_{\phi}$ & \%$E_{\phi}$ \\
		\hline
		$0.5$ & $1.20$E+$3$ & $16.72$\% & $9.69$E-$3$ & $2.27$\%  \\
		$1$ & $1.04$E+$3$ & $14.51$\% & $1.01$E-$2$ & $2.37$\%  \\
		$2$ & $8.64$E+$2$ & $12.00$\% & $1.19$E-$2$ & $2.79$\%  \\
		$5$ & $5.60$E+$2$ & $7.77$\%  & $1.99$E-$2$ & $4.68$\%  \\
		\hline
	\end{tabular}
	\caption{The micro-scale adaptive results for a varying refining parameter $\theta_r$. The column \%\#Elements corresponds to the percentage of the original number of elements used in each mesh and \%$E_{\phi}$ is the relative error compared to the reference solution.}
	\label{table:micro-scalerefinement}
\end{table}

Smaller values of $\theta_{r}$ lead to better error control, but those values also imply more degrees of freedom and therefore increase the computational effort. In the following numerical experiments, we choose $\theta_{r}=2$ to control the error on the micro scale and, at the same time, limit the number of elements at each micro-scale domain.

\subsubsection{The multi-scale coupling and the macro-scale adaptivity}

We study the convergence of the multi-scale iterative scheme for different values of the parameter $\lstab$. In \Cref{te:teorema} the value of $\lstab$ is restricted to be $\lstab>6\mathfrak{M}$. Using the parameters in \eqref{eq:param} we obtain that $\mathfrak{M} \geq 1.12$. In \Cref{fig:Lstab} we compare the convergence of the multi-scale iterative scheme when using different values of $\lstab$. Specifically, in \Cref{fig:Lstab} we show the number of iterations used at the first time step for eleven different values of $\lstab$. It is evident that the conditions in \Cref{te:teorema} are rather restrictive and in practice, one can achieve convergence using smaller values of $\lstab\geq0$. For very small values of $\lstab$, the iterations needed in the multi-scale iterative scheme remain constant, which we highlight in \Cref{fig:Lstab}. Here we choose $\textit{tol}_M=1$E-$6$ for the multi-scale stopping criterion and we do not use the macro-scale adaptive strategy, i.e., we solve all the micro-scale problems. After this study, we choose $\lstab = 1$E$-4$ in all the simulations below.


\begin{figure}
	\centering
	\includegraphics[width=0.45\linewidth]{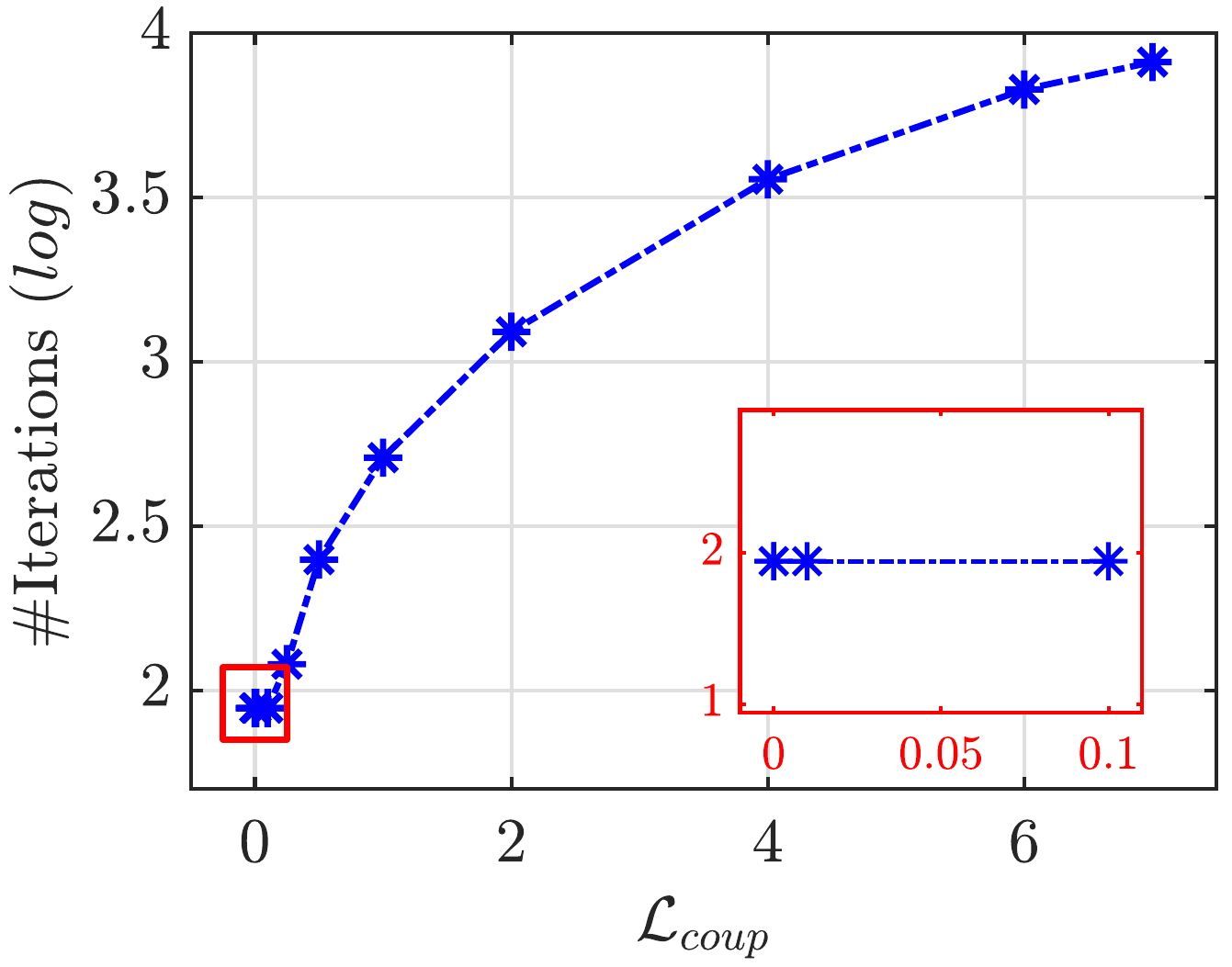}
	\caption{The number of multi-scale iterations ($log$) at time $t=0.01$ for different values of $\lstab$. Zoom in of the plot for small values of $\lstab$.}
	\label{fig:Lstab}
\end{figure}

In \Cref{fig:active_nodes} and \Cref{table:macro-scalerefinement} the results of the macro-scale adaptivity are shown. We choose the history parameter $\varLambda=0.1$ and the coarsening parameter $C_c=0.2$ based on the sensitivity analysis presented in \cite{redeker2013fast} and used in \cite{redeker2016upscaling}. \Cref{fig:active_nodes} illustrates the effect of the refinement parameter $C_r$ on the proportion of active nodes. There, the different intensities and sizes represent the percentage of the total number of times that each element was active during the whole simulation. 
\begin{figure}[htpb!]
	\centering
	\includegraphics[width=0.45\textwidth]{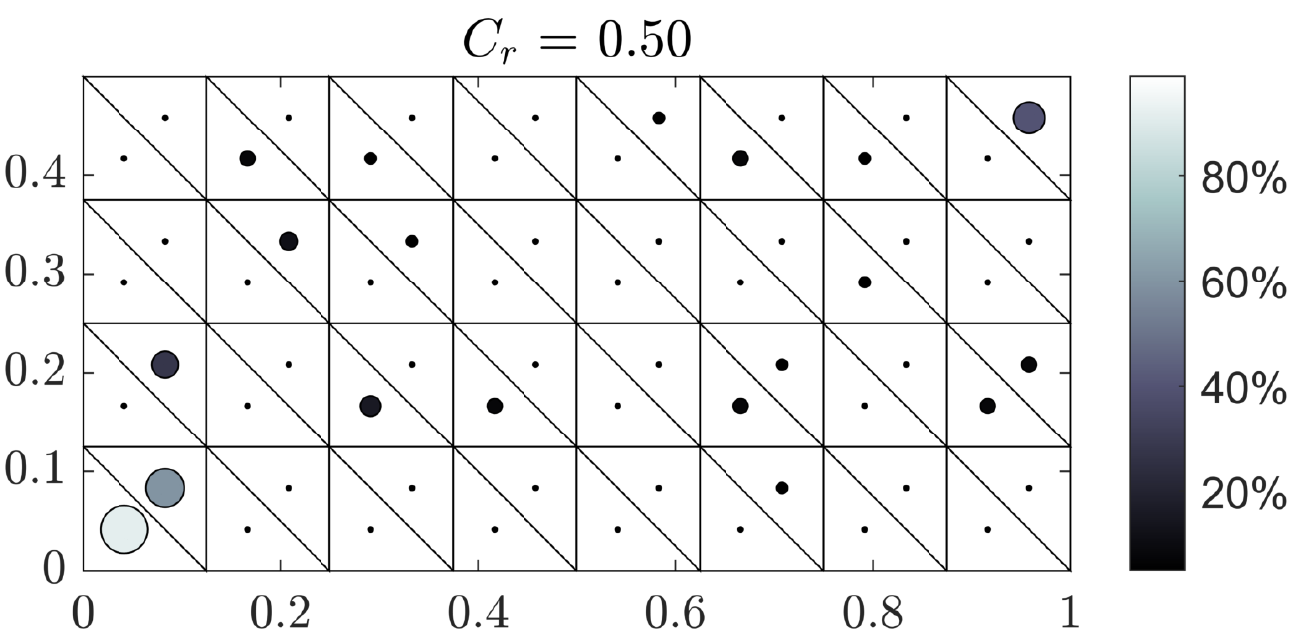}\hspace{0.5cm}
	\includegraphics[width=0.45\textwidth]{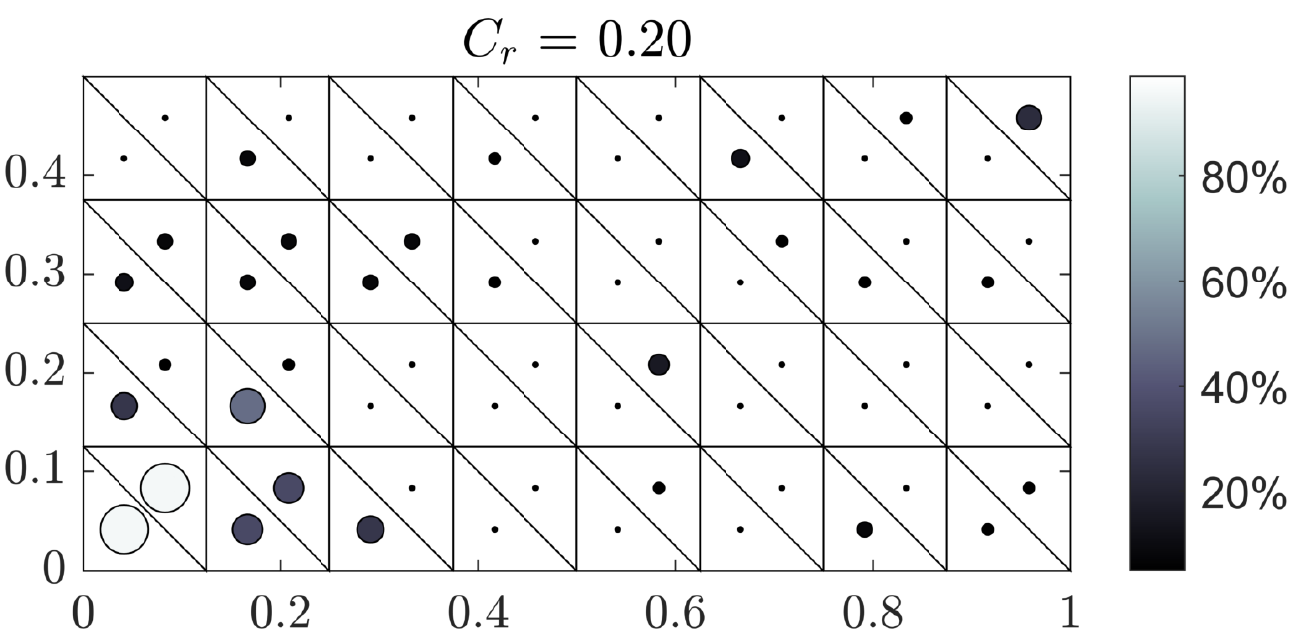}\\
	\includegraphics[width=0.45\textwidth]{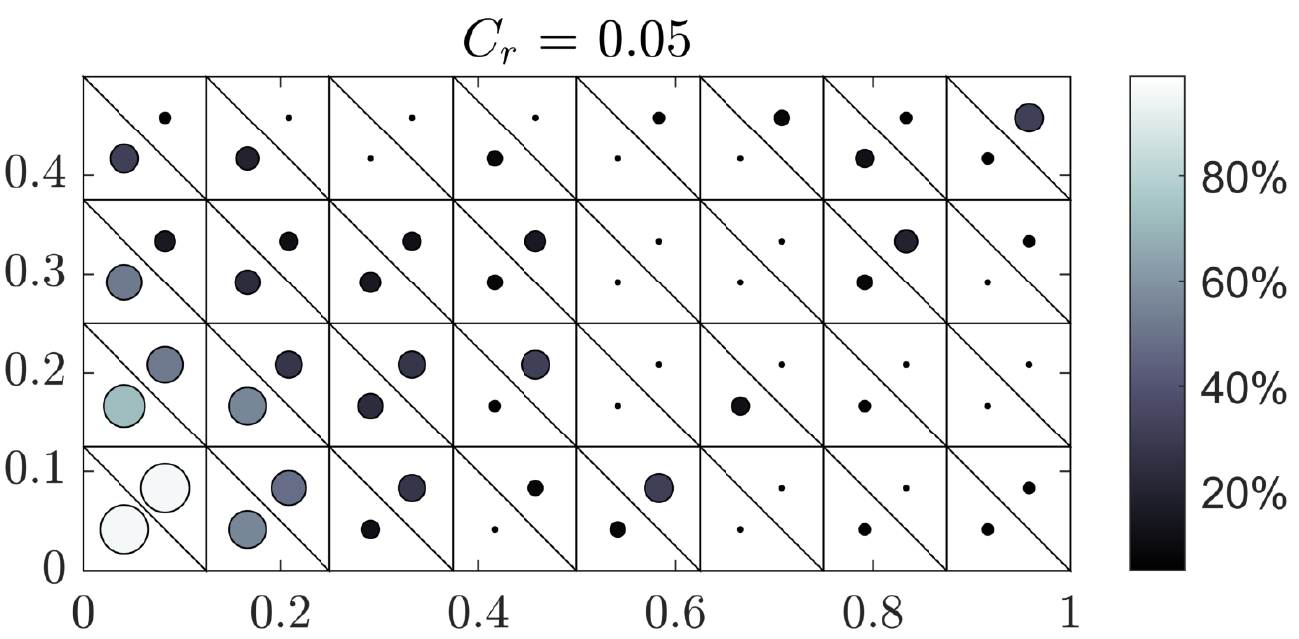}\hspace{0.5cm}
	\includegraphics[width=0.45\textwidth]{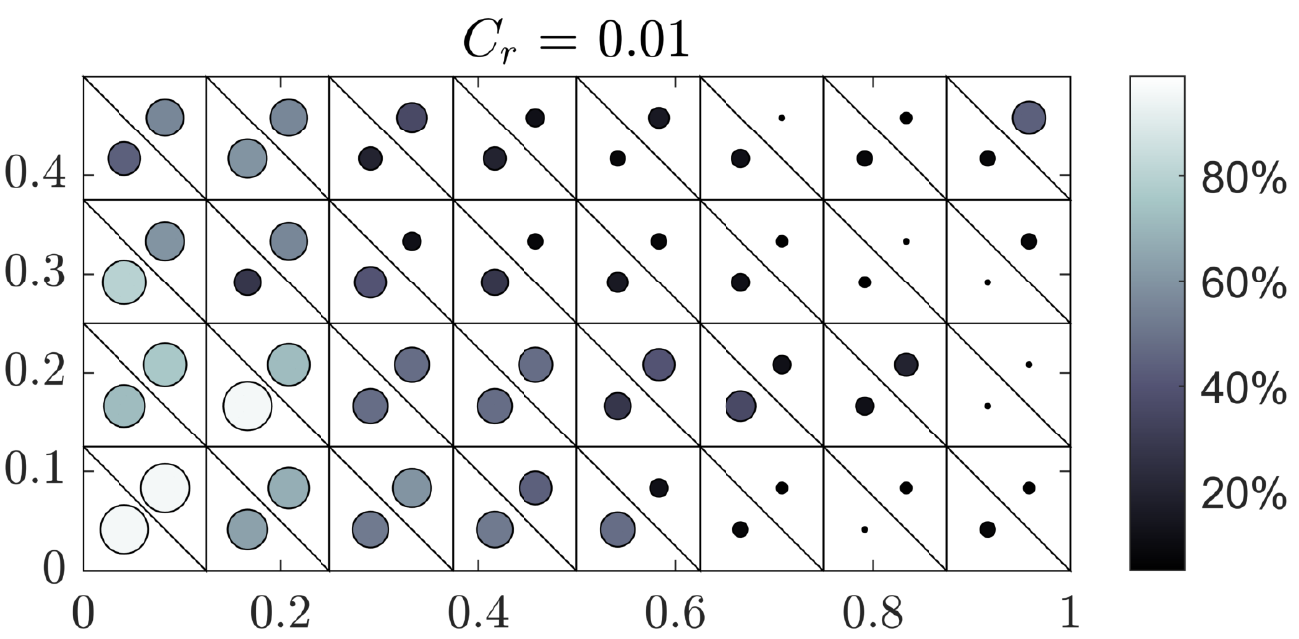}
	\captionof{figure}{The results of the macro-scale adaptive strategy for different values of the refinement parameter $C_r=0.5, \, 0.2, \, 0.05, $ and $0.01$. Different intensities and sizes indicate the percentage of times that each macro-scale element was active.}
	\label{fig:active_nodes}
\end{figure}

\begin{table}[htpb!]
	\centering
	\renewcommand{\arraystretch}{1}
	\begin{tabular}{c|c|c|c|c|c|c}
		\hline
		$C_r$ & \#Active & \%\#Active & $E_u$ & \%$E_u$ & $E_{\bphi}$ & \%$E_{\bphi}$ \\
		\hline
		$0.50$  &$82$  & $5.13$\%   & $8.26$E-$3$ & $5.23$\% & $2.00$E-$2$  & $10.16$\%  \\
		$0.20$  &$134$ & $8.38$\%   & $7.11$E-$3$ & $4.50$\% & $1.26$E-$2$  & $6.41$\%  \\
		$0.05$ & $257$ & $16.06$\% & $2.05$E-$3$ & $1.30$\% & $4.92$E-$3$  & $2.51$\%   \\
		$0.01$ & $512$ & $32.00$\% & $7.14$E-$4$ & $0.45$\% & $1.81$E-$3$  & $0.92$\%  \\
		\hline
	\end{tabular}
	\caption{The adaptive results for $\varLambda=0.1$, $C_c = 0.2$ and a varying refining parameter $C_r$. The columns \%\#Active, \%$E_u$ and \%$E_{\bphi}$ correspond to the average percentage of the original number of active elements used in each case and the relative errors with respect to the reference solution.}
	\label{table:macro-scalerefinement}
\end{table}
In \Cref{table:macro-scalerefinement} we analyse the effect of the macro-scale adaptive strategy on the $L^2$-error of the concentration and porosity. We call $u_{\text{ref}}$ and $\bphi_{\text{ref}}$ the solutions that corresponds to $C_r = 0$, i.e., the solutions of the test case without using the macro-scale adaptive strategy. The number of active nodes in the reference case is $1600$. 
\Cref{table:macro-scalerefinement} compares the following $L^2$-errors with the number of macro-scale active elements during the whole simulation
\begin{equation*}
E_u   := \| u_{\text{ref}} - u\|_{L^2([0, \ttime];L^2(\Omega))} \quad \text{and} \quad
E_{\bphi} := \| \bphi_{\text{ref}} - \bphi\|_{L^2([0, \ttime];L^2(\Omega))}.
\end{equation*} 
As expected and coinciding with \cite{redeker2013fast}, larger values of $C_r$ imply less error control. Nevertheless, when $C_r$ increases the computational cost of the simulations decreases and the convergence of the multi-scale iterative scheme is not affected.

Finally, we show the multi-scale results of the complete algorithm when using $\lstab =1$E-$4$ and $C_r = 0.05$. \Cref{fig:microscale_Ex1_sol} shows the evolution of the phase field corresponding to three different macro-scale locations. There we also show the corresponding micro-scale mesh that captures the movement of the phase-field transition zone.
\begin{figure}[htpb!]
	\centering
	\includegraphics[width=0.32\textwidth]{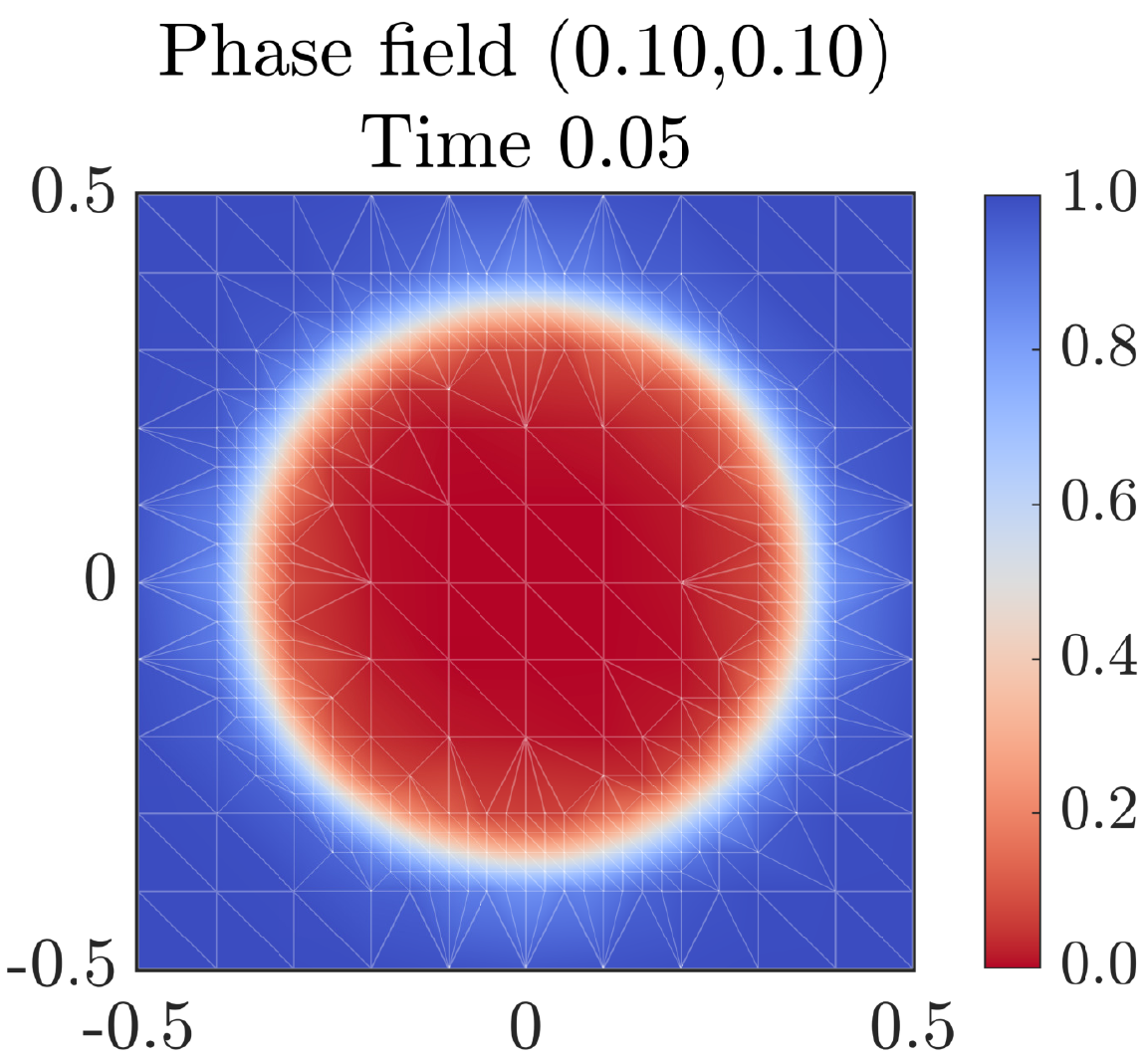}
	\includegraphics[width=0.32\textwidth]{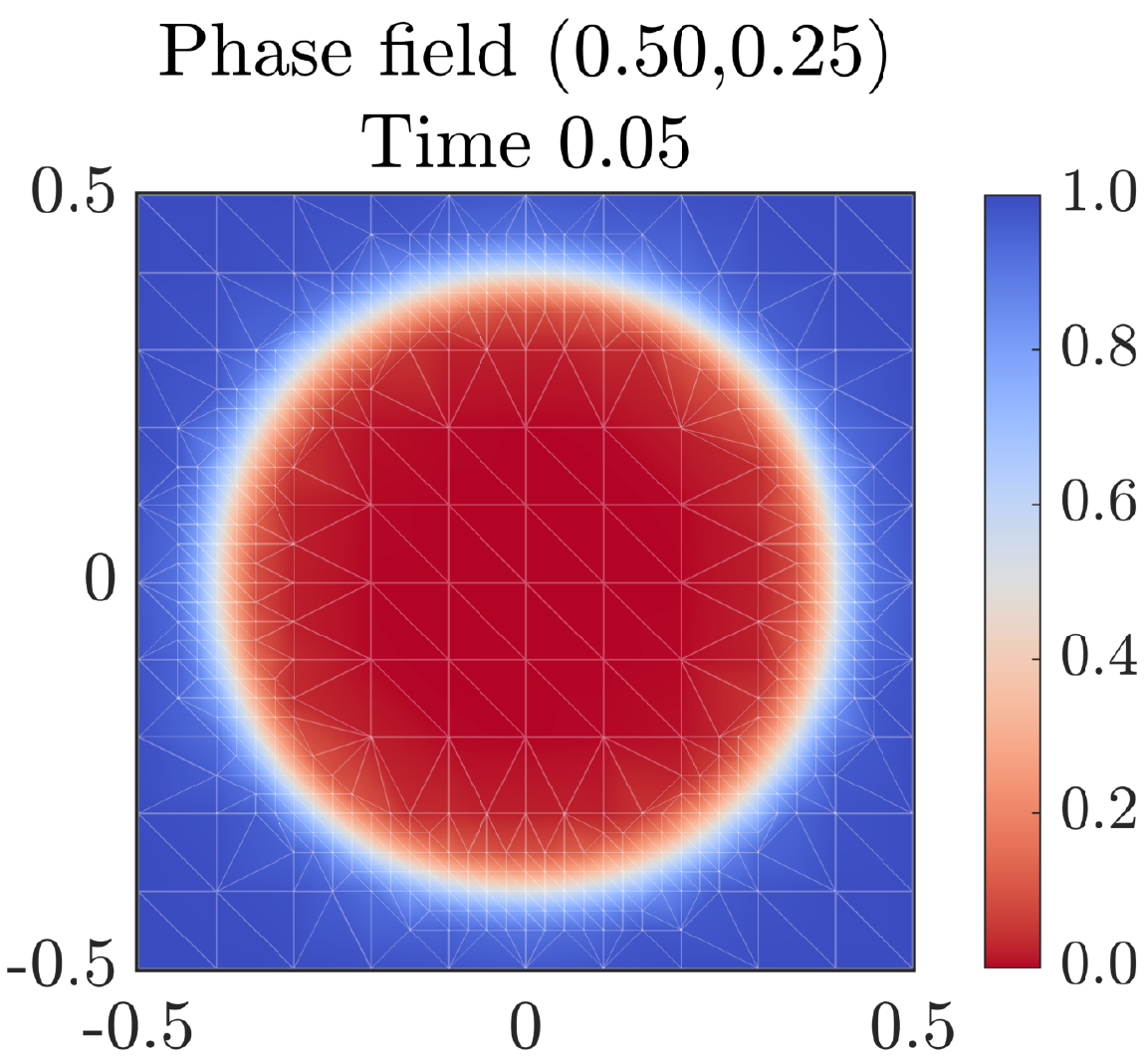}
	\includegraphics[width=0.32\textwidth]{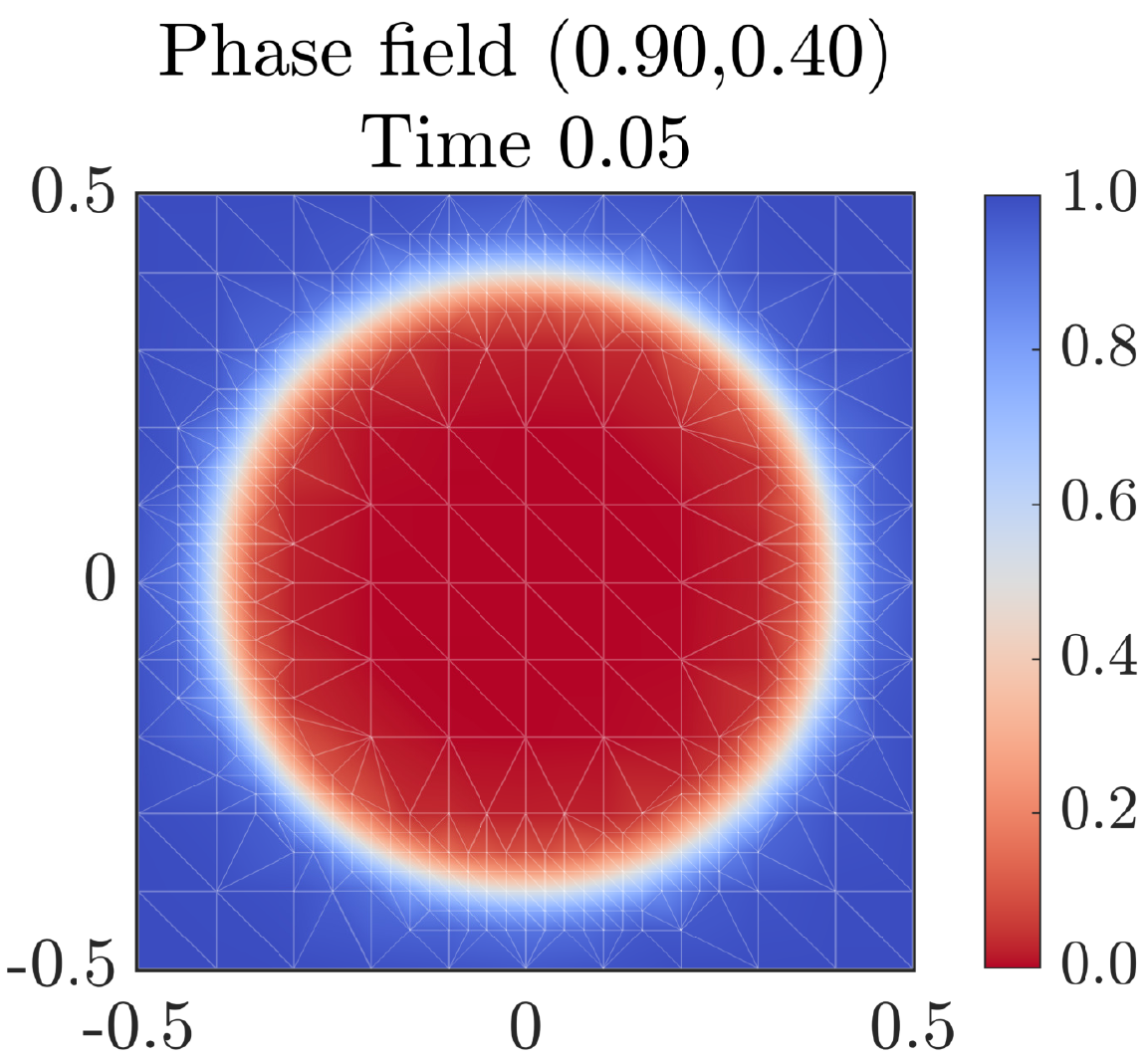}\\
	\includegraphics[width=0.32\textwidth]{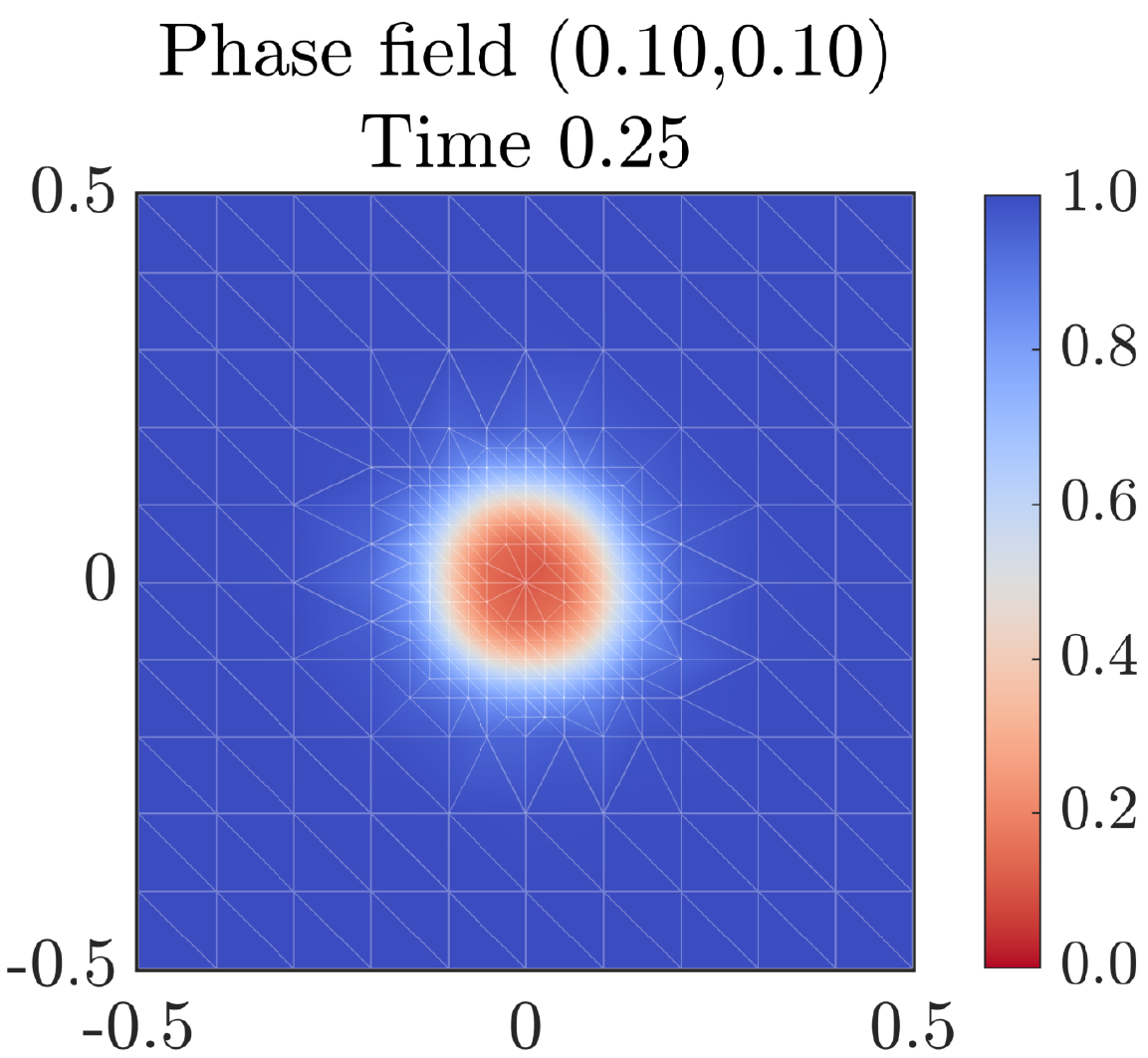}
	\includegraphics[width=0.32\textwidth]{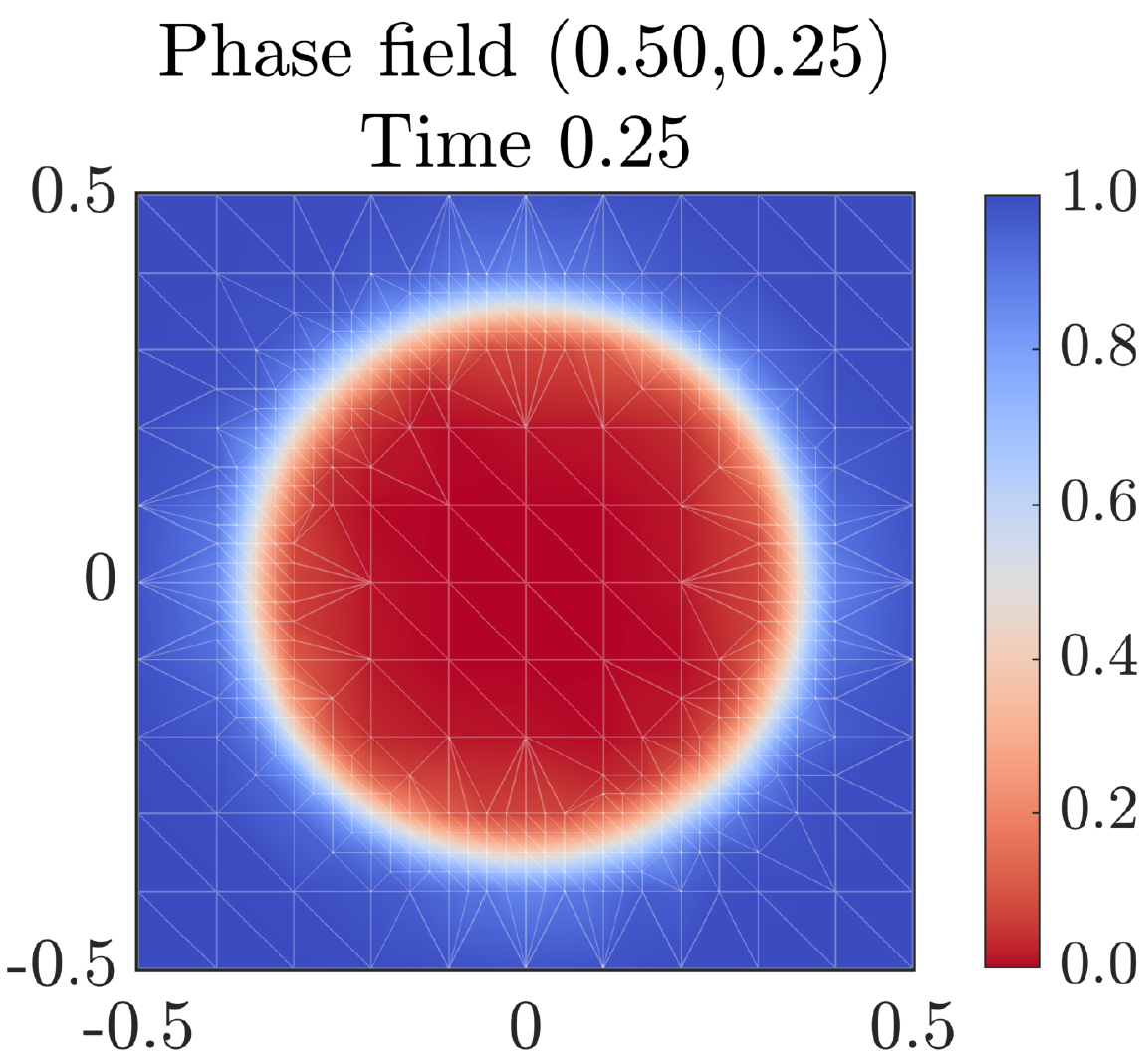}
	\includegraphics[width=0.32\textwidth]{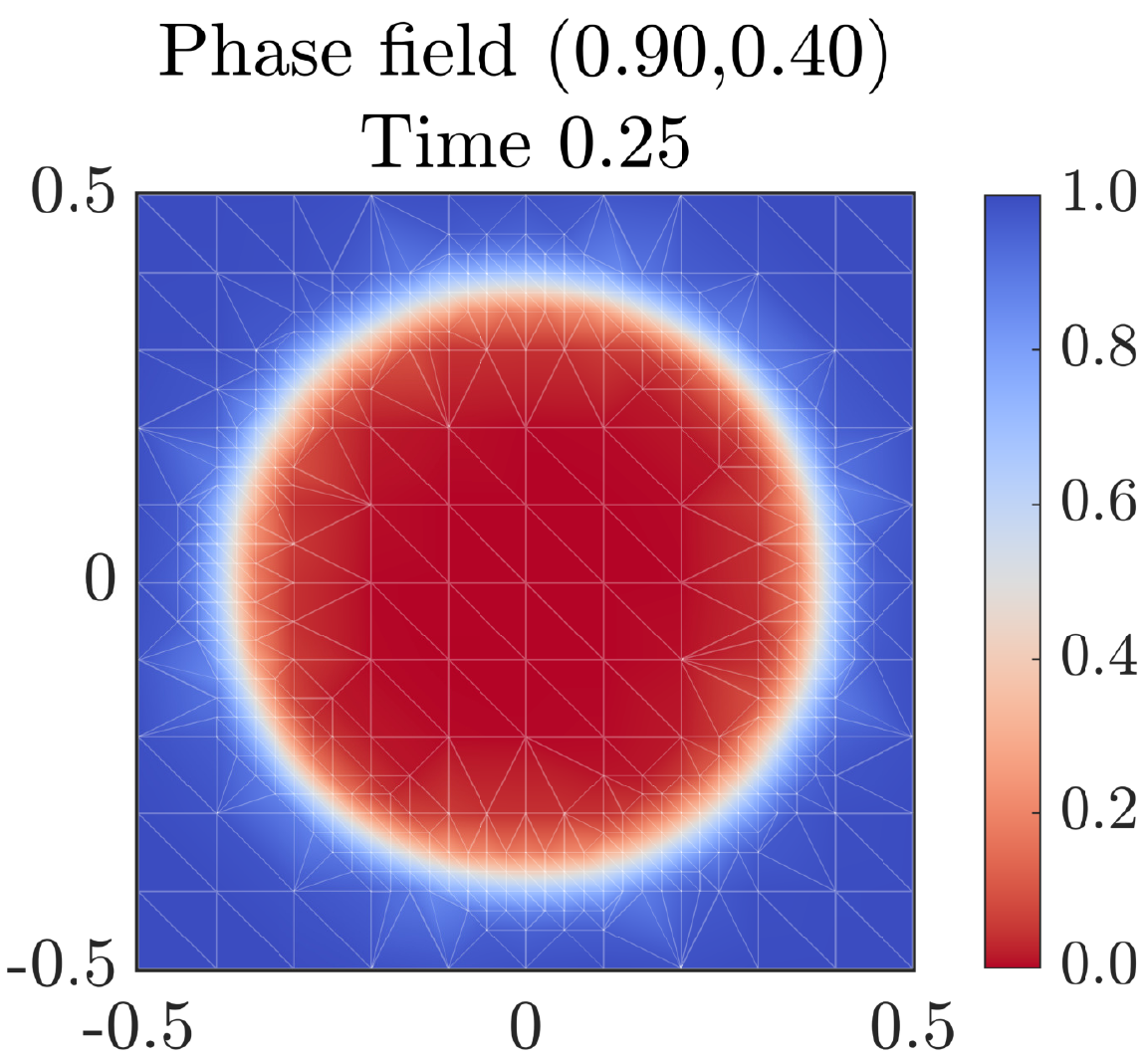}
	\captionof{figure}{The evolution of the phase fields corresponding the macro-scale locations $\xnn=(0.1, 0.1)$, $\xnn=(0.5, 0.25)$, $\xnn=(0.9, 0.4)$ (left to right) at two times $t^n = 0.05$ (top) and $t^n = 0.25$ (bottom).}
	\label{fig:microscale_Ex1_sol}
\end{figure}

The macro-scale solute concentration and porosity are displayed in \Cref{fig:macroscale_Ex1_sol}. The effective parameters are shown in \Cref{fig:effec_Ex1_sol}. We highlight that even if we are not computing the flow in this case, the effective permeability can still be calculated. Where the concentration decreases, it induces the dissolution of the mineral, which then increases the porosity, diffusivity and permeability until the micro-scale cells reach the maximum porosity $\bphi_M$.
\begin{figure}[htpb!]
	\centering
	\includegraphics[width=0.4\textwidth]{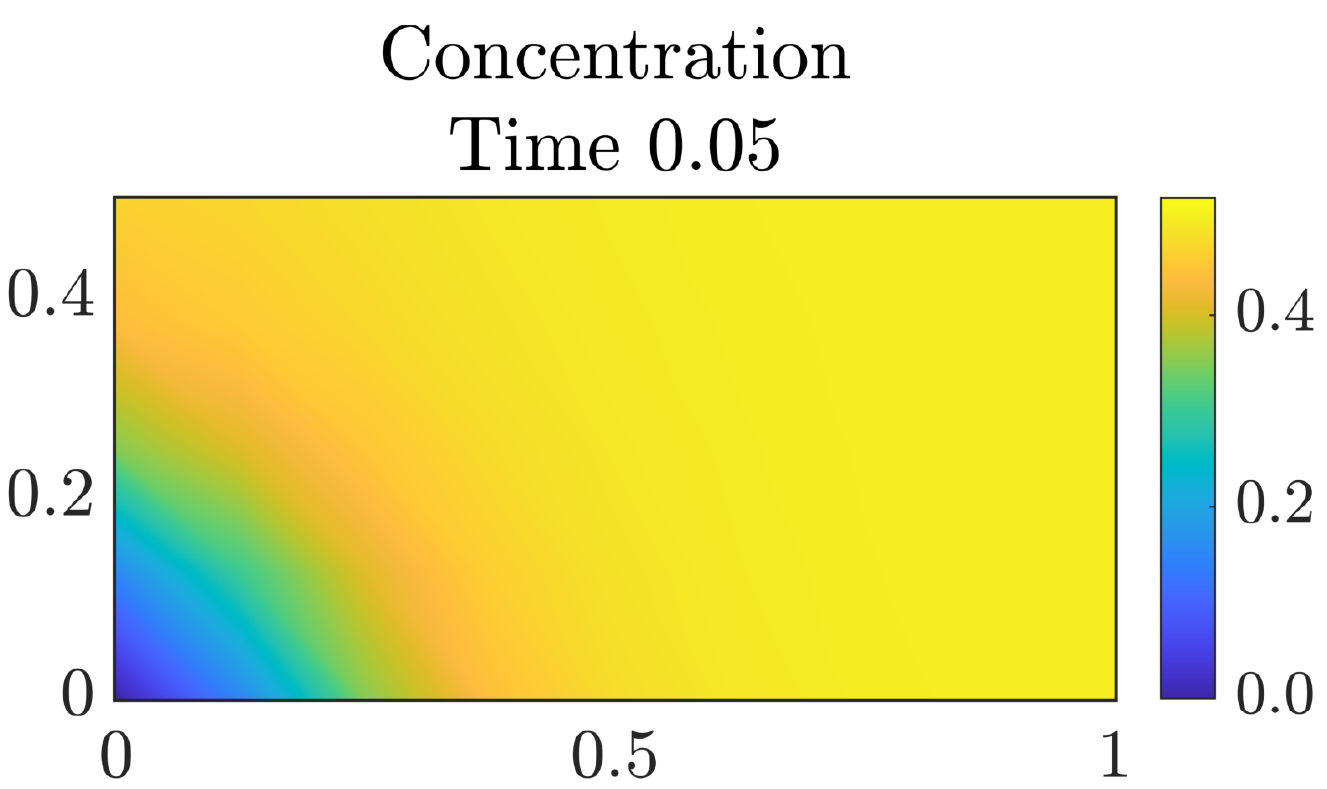}\hspace{0.5cm}
	\includegraphics[width=0.4\textwidth]{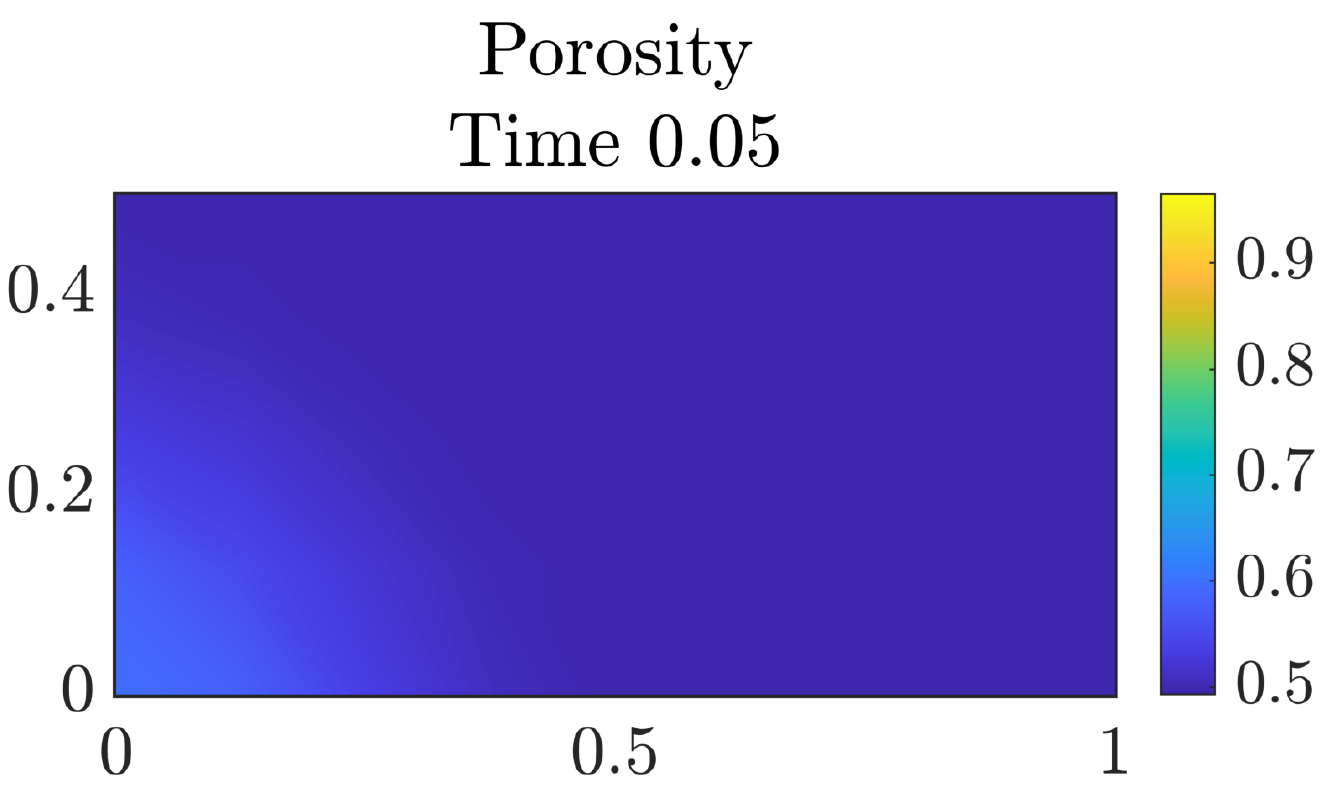}\\
	\includegraphics[width=0.4\textwidth]{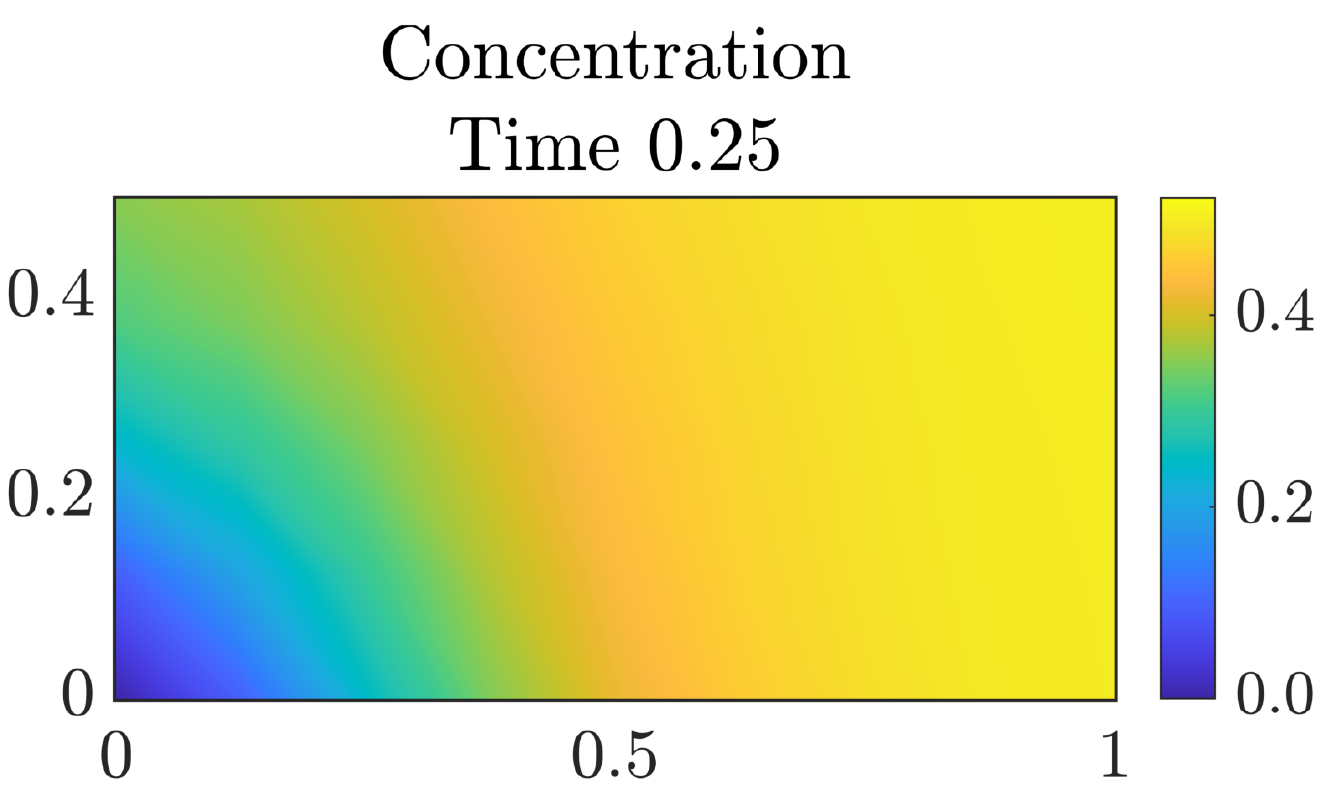}\hspace{0.5cm}
	\includegraphics[width=0.4\textwidth]{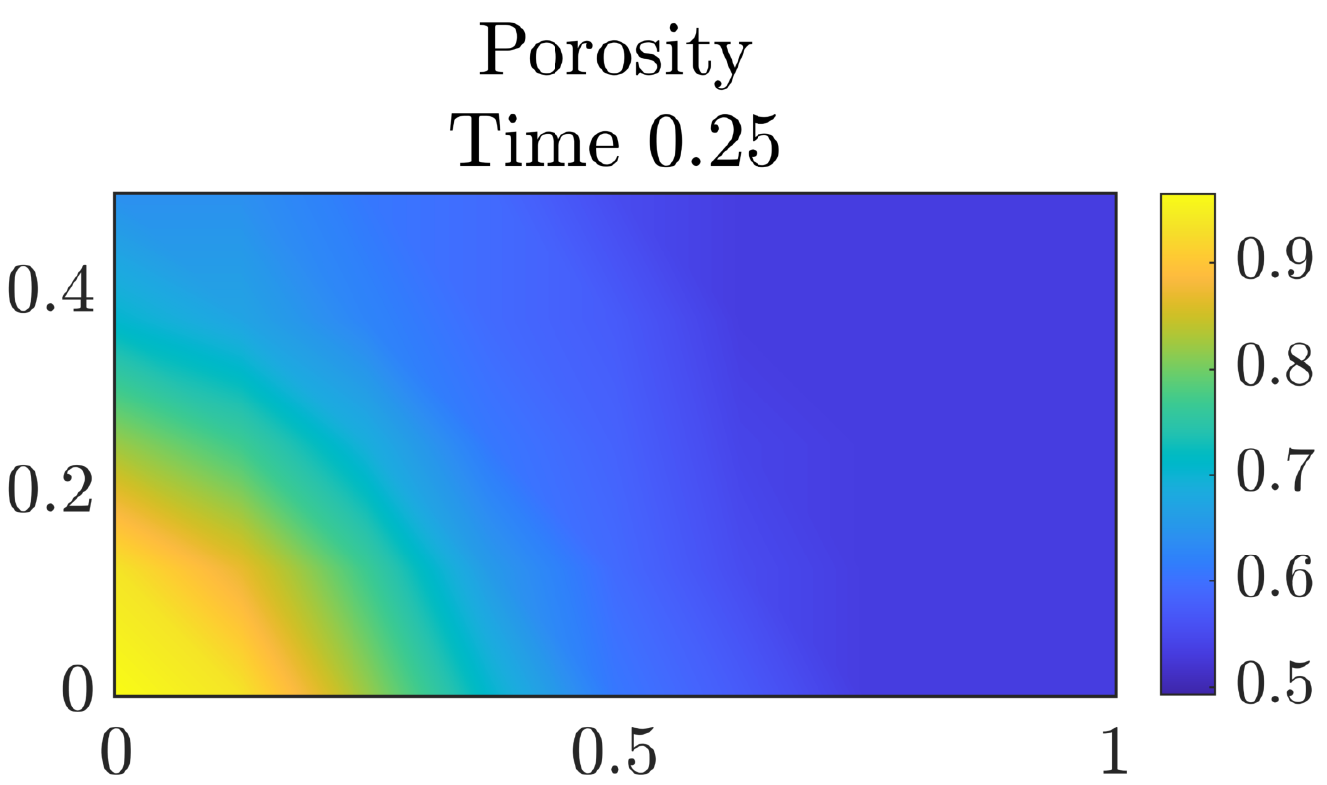}
	\captionof{figure}{The numerical solution of the concentration $u^n$ (left) and porosity $\bphi$ (right) at two times $t^n = 0.05$ (top) and $0.25$ (bottom).}
	\label{fig:macroscale_Ex1_sol}
\end{figure}
\begin{figure}[htpb!]
	\centering
	\includegraphics[width=0.4\textwidth]{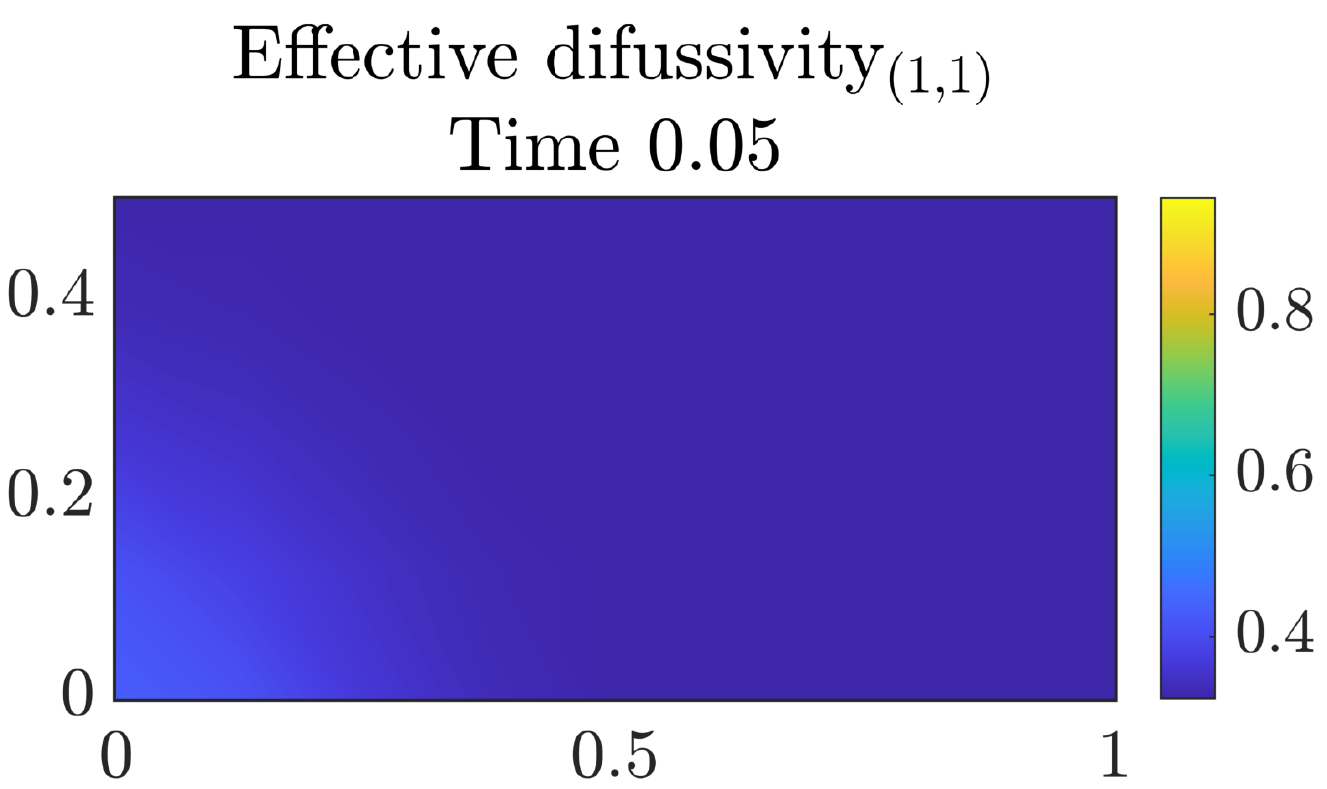}\hspace{0.5cm}
	\includegraphics[width=0.4\textwidth]{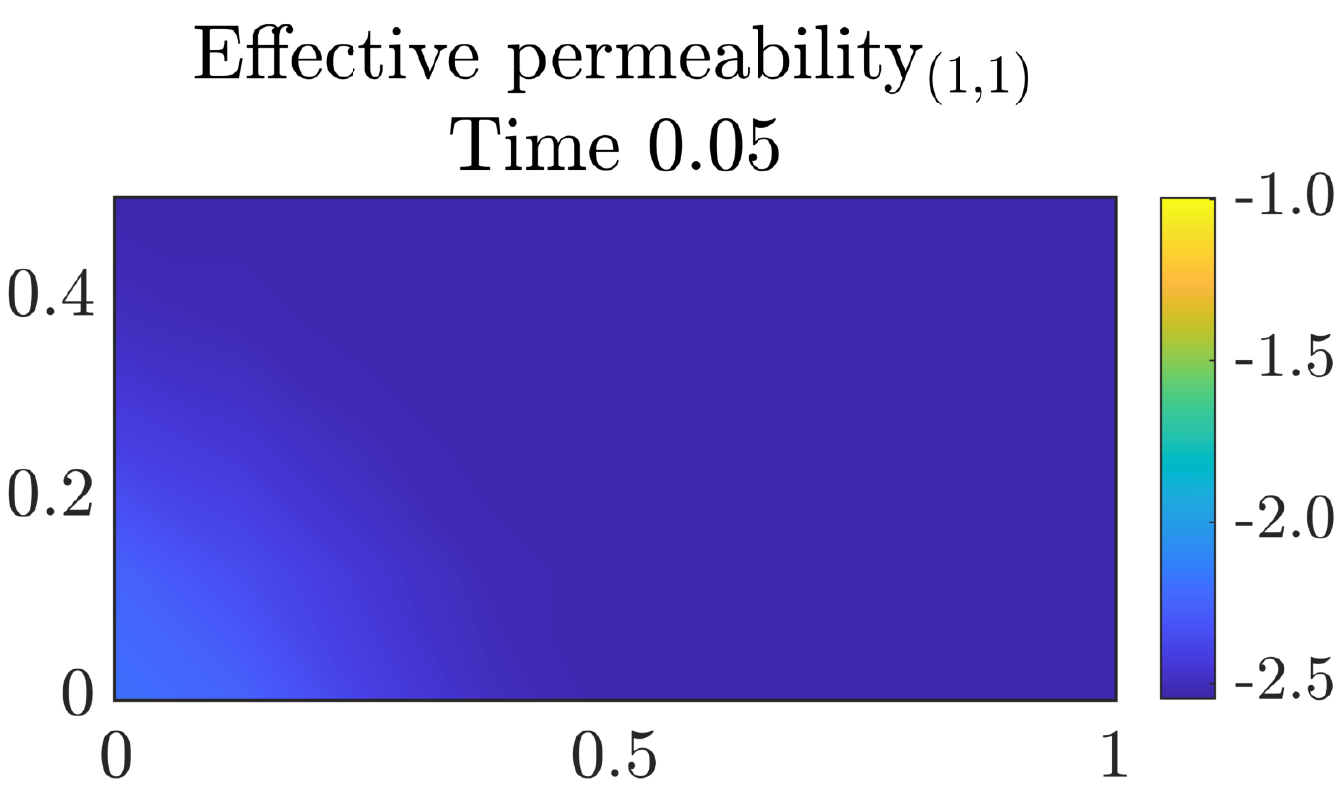}\\
	\includegraphics[width=0.4\textwidth]{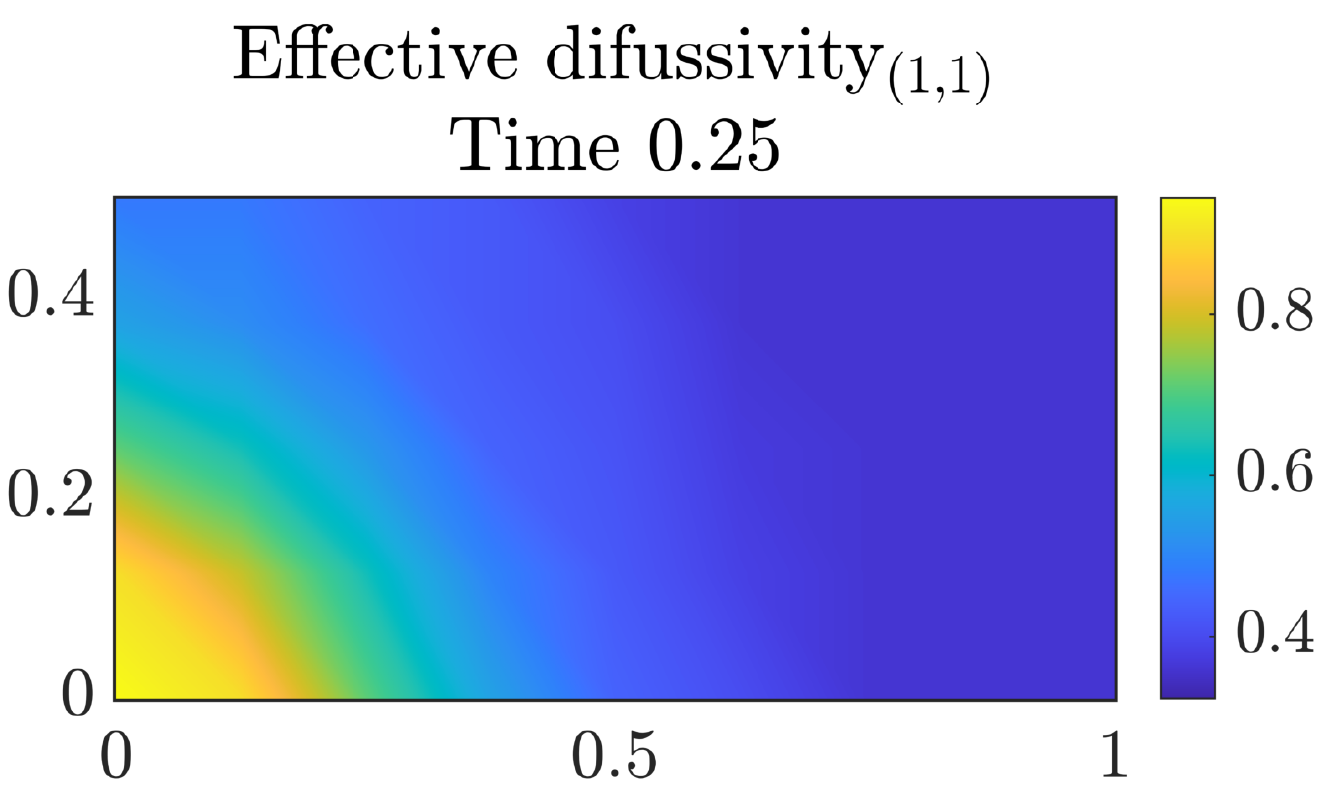}\hspace{0.5cm}
	\includegraphics[width=0.4\textwidth]{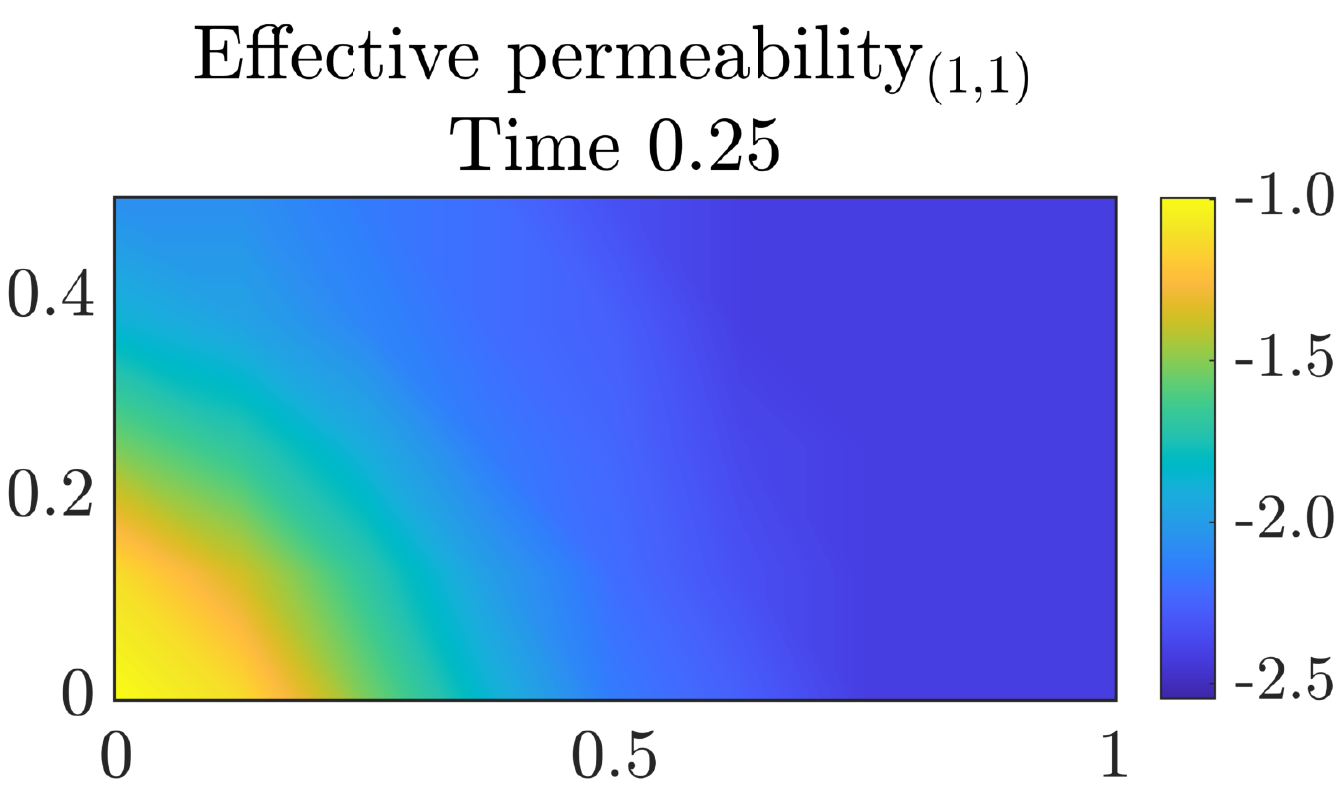}
	\captionof{figure}{The first components of the effective difussivity tensor (left) and the effective permeability tensor ($Log_{10}$) (right) at two times $t^n =0.05 $ (top) and $0.25$ (bottom).}
	\label{fig:effec_Ex1_sol}
\end{figure}

Due to the symmetry of the phase field at the micro scale, the expected results are isotropic effective tensors. The non-diagonal components of $\aef$ and $\kef$ are close to zero and can be neglected. Moreover, due to the similarity between $\kef_{1, 1}$ and $\kef_{2, 2}$ and between $\aef_{1, 1}$ and $\aef_{2, 2}$ we only show one of the components of the effective parameters in \Cref{fig:effec_Ex1_sol}.

In this test case the average number of degrees of freedom in both scales is $2.2$E+$5$ per time step. At the micro scale we have $64$ elements and for each active element we solve the phase-field problem and update the porosity and the effective parameters at each iteration. All the micro-scale problems have been solved in parallel.

Finally, in \Cref{fig:error_Ex1} we show the convergence of $\epsilon_M^{n, i}$ at different times. The linear convergence of the multi-scale iterative scheme is evident in \Cref{fig:error_Ex1}. We highlight that the total number of iterations in the multi-scale iterative scheme does not increase in time. By comparing \Cref{fig:error_Ex1} and \Cref{fig:Lstab} we evidence that the convergence of the multi-scale iterative scheme is not being affected by the macro-scale adaptivity. 
\begin{figure}[htpb!]
	\centering
	\includegraphics[width=0.45\textwidth]{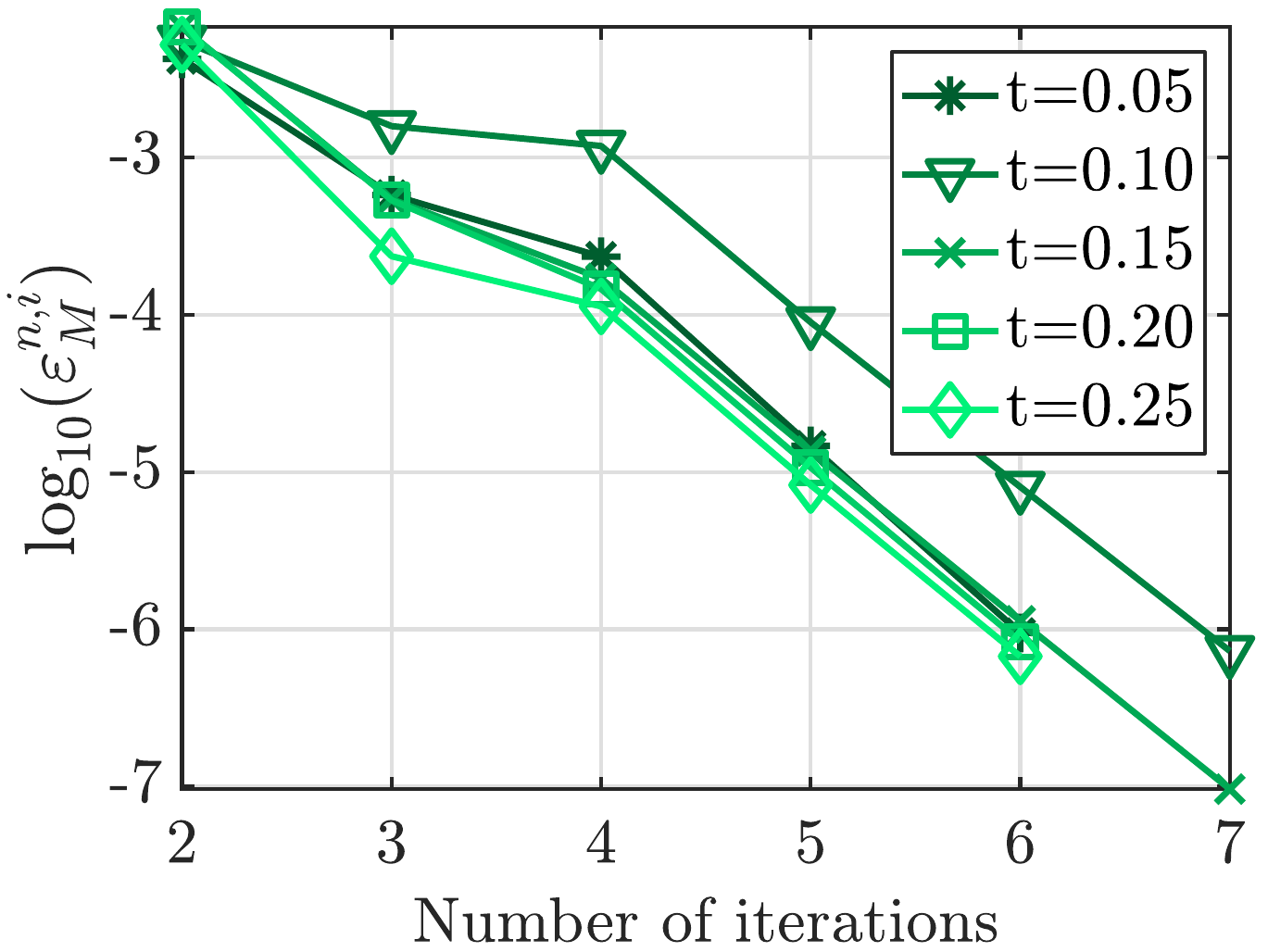}
	\captionof{figure}{The convergence of the multi-scale iterative scheme for five different times.}
	\label{fig:error_Ex1}
\end{figure}

\subsection{Test case 2. Anisotropic case}
\label{sec:ex2}

Consider the macro-scale domain $\Omega = \left( 0, 1\right) \times\left( 0, \frac{1}{2}\right) $ where the system is initially in equilibrium, i.e. the initial concentration is $u(\xnn, 0)=u_{\text{eq}}$ and $p(\xnn, 0)=0$ for all $\xnn\in \Omega$.
A dissolution process is triggered by imposing a Dirichlet condition for the concentration on the right boundary of $\Omega$, i.e., $u=0$. The Dirichlet condition for the pressure on the left boundary of $\Omega$ is $p=0.25$ and $p=0$ on the right boundary.
On the micro scale, we consider an initially inhomogeneous distribution of the mineral. We define two sub-domains of $\Omega$; the left half is $\Omega_l:= \left( 0, 0.5\right) \times \left( 0, 0.5\right) $ and the right half $\Omega_r:= \left( 0.5, 1\right) \times\left( 0, 0.5\right) $. The initial phase field is chosen to be
\begin{align*}
\phi_I(\xnn,\ynn) & = \begin{cases}
\phi_l^0(\ynn), & \text{ if } \xnn \in \Omega_l, \\
\phi_r^0(\ynn), & \text{ otherwise}, 
\end{cases} 
\end{align*}
where the micro-scale functions $\phi_l^0$ and $\phi_r^0$ are taken as follows
\begin{align*}
\phi_l^0(\ynn) & = \begin{cases}
0, & \text{if } \ynn\in[-0.4, 0.4]\times[-0.3, 0.3], \\
1, & \text{otherwise}, 
\end{cases} \\ \phi_r^0(\ynn) & = \begin{cases}
0, & \text{if } \ynn\in[-0.3, 0.3]\times[-0.4, 0.4], \\
1, & \text{otherwise}.
\end{cases}
\end{align*}

The configuration of the test case 2 is displayed in \Cref{geoex2}. With this example we show the potential of the model and the numerical strategy in a heterogeneous scenario. Here we add the flow that was dismissed during the proofs in \Cref{sec:Analysis}. The following parameters have been used in the simulation
\[\lstab=1\text{E-}4; \quad \theta_r=2; \quad C_r=0; \quad \bphi_M = 0.9686.\]
For the simulation time we take $\ttime=0.25$ and the time step is chosen to be $\dt=0.01$ as explained before.

\begin{figure}[htpb!]
	\centering
	\includegraphics[width=0.5\textwidth]{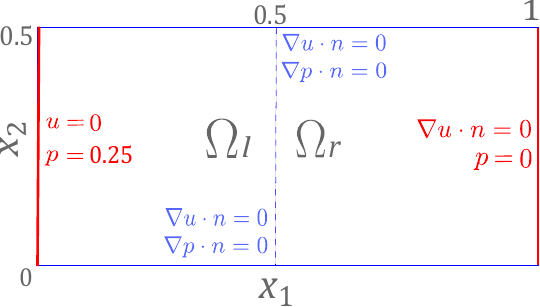}\\
	\includegraphics[width=0.35\textwidth]{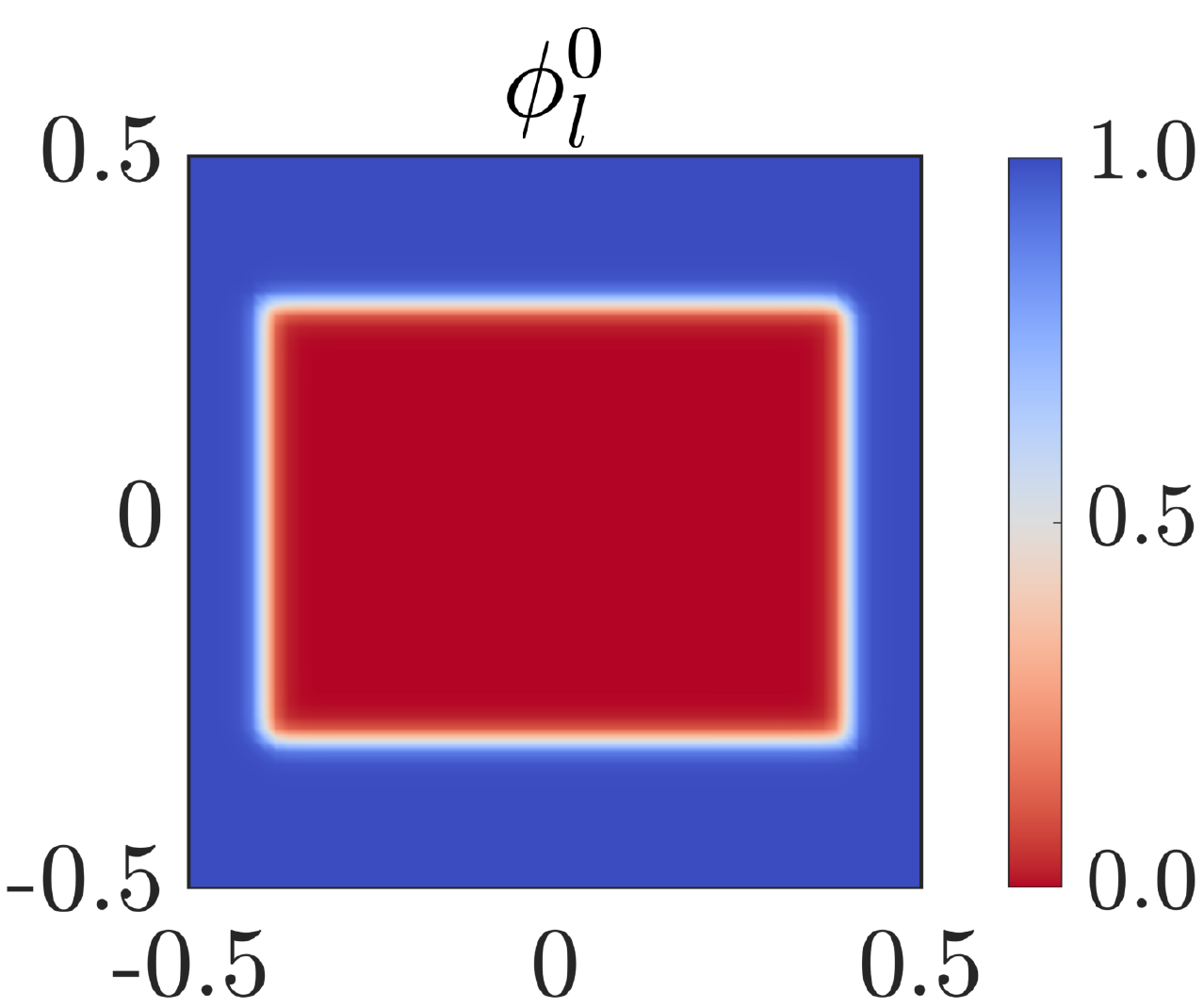}\hspace{0.5cm}
	\includegraphics[width=0.35\textwidth]{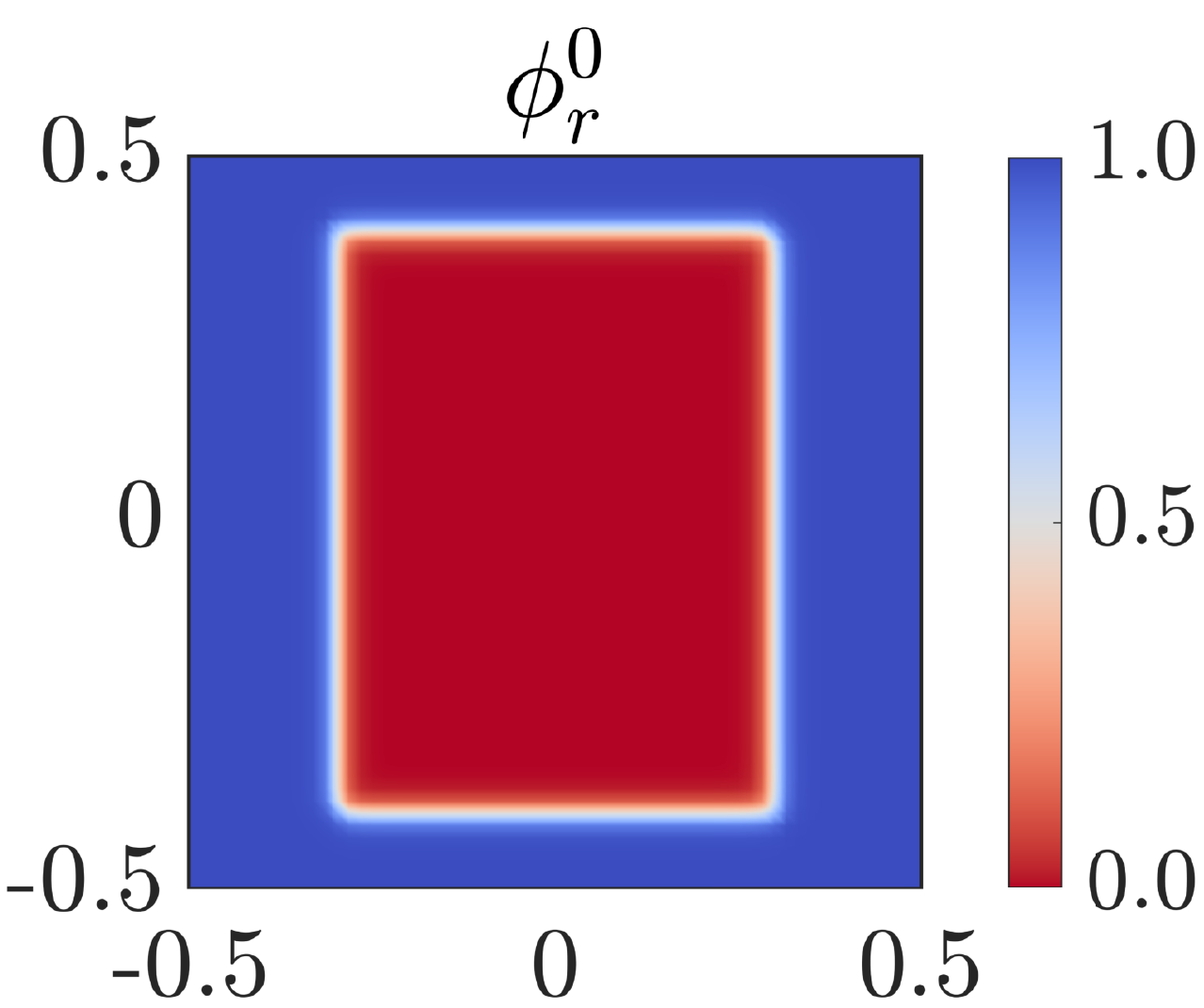}
	\captionof{figure}{The configuration of the macro scale (top) and the phase-field initial conditions (bottom) - Test case 2.}\label{geoex2}
\end{figure}

Due to the structure of this example and the chosen boundary and initial conditions, the macro-scale solution does not depend on the vertical component. Therefore the 1D projection of the macro-scale solutions in the horizontal direction is sufficient to understand the behavior of the whole system. The macro-scale adaptive strategy is unnecessary as the natural choice is to fix the nodes located at the lowest part of the macro-scale domain to be active.

In Figures \ref{fig:microscale_Ex2} and \ref{fig:microscale2_Ex2} we show the evolution of the phase field corresponding to different macro-scale locations. On each micro-scale domain $Y$ we use an initial uniform mesh with $800$ elements and the minimum diameter $h_{T_\mu}$ in the refined mesh is $h_{T_\mu}= 0.025$. Moreover, for the micro-scale non-linear solver we choose  $\llin= \max\left(|2\lambda f(u) + 8\gamma|,|2\lambda f(u) - 8\gamma|\right)$ and $\textit{tol}_\mu=1$E$-8$. 

\begin{figure}[htpb!]
	\centering
	\includegraphics[width=0.32\textwidth]{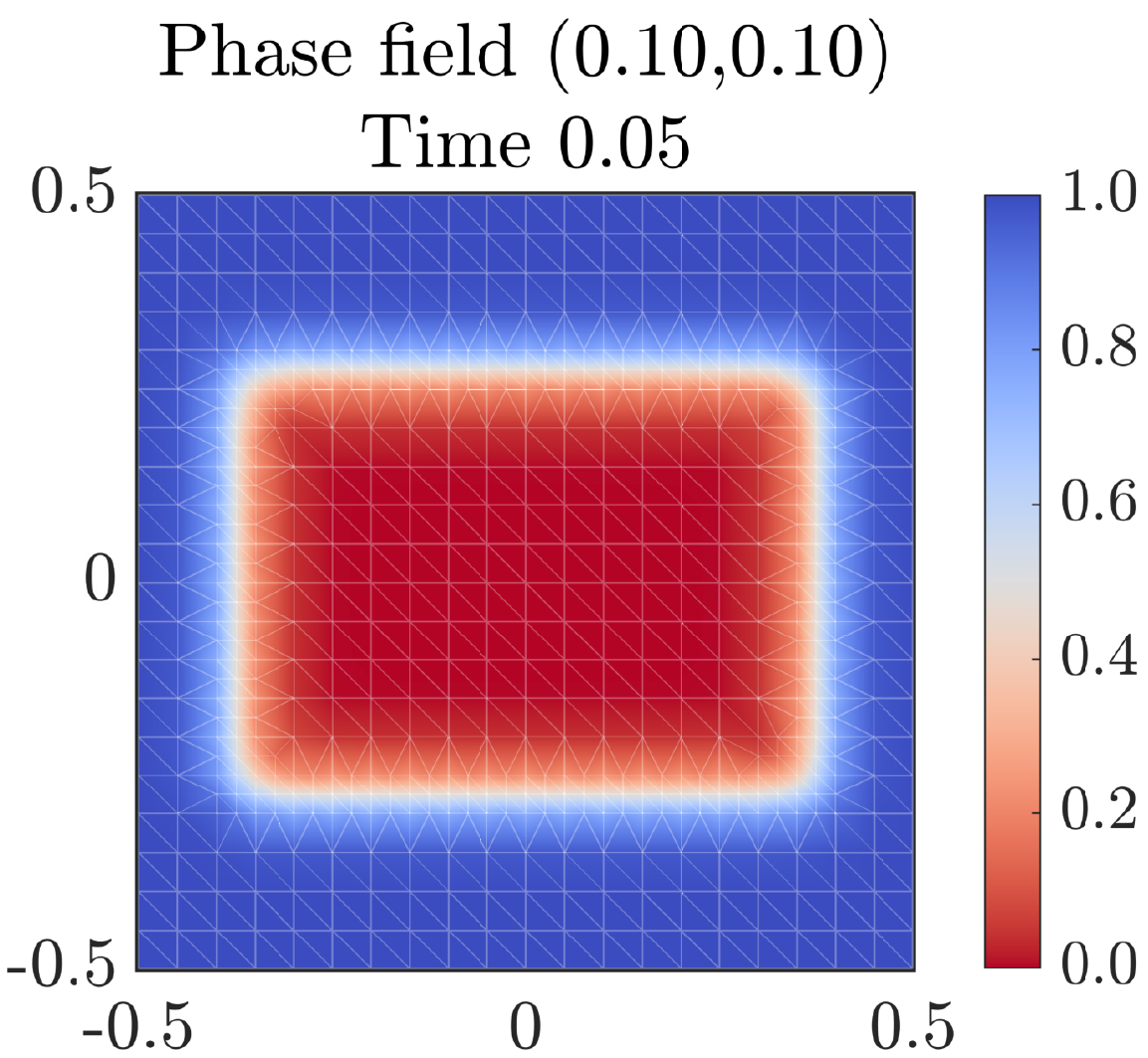}
	\includegraphics[width=0.32\textwidth]{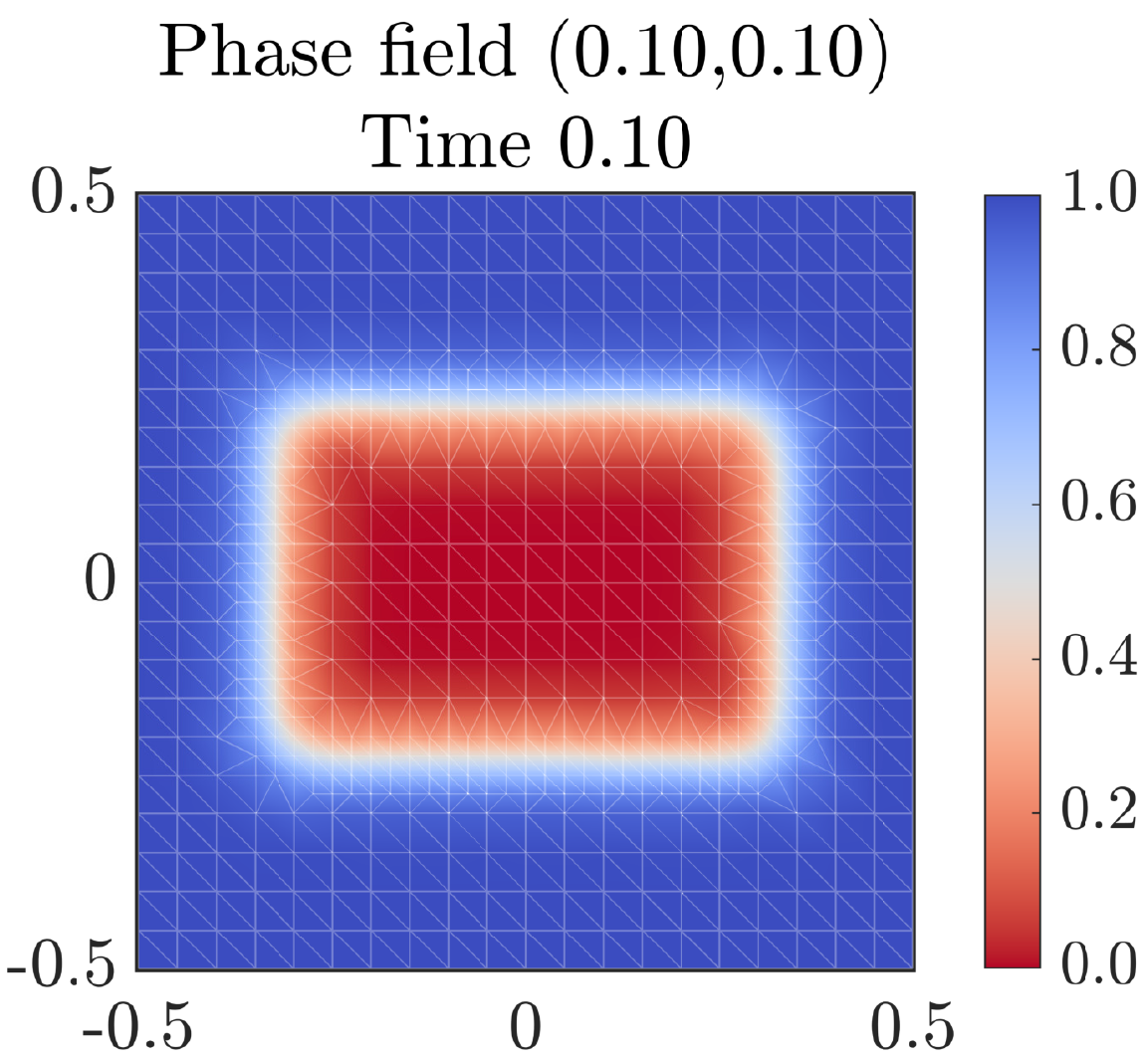}
	\includegraphics[width=0.32\textwidth]{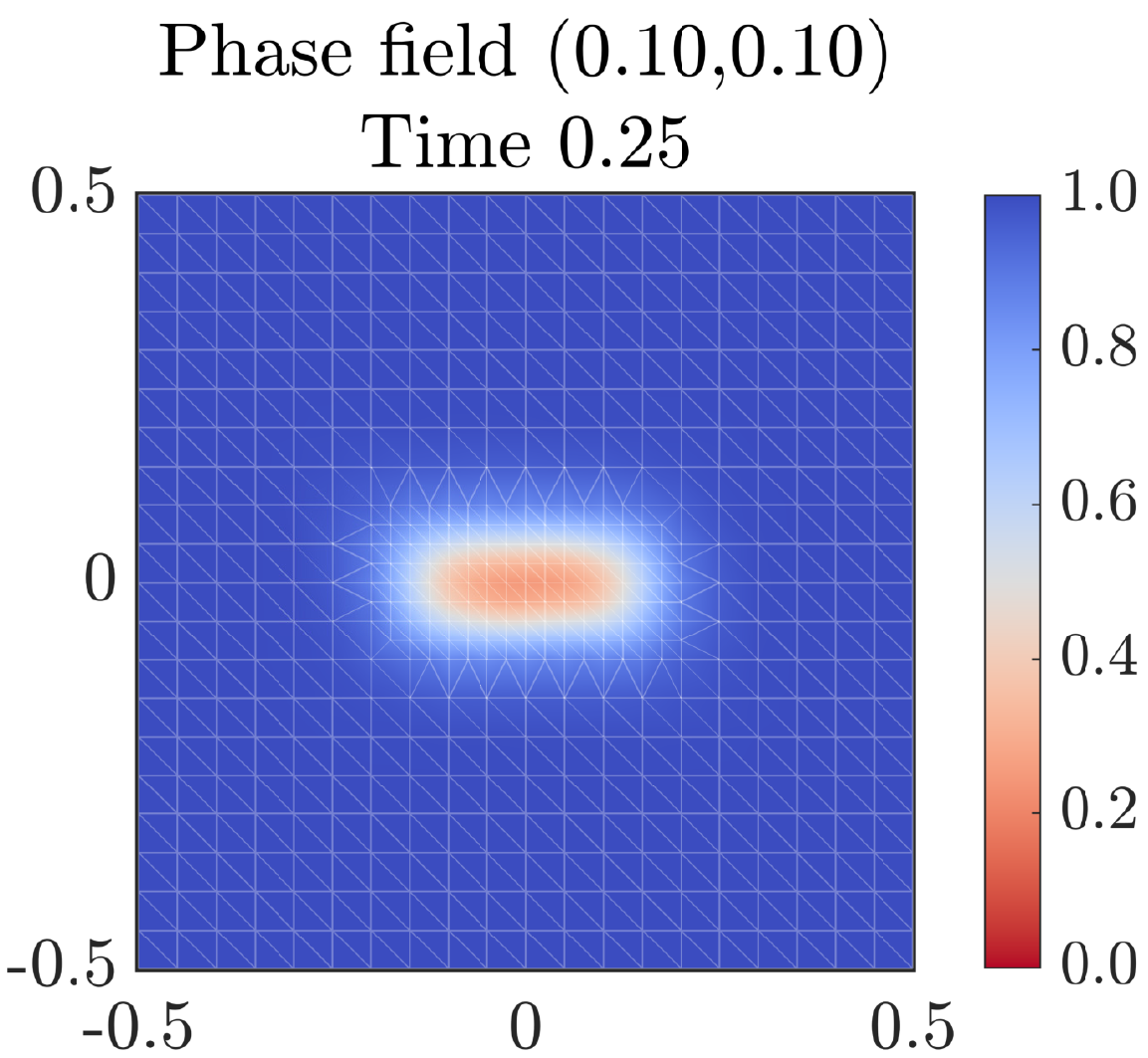}\\
	\includegraphics[width=0.32\textwidth]{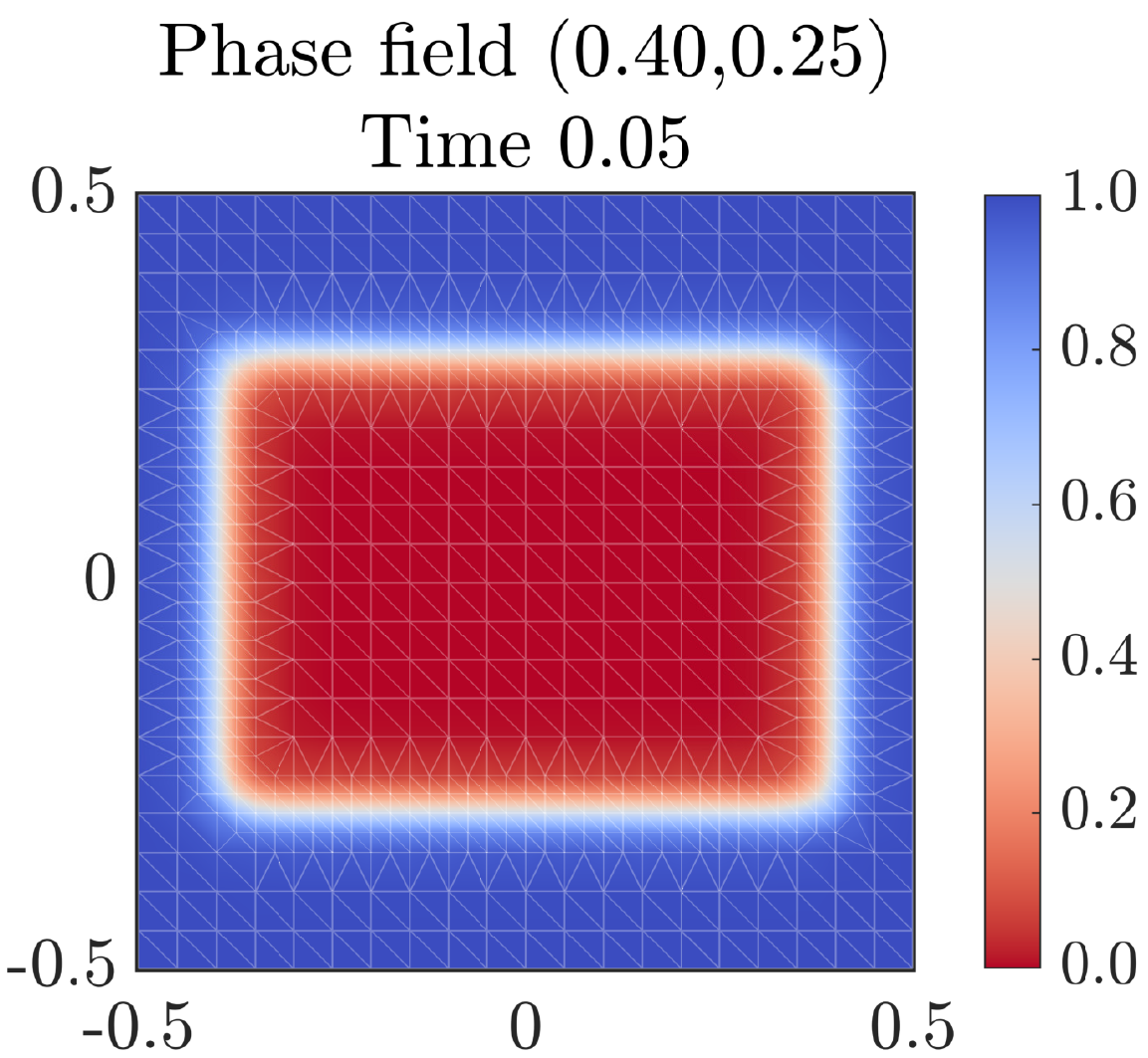}
	\includegraphics[width=0.32\textwidth]{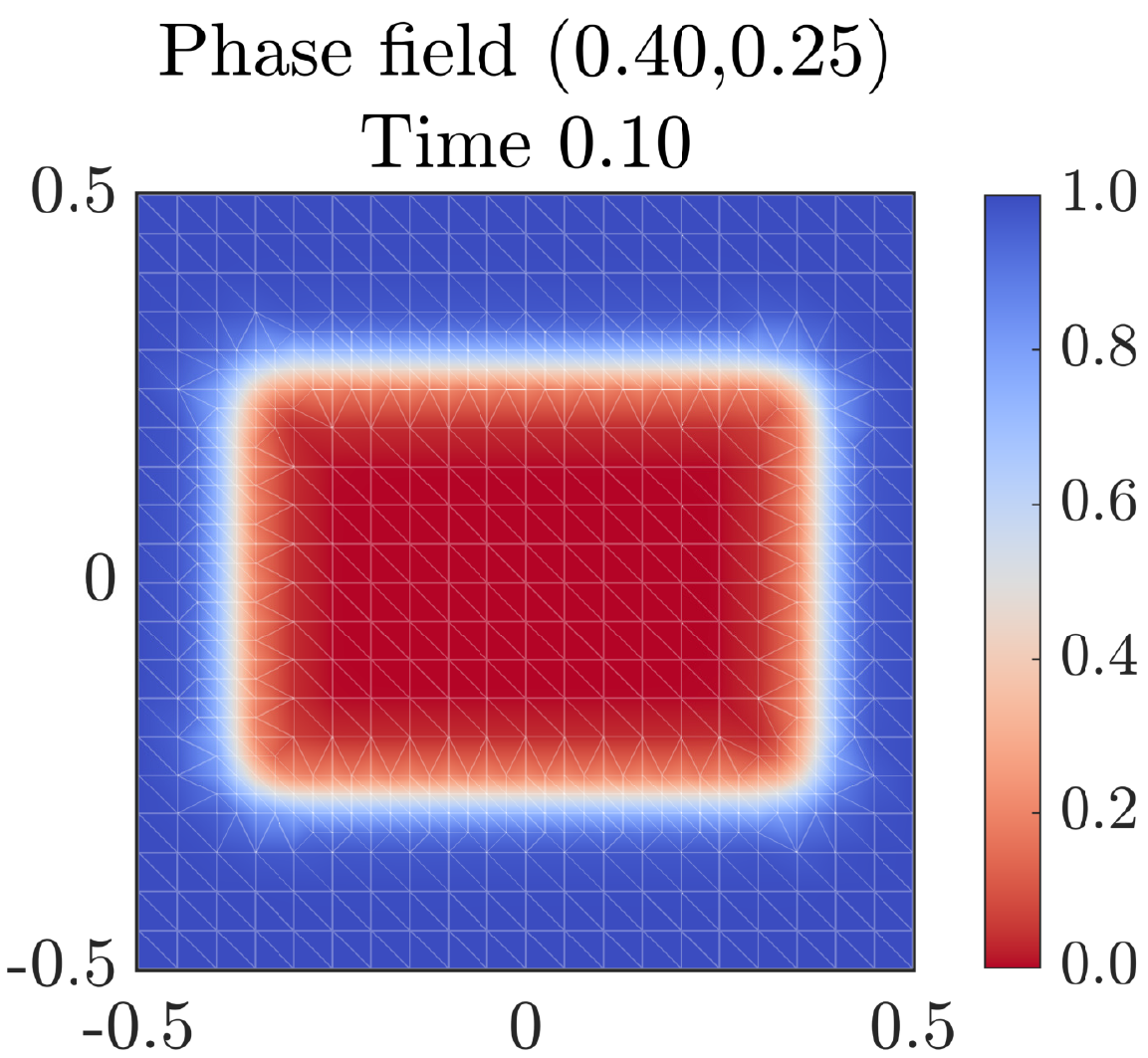}
	\includegraphics[width=0.32\textwidth]{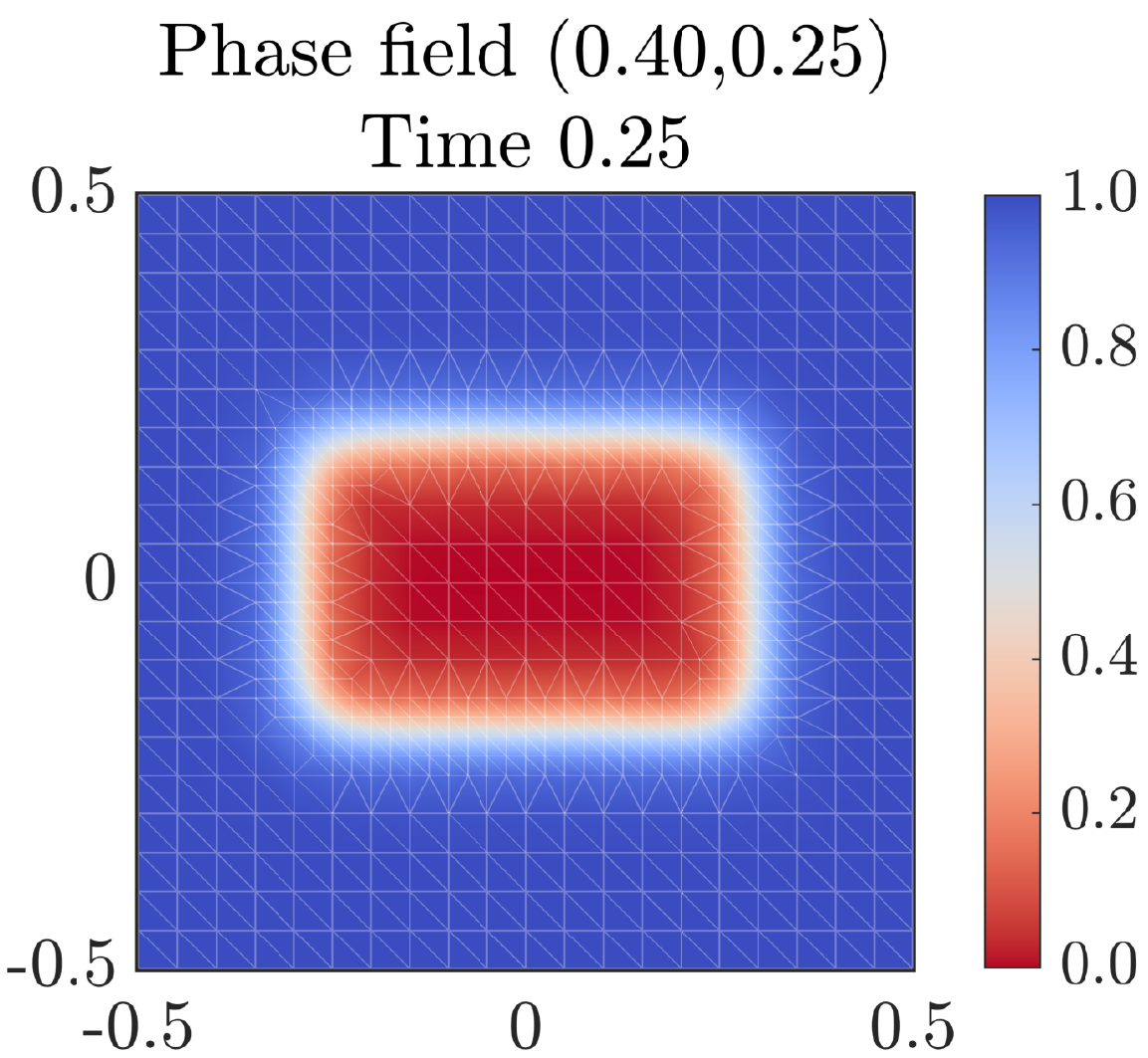}
	\captionof{figure}{The evolution of the phase fields $\phi_l$ corresponding to the macro-scale locations $\xnn=(0.1, 0.1)$ (top) and $\xnn=(0.4, 0.25)$ (bottom) at three times $t^n = 0.05, \, 0.10 $ and $0.25$ (left to right).}
	\label{fig:microscale_Ex2}
\end{figure}
\begin{figure}[htpb!]
	\centering
	\includegraphics[width=0.32\textwidth]{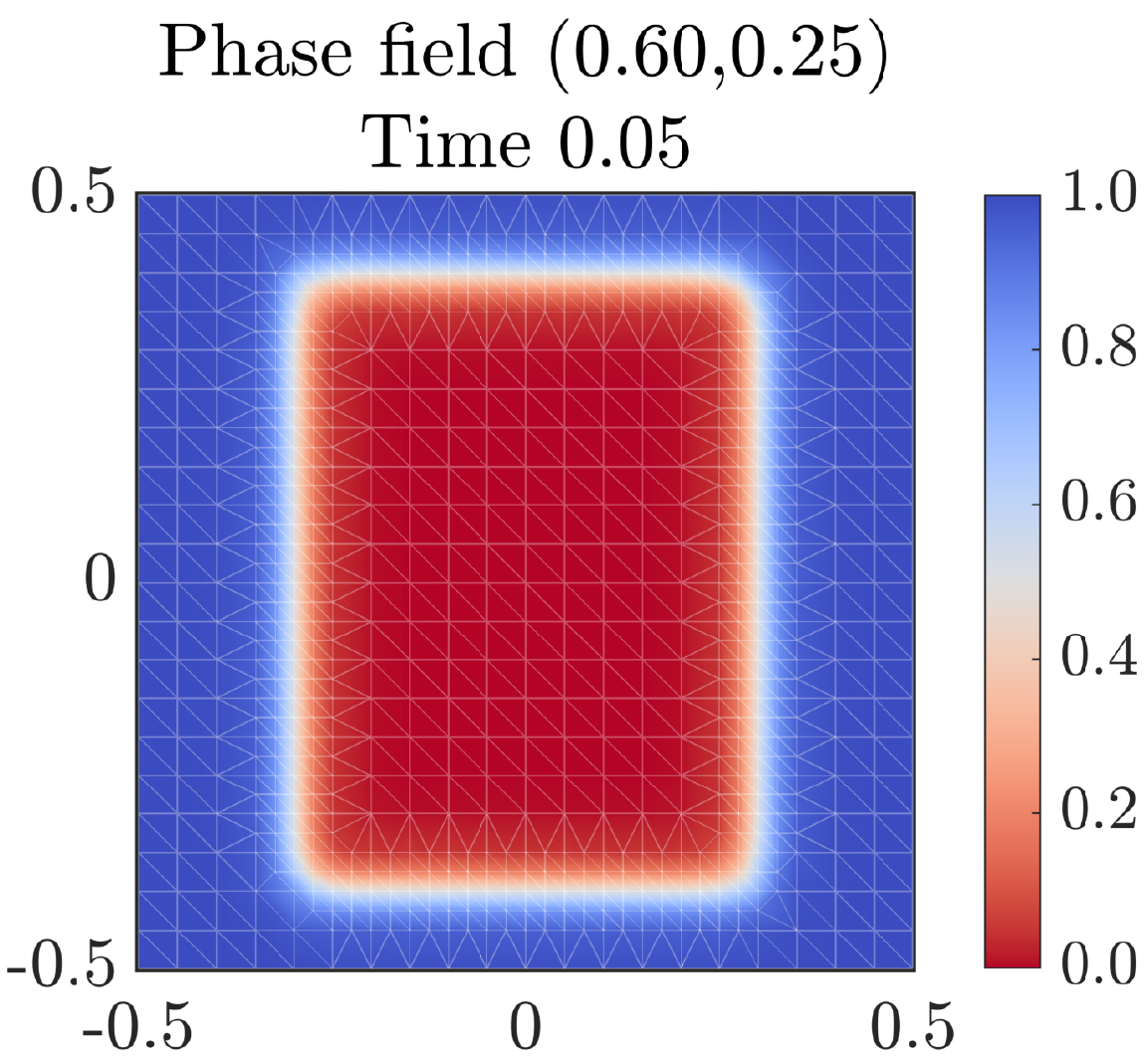}
	\includegraphics[width=0.32\textwidth]{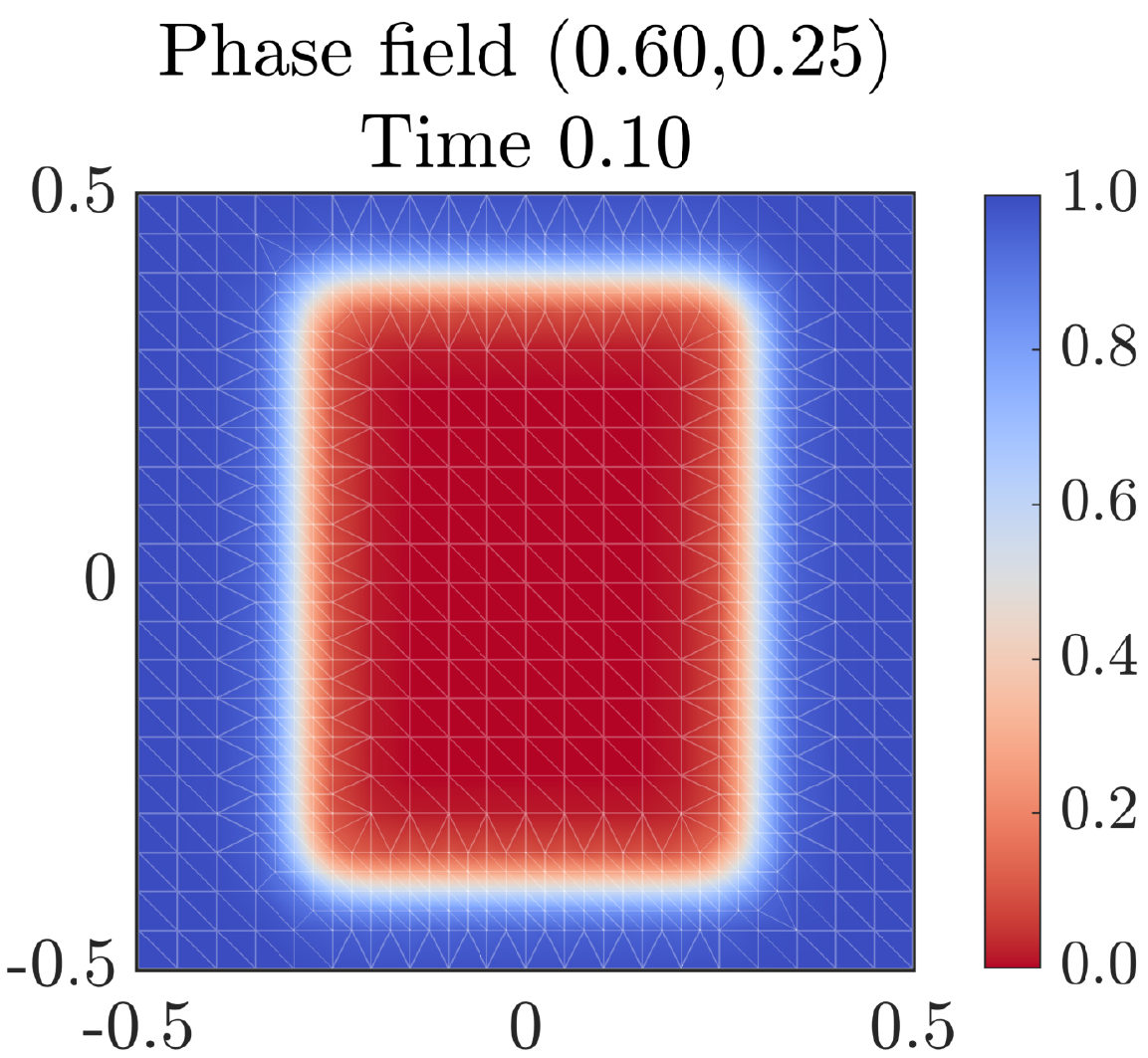}
	\includegraphics[width=0.32\textwidth]{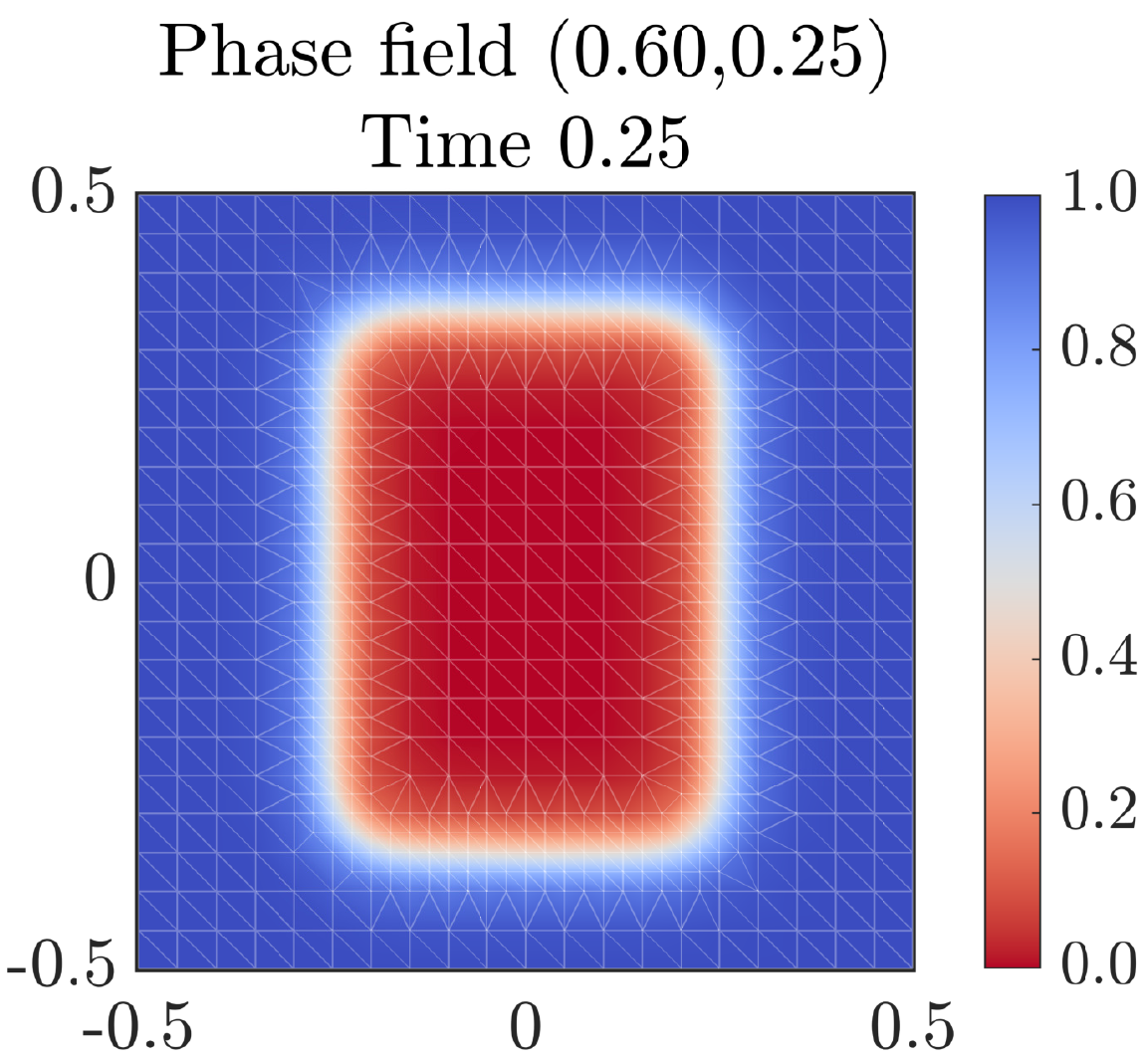}\\
	\includegraphics[width=0.32\textwidth]{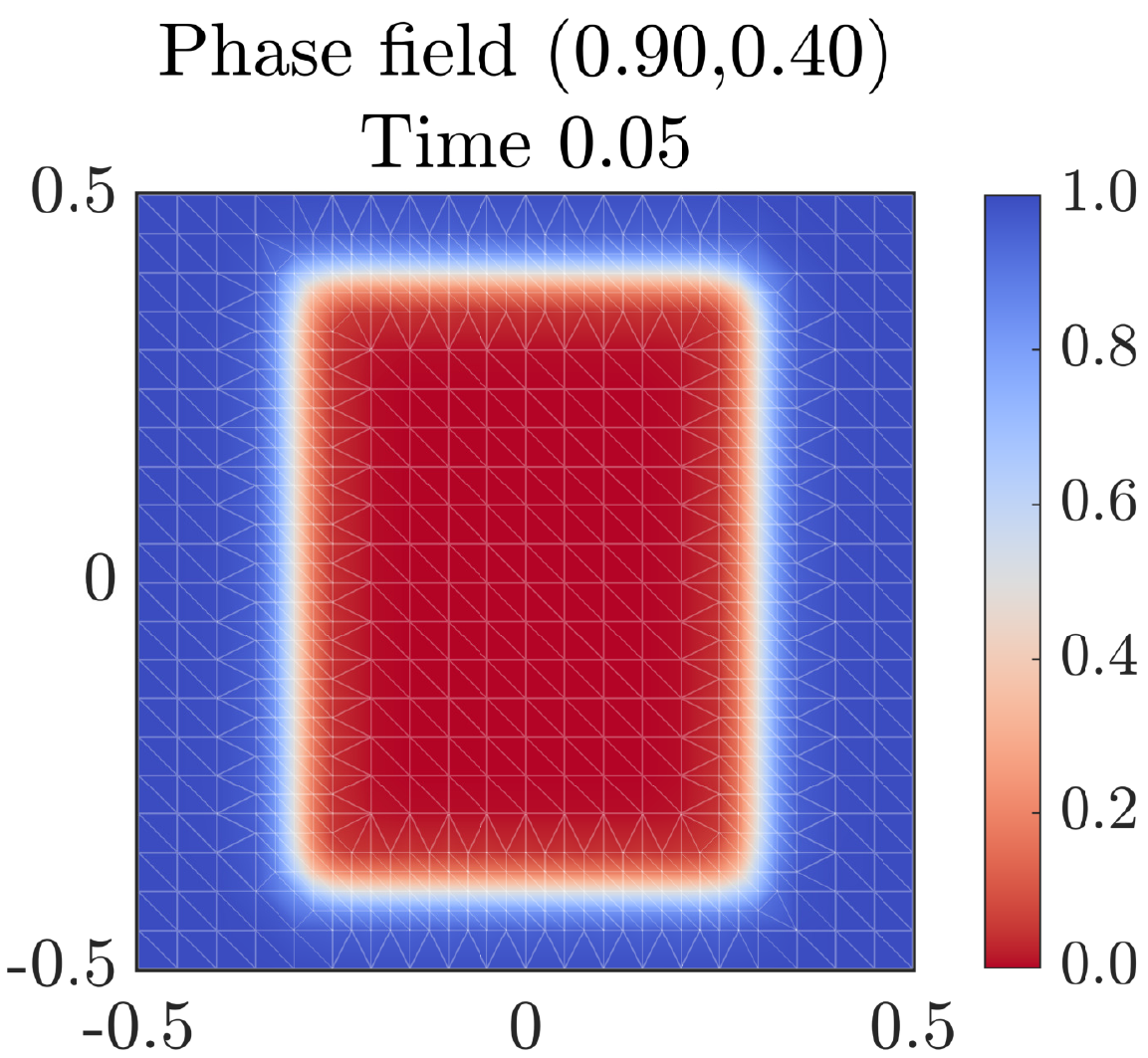}
	\includegraphics[width=0.32\textwidth]{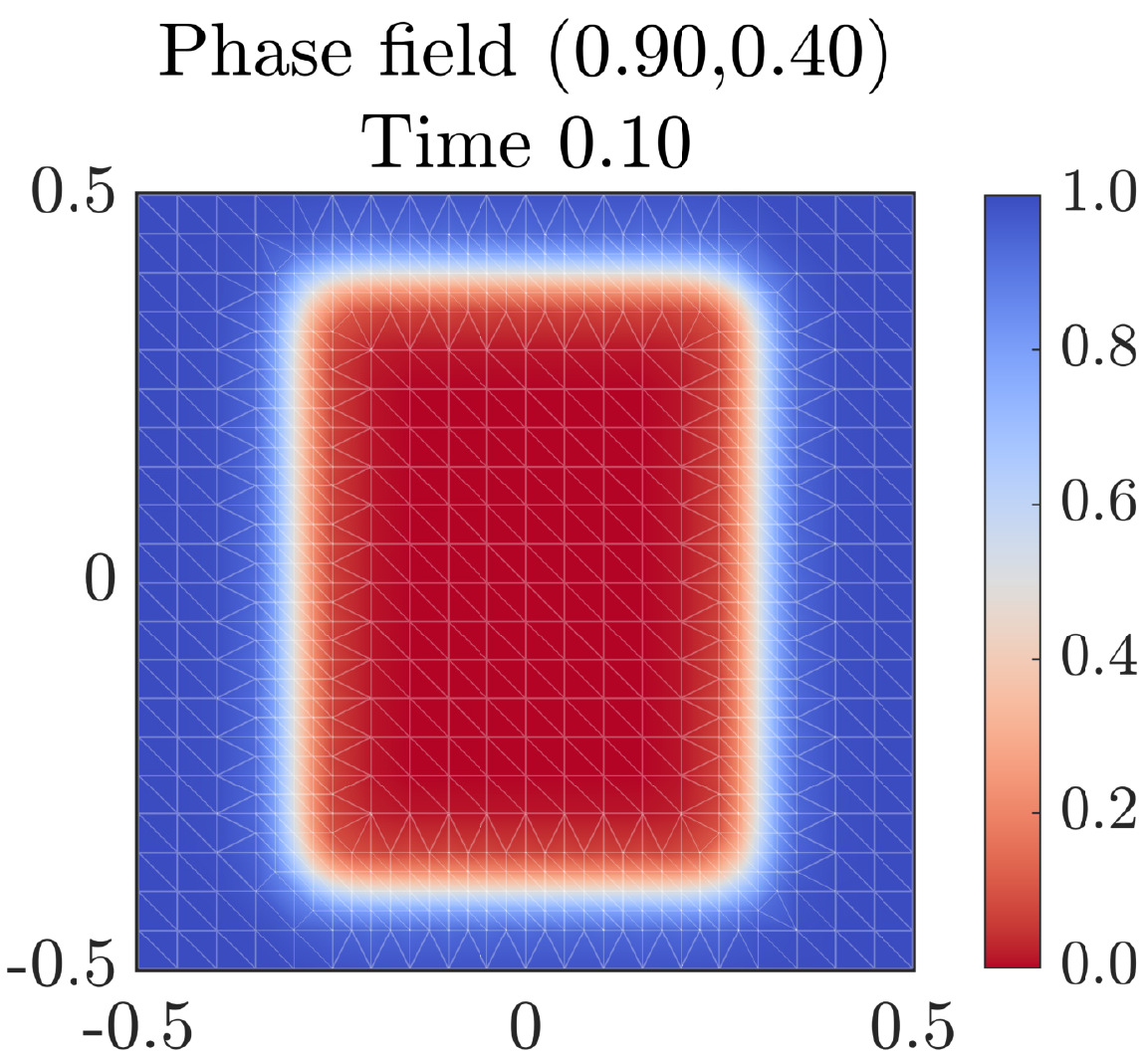}
	\includegraphics[width=0.32\textwidth]{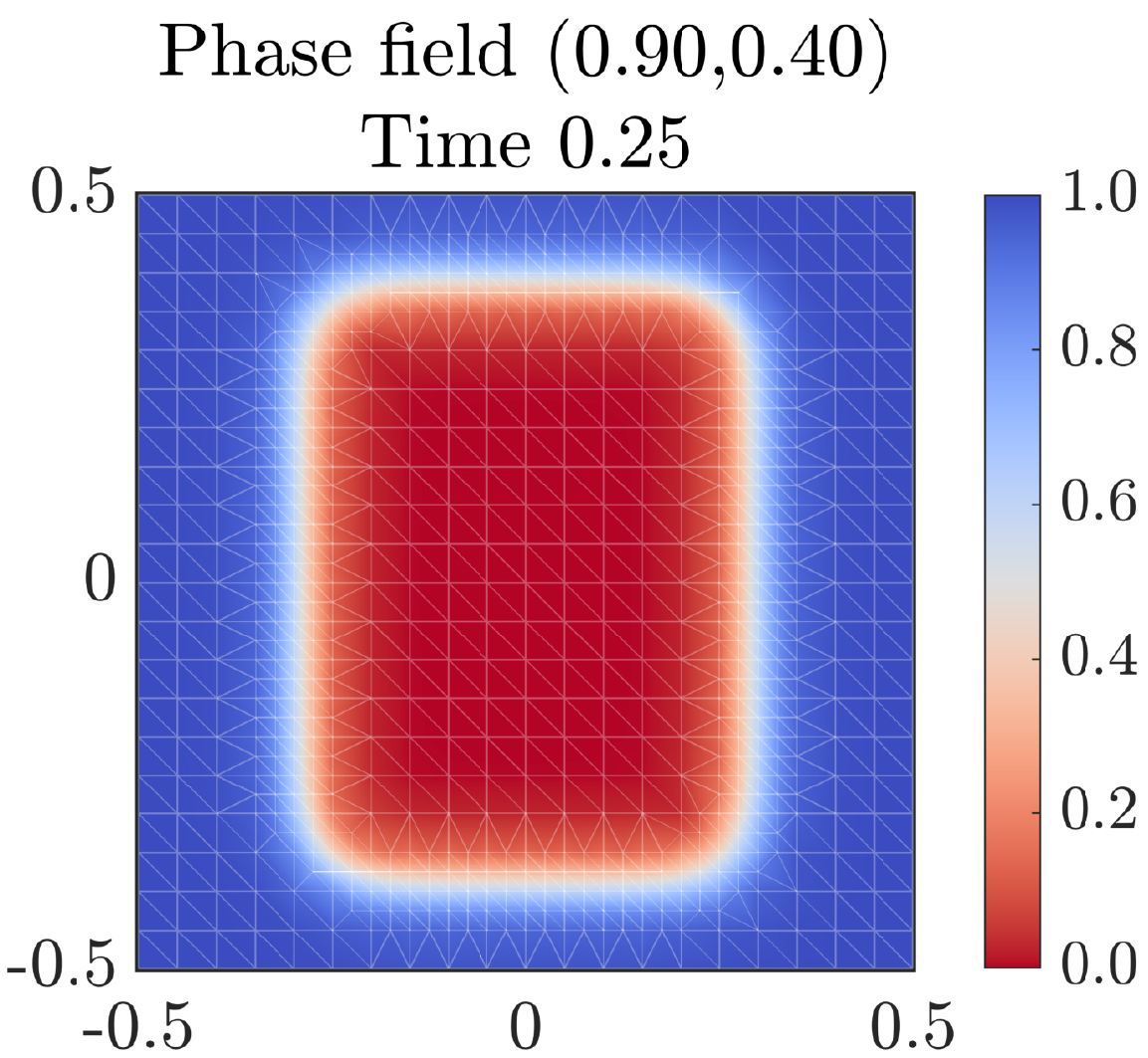}
	\captionof{figure}{The evolution of the phase fields $\phi_r$ corresponding to the macro-scale locations $\xnn=(0.6, 0.25)$ (top) and $\xnn=(0.9, 0.4)$ (bottom) at three times $t^n = 0.05, \, 0.10 $ and $0.25$ (left to right).}
	\label{fig:microscale2_Ex2}
\end{figure}

The 1D projection of the macro-scale solute concentration, pressure and porosity is displayed in \Cref{fig:sol1D_Ex2}. As expected, where the concentration decreases, the dissolution of the mineral is induced, which then increases the porosity. This effect is also evident in \Cref{fig:effec1D2_Ex2}, where the 1D projection of the effective parameters is displayed.

\begin{figure}[htpb!]
	\centering
	\includegraphics[width=0.32\textwidth]{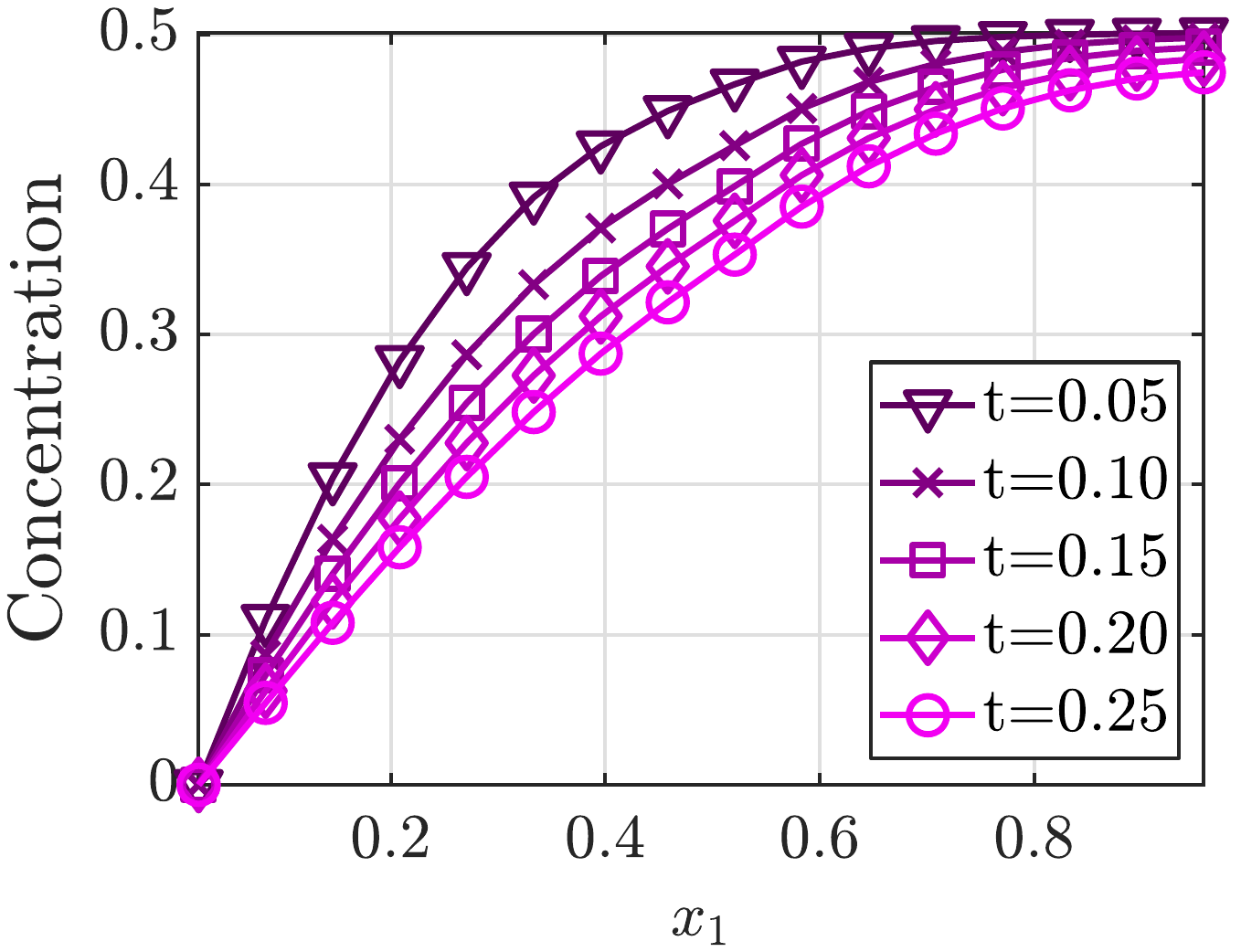}
	\includegraphics[width=0.32\textwidth]{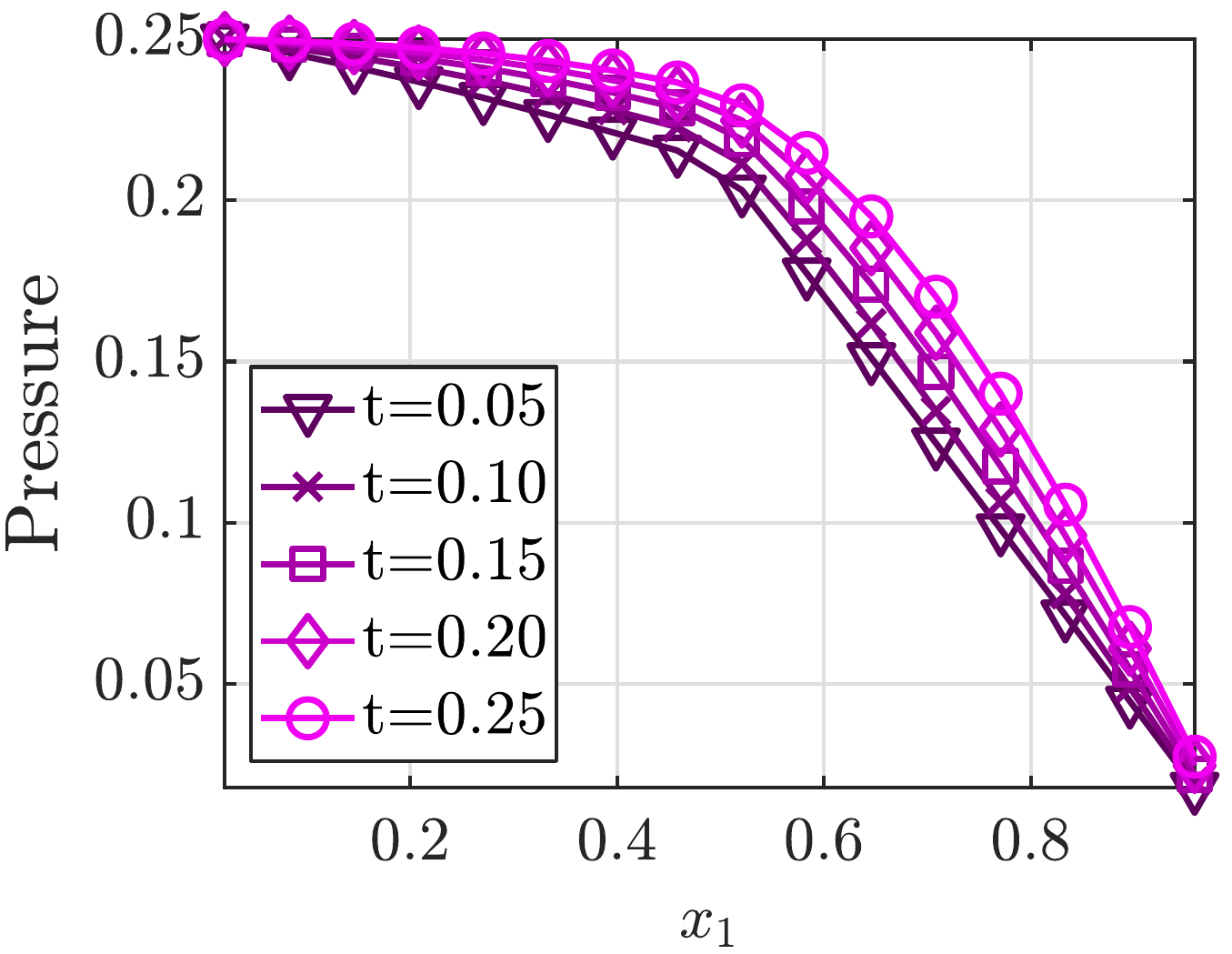}
	\includegraphics[width=0.32\textwidth]{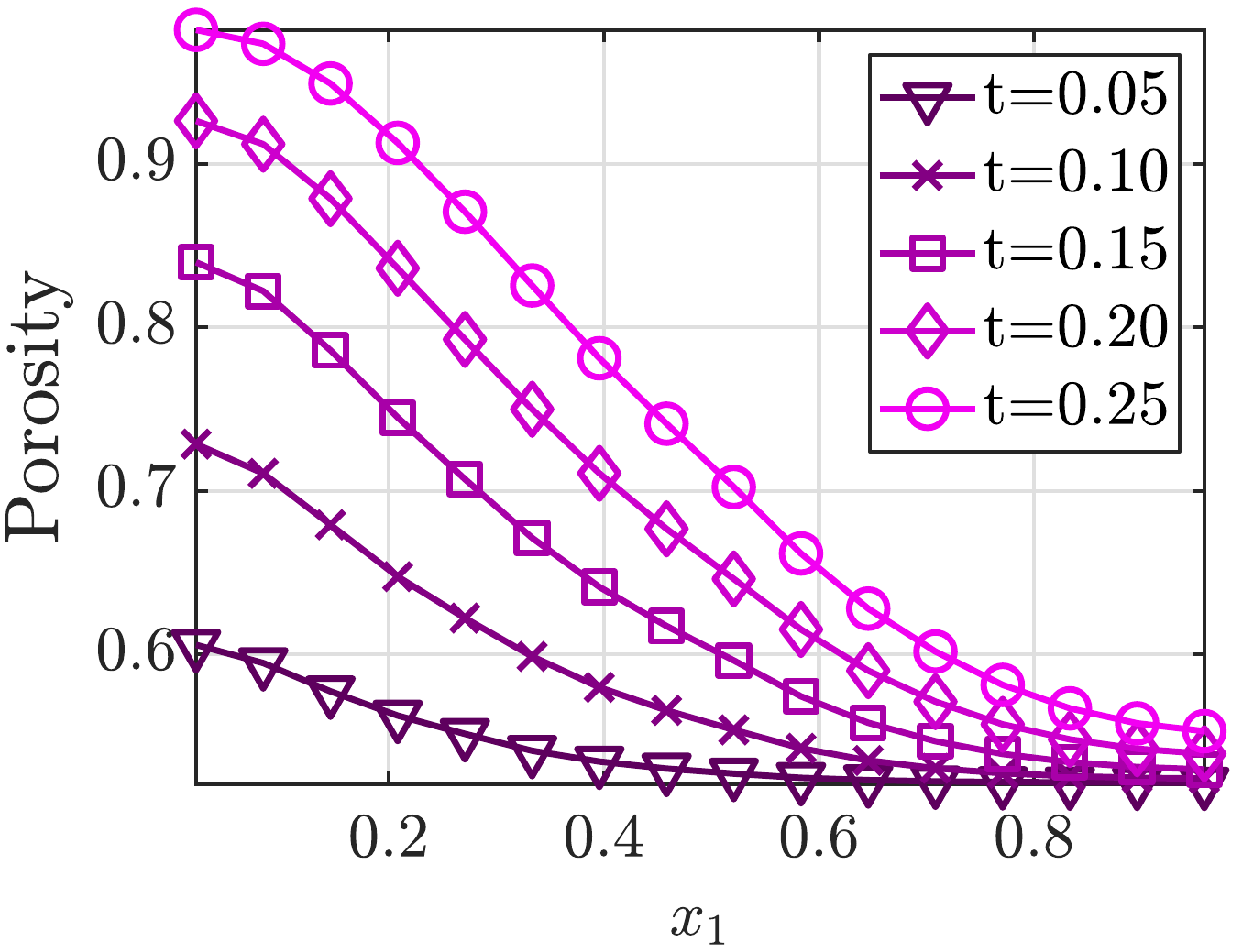}
	\captionof{figure}{The 1D projection of the concentration $u^n(\xnn)$, pressure $p(\xnn)$ and porosity $\bphi(\xnn)$ for five different times.}
	\label{fig:sol1D_Ex2}
\end{figure}

In this test case, the phase fields $\phi_l^0$ and $\phi_r^0$ are both asymmetric and for this reason, the expected results are anisotropic effective tensors. The non-diagonal components of $\aef$ and $\kef$ are however close to zero and can be neglected.
In \Cref{fig:effec1D2_Ex2} we display the diagonal components of both effective tensors. Notice the discontinuous behavior of the effective parameters as a result of the macro-scale heterogeneous distribution.

\begin{figure}[htpb!]
	\centering
	\includegraphics[width=0.4\textwidth]{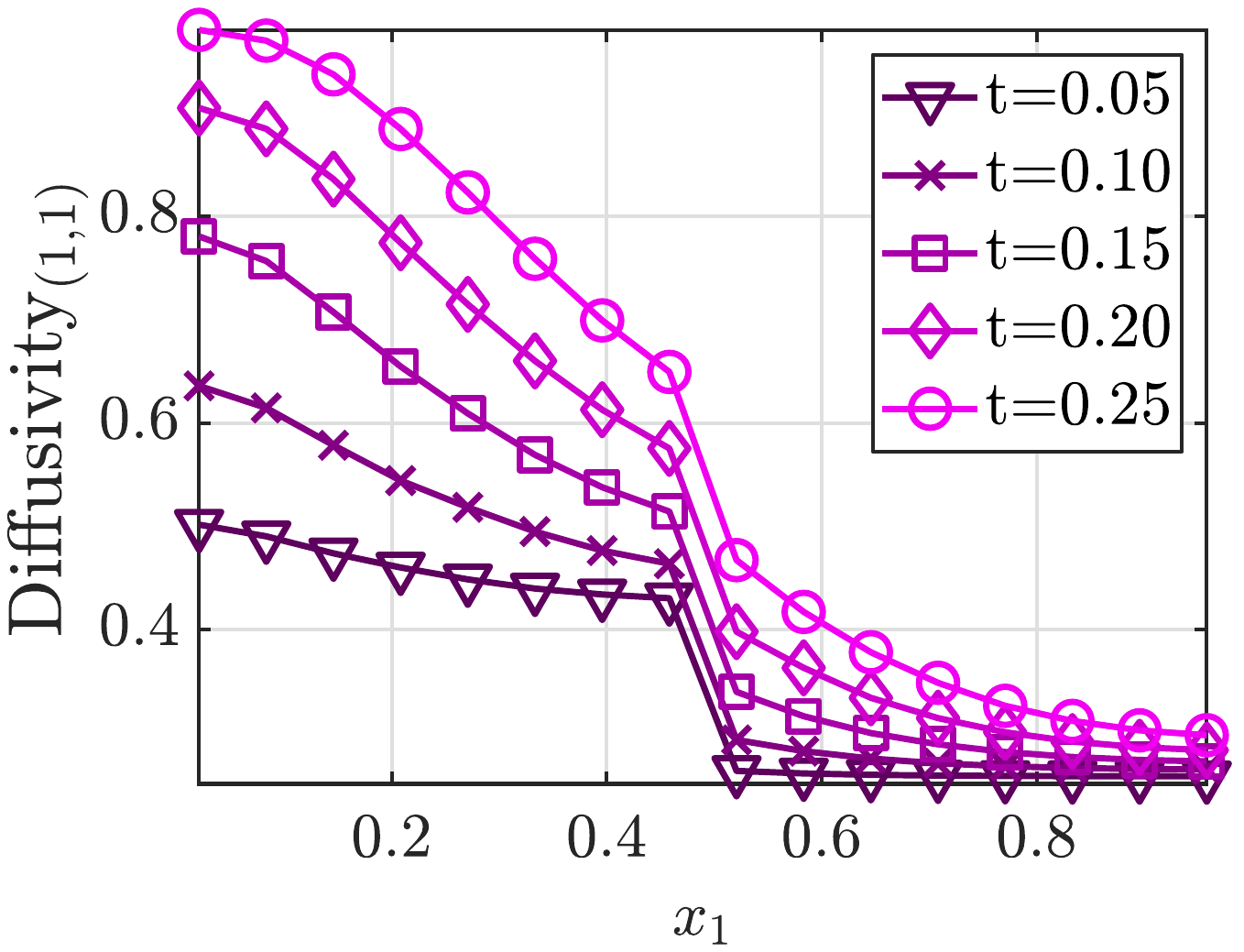}\hspace{0.5cm}
	\includegraphics[width=0.4\textwidth]{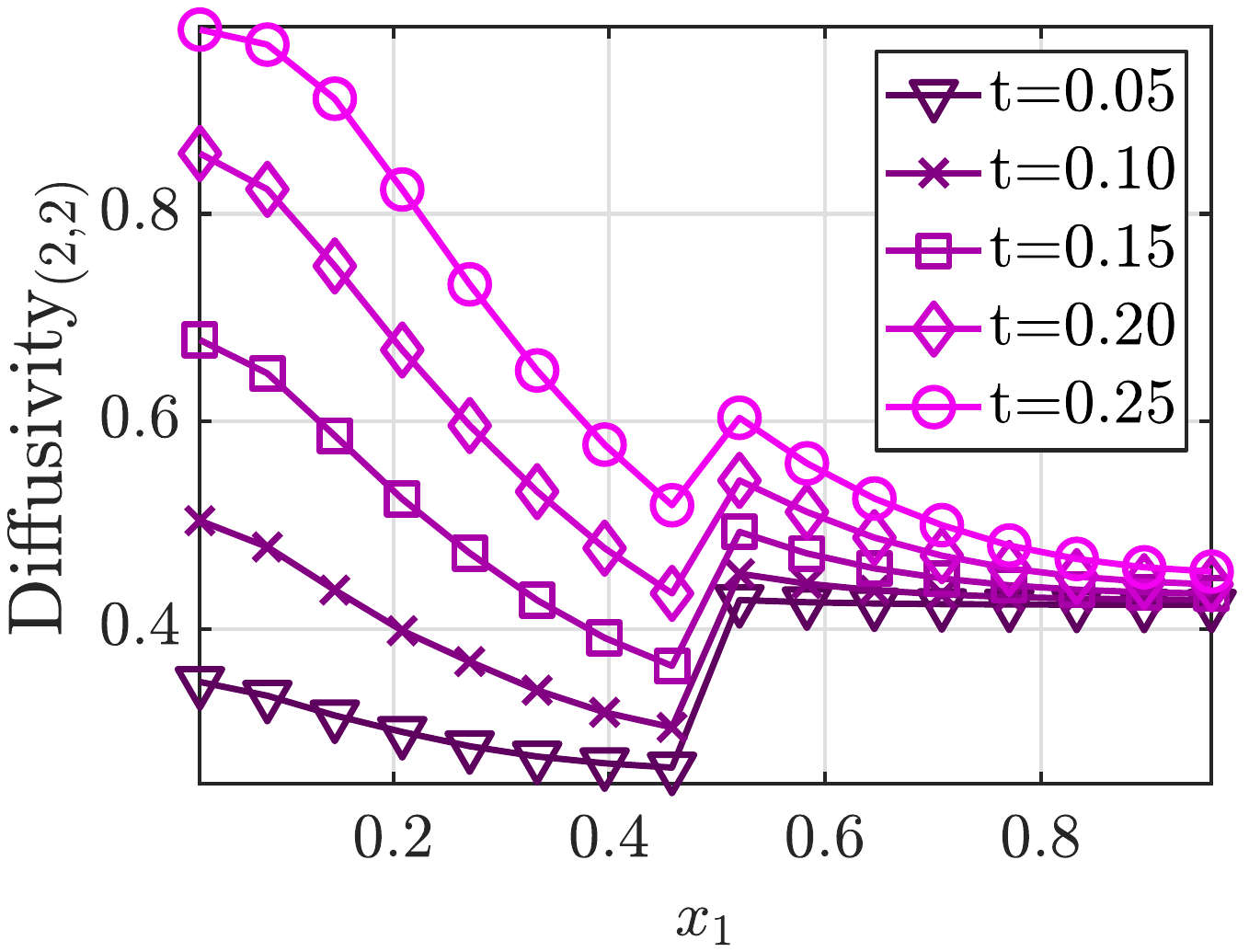}\\
	\includegraphics[width=0.4\textwidth]{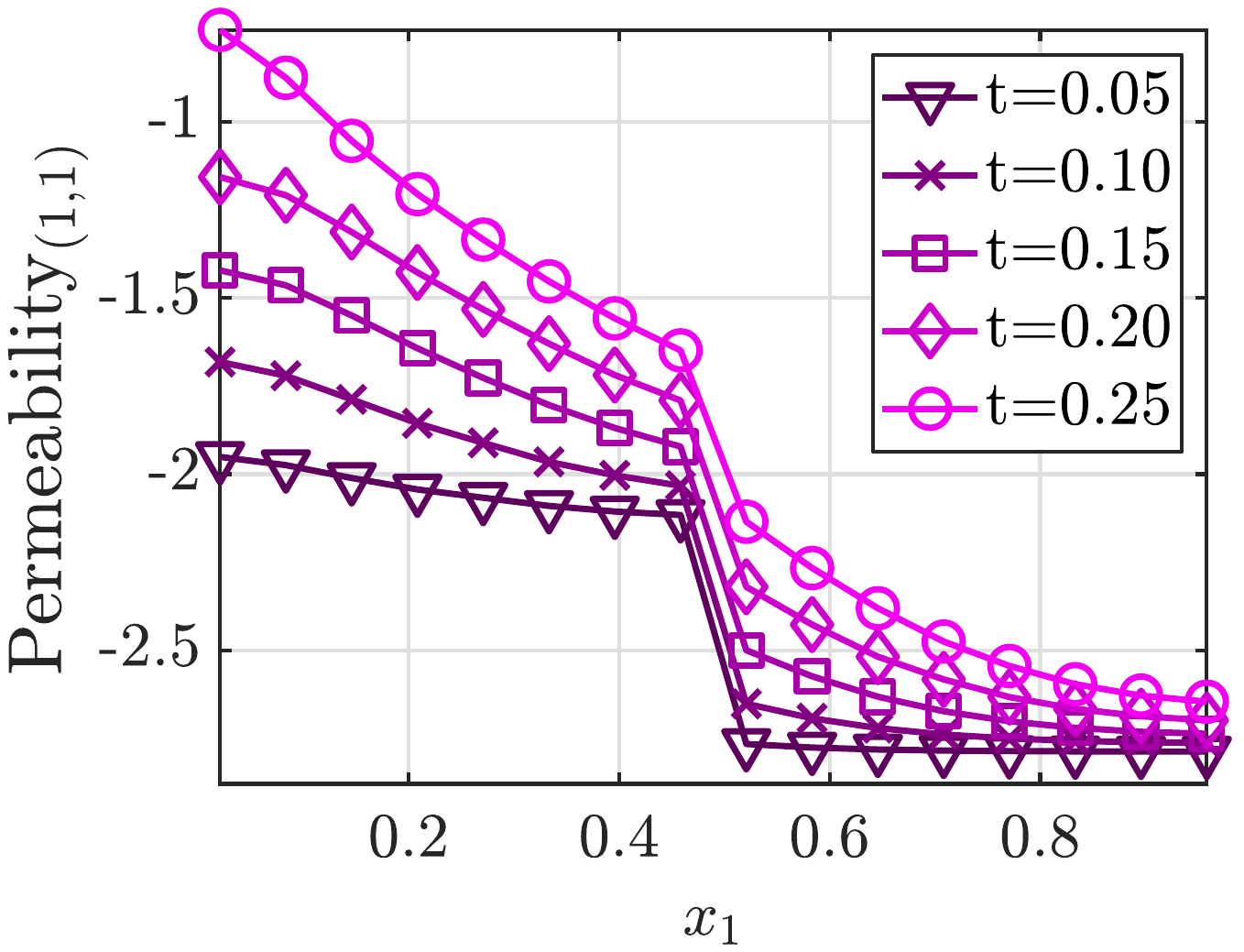}\hspace{0.5cm}
	\includegraphics[width=0.4\textwidth]{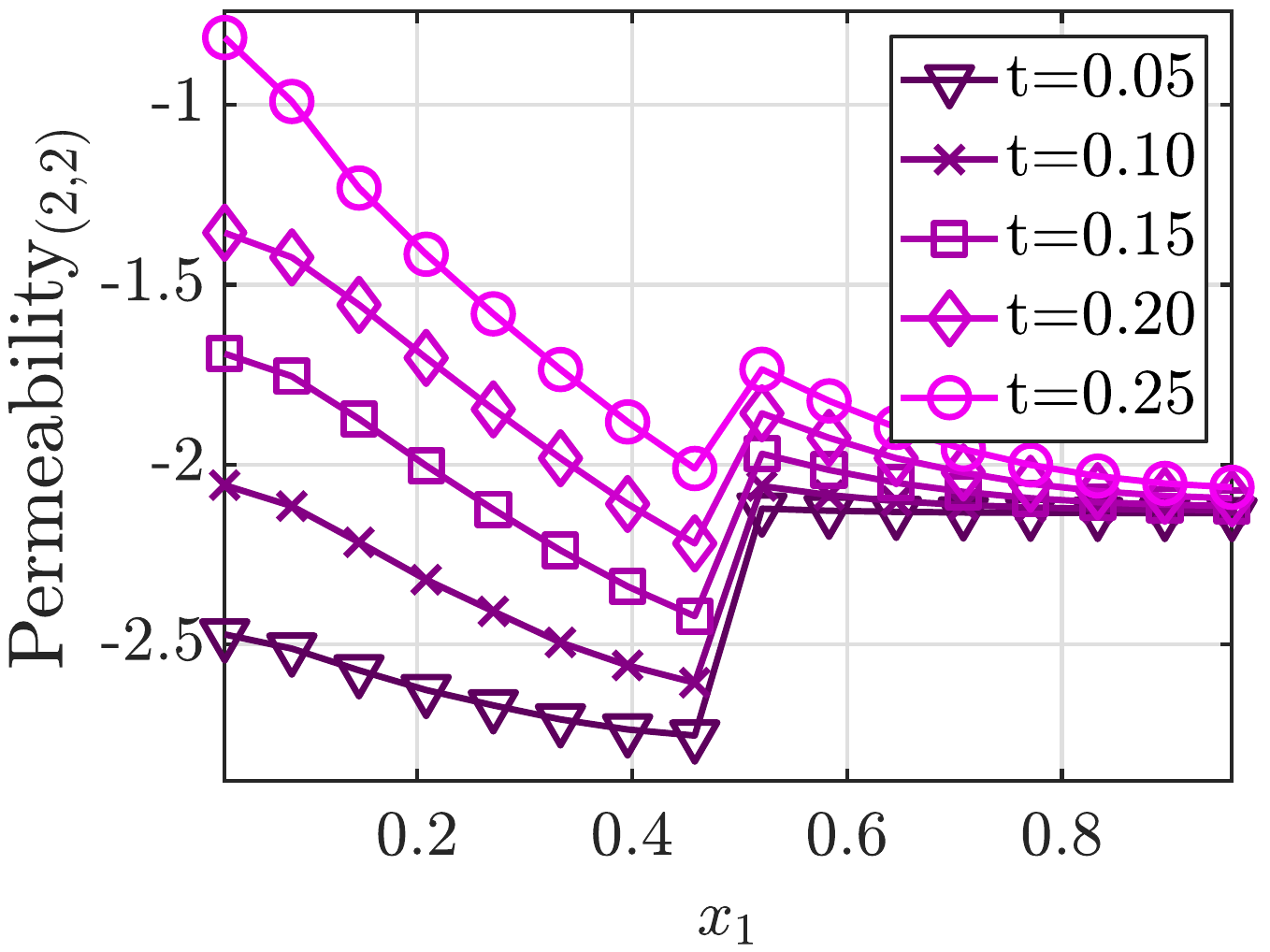}
	\captionof{figure}{The 1D projection of the diagonal components of effective diffusion tensor (top) and the effective permeability tensor ($Log_{10}$) (bottom) for five different times.}
	\label{fig:effec1D2_Ex2}
\end{figure}

In the 2D macro-scale domain we have $256$ elements. The porosity and the effective parameters must be updated only on the 32 elements located at the lowest part of the domain (1D projection) and copied (transferred in a sense explained in \Cref{sec:adaptivity}) over the whole 2D macro-scale domain. Following this, we obtain that the average number of degrees of freedom in both scales is $2.2$E+$5$ per time step. 
\begin{figure}[htpb!]
	\centering
	\includegraphics[width=0.45\textwidth]{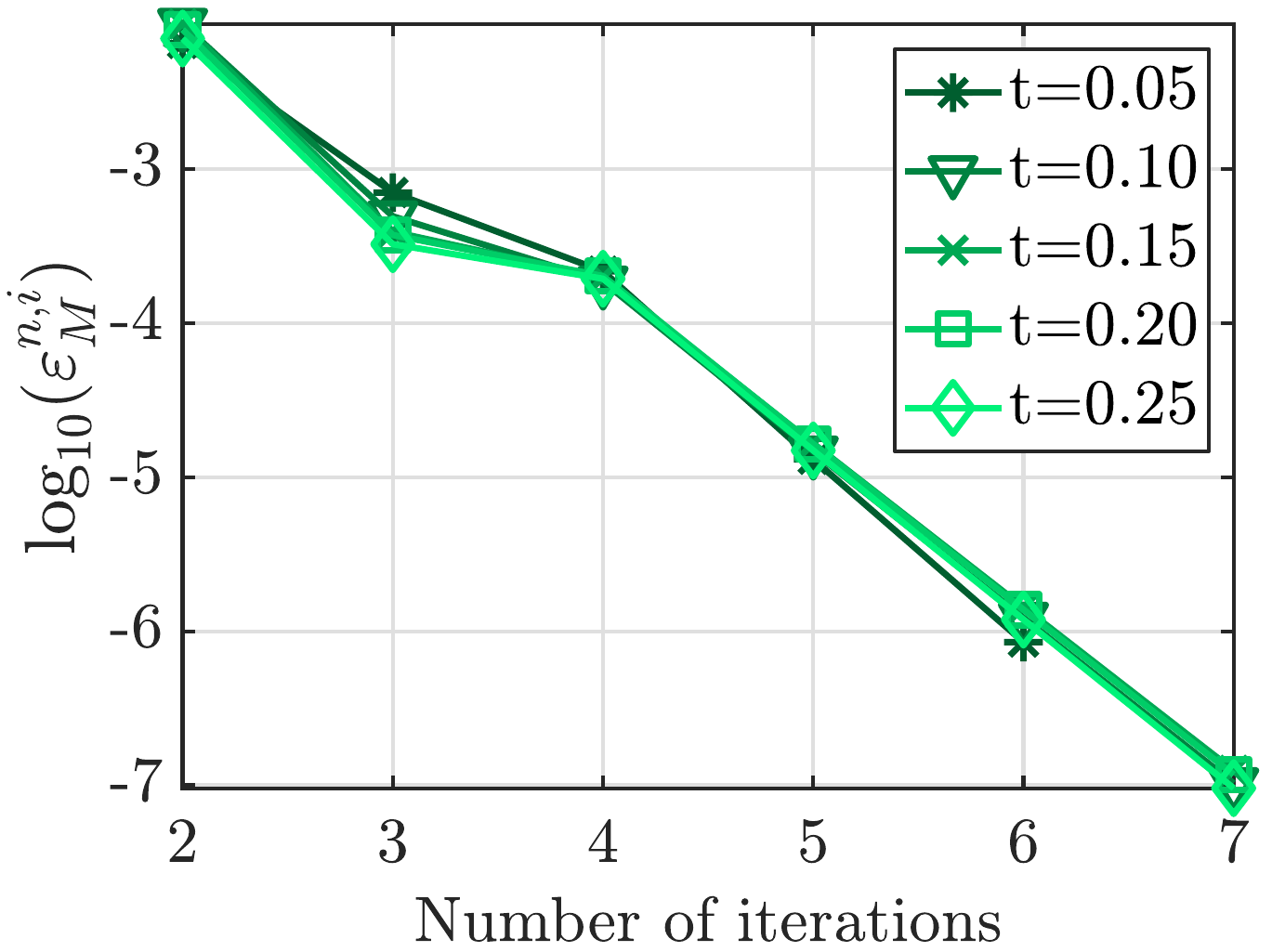}
	\captionof{figure}{The convergence of the multi-scale iterative scheme for five different times.}
	\label{fig:error_Ex2}
\end{figure}

Finally, in \Cref{fig:error_Ex2}, we show the convergence of $\epsilon_M^{n, i}$ at different times when the stopping criterion is $\textit{tol}_M=1$E-$6$. Notice that in this test case the total number of iterations remains constant in time and the convergence is shown to be linear. 

\section{Conclusions}
\label{sec:conclus}
We have presented a multi-scale iterative strategy that can be applied to models involving coupling of scales. In particular, we used this multi-scale iterative scheme to solve the two-scale phase-field model proposed in \cite{bringedal2019phase}. The resulting simulations show the influence of the micro-scale structural changes on the macro-scale parameters.

We calculate macro-scale quantities that are valid at the Darcy scale. Besides the macro-scale concentration and pressure, we calculate effective permeability, diffusivity, and porosity, which depend on the evolution of the phase field at the micro scale.
We have proven the convergence of the multi-scale non-linear iterative scheme and combined it with a robust micro-scale linearization strategy and adaptive strategies on both scales.
We use mesh refinement to reduce the numerical error in the solution of the phase-field evolution on the micro scale. For the macro scale, our adaptive strategy aims to localize where the effective parameters need to be recalculated. 
The multi-scale iterative scheme is shown to be convergent under some assumptions on the coupling parameter $\lstab$ and for sufficiently small time steps. However, the numerical examples show that the scheme converges under even milder restrictions on the coupling parameter $\lstab$ and the linearization parameter $\llin$.

Moreover, our numerical scheme can be parallelized. The local cell problems related to the micro scale are decoupled and can straightforwardly be solved in parallel.

It is relevant to mention that besides the theory considered in this paper, the applicability of this strategy is vast. Extensions of our adaptive algorithm, including more complex micro-scale models, are possible. The next research steps aim to prove the convergence of the full numerical scheme, including the error analysis of the micro-scale cell problems.

\subsection*{Acknowledgements}
The authors are supported by the Research Foundation - Flanders (FWO) through the Odysseus programme (Project G0G1316N). We thank Thomas Wick, Florin Adrian Radu, Kundan Kumar and Markus Gahn, who made valuable suggestions and contributed to the ideas behind this manuscript.

\bibliography{biblio_def}

\end{document}